\documentclass{amsart}
\date{December 6, 2014}
\usepackage{graphicx} 
\usepackage{verbatim,enumerate}
\title{%
    Intrinsic properties of surfaces with singularities
}
\author{M.~Hasegawa}
\author{A.~Honda}
\author{K.~Naokawa}
\author{K.~Saji}
\author{M.~Umehara}
\author{K.~Yamada}
\address[Masaru Hasegawa]{%
  Instituto de Ci\^encias Matem\'aticas e 
  de Computa\c{c}\~ao - USP,
  Avenida Trabalhador Sao-carlense, 400- Centro,
  CEP:13566-590 - S\~ao Carlos - SP, Brazil.
}
\email{mhasegawa@icmc.usp.br}

\dedicatory{To the memory of Professor Shoshichi Kobayashi}
\address[Atsufumi Honda]{
  Miyakonojo National College of Technology, 
  473-1, Yoshiocho, Miyakonojo, 
  Miyazaki 885-8567,
  Japan}
\email{atsufumi@cc.miyakonojo-nct.ac.jp}

\address[Kosuke Naokawa and Kentaro Saji]{%
  Department of Mathematics,
  Faculty of Science,
  Kobe University,
  Rokko, Kobe 657-8501, Japan}
\email{naokawa@port.kobe-u.ac.jp}
\email{saji@math.kobe-u.ac.jp}

\address[Masaaki Umehara]{
  Department of Mathematical and Computing Sciences,
  Tokyo Institute of Technology,
  2-12-1-W8-34, O-okayama Meguro-ku,
  Tokyo 152-8552, Japan}
\email{umehara@is.titech.ac.jp}
\address[Kotaro Yamada]{%
  Department of Mathematics,
  Tokyo Institute of Technology,
  O-okayama, Meguro, Tokyo 152-8551, Japan%
}
\email{kotaro@math.titech.ac.jp}

\subjclass[2010]{%
 Primary 57R45;   
 Secondary 53A05. 
}
\keywords{singularity, wave front, cross cap, intrinsic invariant, 
          the Gauss-Bonnet theorem}
\thanks{%
  The first author was supported by the 
  FAPESP post-doctoral grant number 2013/02543-1. 
  The third author was partly supported by 
  the Grant-in-Aid for JSPS Fellows.
  The fourth, fifth and sixth authors  were 
  partially supported by Grant-in-Aid for 
  Scientific Research (C) No.~26400087, 
  Scientific Research (A) No.~262457005, 
  and Scientific Research (C) No.~26400006,
  respectively, 
  from the Japan Society for the Promotion of Science.}
  


\numberwithin{equation}{section}
\newtheorem{Thm}{Theorem}[section]
\newtheorem{Fact}[Thm]{Fact}

\newtheorem{Cor}[Thm]{Corollary}
\newtheorem{Lemma}[Thm]{Lemma}
\newtheorem{Prop}[Thm]{Proposition}
\theoremstyle{definition}
\newtheorem{Def}[Thm]{Definition}
\newtheorem{Exa}[Thm]{Example}
\theoremstyle{remark}
\newtheorem{Rmk}[Thm]{Remark}

\newcommand{\vect}[1]{\boldsymbol{#1}}
\newcommand{\op}[1]{{\operatorname{#1}}}
\newcommand{\dy}{\displaystyle}

\newcommand{\inner}[2]{\left\langle{#1},{#2}\right\rangle}
\newcommand{\R}{{\vect{R}}}
\newcommand{\E}{\mathcal{E}}
\newcommand{\T}{\mathcal{T}}
\newcommand{\J}{\mathcal{J}}
\newcommand{\M}{\mathcal{M}}
\newcommand{\N}{\mathcal{N}}
\newcommand{\X}{\mathfrak{X}}
\renewcommand{\O}{\operatorname{O}}
\newcommand{\id}{\operatorname{id}}
\newcommand{\Hess}{\operatorname{Hess}}
\newcommand{\sgn}{\operatorname{sgn}}

\newcommand{\pmt}[1]{{\begin{pmatrix} #1  \end{pmatrix}}}

\renewcommand{\phi}{\varphi}
\renewcommand{\epsilon}{\varepsilon}
\begin{document}
\maketitle
\begin{abstract}
 In this paper, we give two classes of positive semi-definite
 metrics on 2-manifolds.
 The one is called a class of {\it Kossowski metrics\/} and the
 other is called a class of {\it Whitney metrics}:
 The pull-back metrics of wave fronts which admit
 only cuspidal edges and swallowtails in $\R^3$
 are Kossowski metrics, and
 the pull-back metrics of surfaces consisting
 only of cross cap singularities are
 Whitney  metrics.
 Since the singular sets of Kossowski metrics are 
 the union of regular curves
 on the domains of definitions,
 and Whitney metrics admit only isolated singularities,
 these two classes of metrics are disjoint.
 In this paper, we give several characterizations of
 intrinsic invariants of cuspidal edges and cross caps
 in these classes of metrics.
 Moreover, we prove Gauss-Bonnet type formulas
 for  Kossowski metrics and for
 Whitney metrics on compact 
 $2$-manifolds.
\end{abstract}
\section{Introduction}
\label{intro}
Let $U$ be a domain in the $uv$-plane
and $f:U\to \R^3$ a $C^\infty$-map.
A point $p\in U$ is called a {\it singular point\/}
if $f$ is not an immersion at $p$.
If $f$ does not admit any singular points
(i.e.\ $f$ is an immersion), $f$ is called 
a {\it regular surface}.
We fix such a regular surface $f$ and 
denote by
\begin{equation}\label{eq:first}
   ds^2=E\,du^2+2F\, du\,dv+G\, dv^2
\end{equation}
the {\em first fundamental form\/} 
(or the {\em induced metric}) of $f$, where
\begin{equation}
\label{eq:first2}
   E:=f_u\cdot f_u,\qquad F:=f_u\cdot f_v,\qquad
   G:=f_v\cdot f_v.
\end{equation}
Here, ``$\cdot$'' denotes the canonical inner product of $\R^3$.
We let $\nu$ be the unit normal vector field of $f$ and set
\[
    L:=f_{uu}\cdot \nu,\qquad M:=f_{uv}\cdot \nu,\qquad
    N:=f_{vv}\cdot \nu.
\]
Then the {\em second fundamental form} of $f$ is 
given by
\[
    h=L\,du^2+2M\, du\,dv+N\, dv^2.
\]
An invariant of regular surfaces
is called {\it intrinsic\/} if it can be
reformulated as an invariant of the first fundamental forms.
For example, the Gaussian curvature
$K:=(LN-M^2)/(EG-F^2)$ is an intrinsic
invariant, in fact, it coincides with
the sectional curvature of the metric $ds^2$
and has the expression 
\begin{multline}\label{eq:egregium}
  K=\frac{E\bigl(E_vG_v-2F_uG_v+(G_u)^2\bigr)}{4(EG-F^2)^2} \\
      +\frac{F(E_uG_v-E_vG_u-2E_vF_v-2F_uG_u+4F_uF_v)}{4(EG-F^2)^2}\\
      +\frac{G\bigl(E_uG_u-2E_uF_v+(E_v)^2\bigr)}{4(EG-F^2)^2}
             -\frac{E_{vv}-2F_{uv}+G_{uu}}{2(EG-F^2)}.
\end{multline}
On the other hand, 
an invariant of a $C^\infty$-map $f$ is 
called {\it extrinsic\/} if there exists another 
$C^\infty$-map $g$ whose induced metric coincides with
that of $f$ but the corresponding invariant of $g$
takes different values.
For example, the mean curvature $H$ of regular surfaces
is an extrinsic invariant, since we know that
a plane admits an isometric deformation varying $H$. 
By definition, extrinsic invariants cannot be
intrinsic invariants.
Moreover, the converse statement
is true for the real-analytic case: 

\begin{Prop}
 In the class of real analytic regular surfaces, 
 each non-extrinsic invariant is intrinsic. 
\end{Prop}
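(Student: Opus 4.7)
The plan is to reduce the statement to a local existence result for isometric embeddings. Suppose $I$ is an invariant of real-analytic regular surfaces which is not extrinsic. By the definitions recalled in the introduction, this means that whenever two real-analytic regular surfaces $f,g\colon U\to\R^3$ share the same first fundamental form, the values $I(f)$ and $I(g)$ coincide. To call $I$ intrinsic, we must exhibit it as a quantity $\tilde I(ds^2)$ assigned directly to the first fundamental form. The natural candidate is $\tilde I(ds^2):=I(f)$ for any real-analytic regular surface $f$ whose induced metric is $ds^2$; the non-extrinsic hypothesis guarantees this value is independent of the choice of $f$.

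The crux is then the realizability question: we need every admissible real-analytic metric to arise as the first fundamental form of some real-analytic regular surface, so that $\tilde I$ is defined on the full class of first fundamental forms. Since the notion of invariant is local, it suffices to work on arbitrarily small neighborhoods. Here I would invoke the classical \emph{Cartan--Janet theorem}, which asserts that every real-analytic Riemannian metric on a $2$-manifold admits, in a neighborhood of each point, a real-analytic isometric embedding into $\R^3$. Combined with the construction above, this yields a well-defined $\tilde I$ on all real-analytic positive-definite metric germs, and by construction $\tilde I(ds^2_f)=I(f)$ for every real-analytic regular surface $f$. That is exactly the assertion that $I$ is intrinsic.

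The main obstacle, as anticipated, is the realizability step, and it is the sole place where the real-analytic hypothesis is used. Cartan--Janet is known to be sharp in requiring analyticity: in the $C^\infty$ (or even finitely differentiable) category there are metrics on $2$-manifolds that do not admit any local isometric embedding into $\R^3$, so $\tilde I$ would not in general be defined on all metric germs, and the same argument would break down. Once Cartan--Janet is granted, however, the remainder of the proof is an essentially tautological unwinding of the definitions of \emph{extrinsic} and \emph{intrinsic}.
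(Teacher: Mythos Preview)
Your proposal is correct and follows essentially the same argument as the paper: both define $\tilde I(d\sigma^2):=I(f)$ for any real-analytic immersion $f$ realizing $d\sigma^2$, invoke the Janet--Cartan theorem to guarantee such an $f$ always exists, and use the non-extrinsic hypothesis to show the value is independent of the choice of $f$. Your additional remark on why the argument fails in the $C^\infty$ category is a nice complement.
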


\begin{proof}
 Let $C^\omega_0(\R^2,\R^3)$
 be the set of germs of real analytic immersions
 of $(\R^2,o)$ into $(\R^3,o)$, 
where $o$ is the origin.
 A map 
 $I:C^\omega_0(\R^2,\R^3)\to \R$
 is called an {\it invariant\/} if it does not depend on the 
 choice of a local coordinate
 system of regular surfaces
 and does not change values for
 any Euclidean motions in $\R^3$.
 We denote by 
 $\M^2_+$ 
 the set of germs of positive definite real analytic metrics
 defined at the origin $o\in \R^2$.
 For each $d\sigma^2\in \M^2_+$,
 the classical Janet-Cartan theorem
 (cf.\ \cite{Sp}) implies that 
 there exists a neighborhood $U$
 of the origin $o$ and a real analytic
 immersion
 $f:U\to \R^3$
 such that the pull-back of the
 canonical metric of $\R^3$ coincides
 with $d\sigma^2$. 
 Suppose that $I$ is not an extrinsic invariant.
 We denote by $I(f)$ the invariant of
 $f$ at $o$. Since $I$ is not extrinsic,
 the value $I(f)$ does not depend
 on the choice of such $f$. 
 So the map
 $\M^2_+\ni d\sigma^2 \mapsto I(f)\in \R$
 is well-defined, which means that
 $I(f)$ is an intrinsic invariant.
\end{proof}

In \cite{HHNUY} 
and \cite{MSUY}, several geometric invariants
of cross caps (see Section \ref{sec:W} 
for definition) 
and wave front singularities\footnote{The
definition of wave fronts is given in the appendix.}
were introduced,
and it was shown that some of them are 
actually {\it intrinsic\/} invariants, 
by
\begin{enumerate}
 \item[(i)] setting up a class of local coordinate
	    systems determined by 
	    the induced metrics (i.e.\ the first fundamental forms),
\item[(ii)] and giving formulas for the invariants
	    in terms of the coefficients of the first
	    fundamental forms with
	    respect to the above coordinate systems.
\end{enumerate}
Moreover, like as in the case of 
regular surfaces, one can expect the 
existence of a suitable class of positive 
semi-definite metrics for a given class of
singularities.
Fortunately, Kossowski \cite{K} defined a
class of positive semi-definite metrics
which characterizes the non-degenerate front singularities
(the non-degeneracy of singularities of fronts
is defined in the appendix).
In Section~\ref{sec:K}, we call metrics in such a class  
{\it Kossowski metrics}, and will describe the intrinsic 
invariants of cuspidal edges
(the definition of cuspidal edges is given in the
appendix) 
shown in \cite{MSUY} 
in this class of metrics (cf. Remark \ref{rmk:expression}).
Moreover, we show Gauss-Bonnet type formulas
for this class of metrics in Section~\ref{sec:GB}.

In Section~\ref{sec:W}, we introduce another new class of
positive semi-definite metrics
called {\it Whitney metrics},
which characterizes the cross cap singularities
in $\R^3$, and reformulate intrinsic invariants
of cross caps given in \cite{HHNUY} in terms of 
Whitney metrics.
Moreover, in Section~\ref{sec:WGB},
we prove a Gauss-Bonnet type formula
for this class of metrics.

\section{Kossowski metrics and cuspidal edges}
\label{sec:K}

In the  first part of this section, we 
introduce a class of positive semi-definite metrics 
describing the properties of wave fronts 
(see the appendix) intrinsically.
This class of metrics was defined by Kossowski \cite{K}.
For this purpose, we fix  a $2$-manifold $M^2$,
and a positive semi-definite metric $d\sigma^2$
on $M^2$.  A point $p\in M^2$ is called a 
{\it singular point\/} of 
the metric $d\sigma^2$ if the metric is not positive definite at $p$.
We denote by $\X$ the set of smooth vector fields
on $M^2$, and by $C^{\infty}(M^2)$ the set of $\R$-valued smooth functions on $M^2$.

We set
$\inner{X}{Y}:=d\sigma^2(X,Y)$.
Kossowski \cite{K} defined a map
$\Gamma:\X\times \X\times \X\to C^{\infty}(M^2)$
as 
\begin{multline}\label{eq:Gamma}
\Gamma(X,Y,Z):=\frac12
 \biggl(
 X\inner{Y}{Z}+Y\inner{X}{Z}-Z\inner{X}{Y}\\
 +\inner{[X,Y]}{Z}-\inner{[X,Z]}{Y}
 -\inner{[Y,Z]}{X}
 \biggr)
\end{multline}
and showed that it plays an important role
in giving an intrinsic characterization of 
generic wave fronts.
So, we call $\Gamma$, a
{\it Kossowski pseudo-connection}.

If the metric $d\sigma^2$ is positive definite, then  
\begin{equation}\label{eq:GN}
\Gamma(X,Y,Z)=\langle \nabla_XY,Z\rangle
\end{equation}
holds, where
$\nabla$ is the Levi-Civita connection of $d\sigma^2$.
One can easily check the following two identities (cf. \cite{K})
\begin{align}\label{eq:1}
&X\langle Y,Z\rangle=\Gamma(X,Y,Z)+\Gamma(X,Z,Y),\\
\label{eq:2}
&\Gamma(X,Y,Z)-\Gamma(Y,X,Z)=\langle [X,Y],Z\rangle.
\end{align}
The equation \eqref{eq:1} corresponds to the condition 
that  $\nabla$ is a metric connection, and
the equation \eqref{eq:2}
corresponds to the condition that
$\nabla$ is torsion free.
The following assertion can be also
easily verified: 

\begin{Prop}[Kossowski \cite{K}]\label{prop:K-conn}
 For each $Y\in \X$
 and for each $p\in M^2$,
 the map
 \[
     T_pM^2\times T_pM^2\ni (v_1,v_2)\longmapsto
           \Gamma(V_1,Y,V_2)(p)\in \R
 \]
is a well-defined bi-linear map, 
where $V_j$ $(j=1,2)$
are vector fields of $M^2$ 
satisfying $v_j=V_j(p)$.
\end{Prop}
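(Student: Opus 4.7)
The plan is to show that, for fixed $Y\in\X$, the map $(X,Z)\mapsto\Gamma(X,Y,Z)$ is $C^\infty(M^2)$-linear in each of the slots $X$ and $Z$. Once this tensorial property is established, the standard ``tensor characterization lemma'' (the pointwise value of a $C^\infty(M^2)$-linear map on vector fields depends only on the pointwise values of its arguments) immediately yields the well-definedness, and $\R$-bilinearity is then automatic from $C^\infty$-linearity in the two slots.

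First I would verify the $C^\infty$-linearity in the first slot. Given $f\in C^\infty(M^2)$, I expand $\Gamma(fX,Y,Z)$ using \eqref{eq:Gamma}. The four terms involving inner products of $fX$ with something are handled by the Leibniz rule for $Y$ and $Z$ applied to $f\inner{X}{Z}$ and $f\inner{X}{Y}$; this produces extra factors $(Yf)\inner{X}{Z}$ and $(Zf)\inner{X}{Y}$. The two bracket terms are handled with the identity $[fX,W]=f[X,W]-(Wf)X$, which produces exactly the extra terms $-(Yf)\inner{X}{Z}$ and $+(Zf)\inner{X}{Y}$ (and one extra factor $f$ in the clean term). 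Summing, the unwanted derivative-of-$f$ terms cancel in pairs, and one is left with $\Gamma(fX,Y,Z)=f\,\Gamma(X,Y,Z)$.

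Next I would do the analogous computation in the third slot, expanding $\Gamma(X,Y,fZ)$. Now the Leibniz rule on $X\inner{Y}{fZ}$ and $Y\inner{X}{fZ}$ produces $(Xf)\inner{Y}{Z}$ and $(Yf)\inner{X}{Z}$, while the identity $[W,fZ]=f[W,Z]+(Wf)Z$ applied to the two remaining bracket terms produces cancelling $-(Xf)\inner{Y}{Z}$ and $-(Yf)\inner{X}{Z}$, leaving $\Gamma(X,Y,fZ)=f\,\Gamma(X,Y,Z)$. ($\R$-linearity in both slots is obvious from \eqref{eq:Gamma} since every term is $\R$-linear.)

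To finish, fix $p\in M^2$ and suppose $V_1,\tilde V_1\in\X$ satisfy $V_1(p)=\tilde V_1(p)=v_1$. Writing $V_1-\tilde V_1=\sum_i f_i W_i$ in a local frame $\{W_i\}$ around $p$ with $f_i(p)=0$, the $C^\infty$-linearity just proved gives
\[
 \Gamma(V_1-\tilde V_1,Y,V_2)(p)=\sum_i f_i(p)\,\Gamma(W_i,Y,V_2)(p)=0,
\]
and similarly for $V_2$; extending $v_1,v_2$ locally by constant-coefficient combinations of coordinate vector fields shows the value at $p$ depends only on $(v_1,v_2)$. Bilinearity of the resulting map on $T_pM^2\times T_pM^2$ follows from the $\R$-linearity in the first and third slots of $\Gamma$.

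The only subtlety is making sure the extraneous derivative-of-$f$ terms really cancel; this is purely a bookkeeping exercise using the two defining identities for the Lie bracket on $fX$ and $fZ$. Once that calculation is carried through, the tensorial reduction to a pointwise bilinear map is routine.
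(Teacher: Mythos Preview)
Your argument is correct: the Koszul-type expansion \eqref{eq:Gamma} is $C^\infty(M^2)$-linear in the first and third slots, and the derivative-of-$f$ terms cancel exactly as you describe, so the tensor characterization lemma gives the pointwise bilinear map. The paper does not supply a proof (it merely asserts the result ``can be easily verified'' and cites Kossowski), so your write-up is consistent with, and fills in, what the paper leaves implicit.
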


For each $p\in M^2$, the subspace
\begin{equation}\label{eq:N}
 \N_p:=\biggl\{v\in T_pM^2\,;\, d\sigma^2(v,w)=0
\mbox{ for all $w\in T_pM^2$}
\biggr\}
\end{equation}
is called the {\it null space} or the {\it radical} at $p$.
A non-zero vector which belongs to $\N_p$ is called
a {\it null vector\/} at $p$.

\begin{Lemma}[Kossowski \cite{K}]\label{lem:K-conn}
 Let $p$ be a singular point of $d\sigma^2$.
 Then the Kossowski pseudo-connection $\Gamma$
 induces a tri-linear map
 \[
    \hat \Gamma_p:T_pM^2\times T_pM^2\times \N_p
            \ni (v_1,v_2,v_3) \longmapsto
          \Gamma(V_1,V_2,V_3)(p)\in \R,
 \]
where
$V_j$ $(j=1,2,3)$ are vector fields of $M^2$
such that $v_j=V_j(p)$.

\end{Lemma}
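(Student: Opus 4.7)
The plan is to show that $\Gamma$ is tensorial (i.e.\ $C^\infty(M^2)$-linear) in each of its three slots, with the caveat that tensoriality in the middle slot holds only when the third slot lies in the radical at $p$. Combined with the obvious additivity of $\Gamma$ in each argument, this forces $\Gamma(V_1,V_2,V_3)(p)$ to depend only on the values $V_j(p)$ whenever $V_3(p)\in\N_p$, which is exactly the content of the lemma.

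Unconditional tensoriality in the first and third slots is routine. Inserting $fX$ (respectively $fZ$) into \eqref{eq:Gamma} and using $[fX,Y]=f[X,Y]-Y(f)X$ and $[fX,Z]=f[X,Z]-Z(f)X$ (respectively the analogous identities for $fZ$), the $Y(f)$- and $Z(f)$-terms produced by the Leibniz rule on the $\langle\,,\,\rangle$-terms and the corresponding commutator brackets exactly cancel. This yields $\Gamma(fX,Y,Z)=f\,\Gamma(X,Y,Z)$ and $\Gamma(X,Y,fZ)=f\,\Gamma(X,Y,Z)$. This is essentially the computation that already underlies Proposition~\ref{prop:K-conn}, phrased now with a variable third argument.

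The main obstacle is the middle slot, where $\Gamma$ fails to be tensorial in general. A direct expansion gives
\[
\Gamma(X,fY,Z)=f\,\Gamma(X,Y,Z)+X(f)\,\langle Y,Z\rangle,
\]
where the $X(f)$-contributions from $X\langle fY,Z\rangle$ and $\langle[X,fY],Z\rangle$ reinforce, while the $Z(f)$-contributions from $-Z\langle X,fY\rangle$ and $-\langle[fY,Z],X\rangle$ cancel. The null hypothesis is exactly what is needed to kill the obstruction: if $Z(p)\in\N_p$, then $\langle Y,Z\rangle(p)=d\sigma^2_p(Y(p),Z(p))=0$, so evaluating at $p$ gives $\Gamma(X,fY,Z)(p)=f(p)\,\Gamma(X,Y,Z)(p)$.

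With these three linearities at hand, the lemma follows by the standard frame argument. Fix a local frame $\{W_1,W_2\}$ near $p$, expand $V_j=\sum_k a_j^k W_k$ with $a_j^k\in C^\infty$, and apply multilinearity to rewrite
\[
\Gamma(V_1,V_2,V_3)(p)=\sum_{i_1,i_2,i_3} a_1^{i_1}(p)\,a_2^{i_2}(p)\,a_3^{i_3}(p)\,\Gamma(W_{i_1},W_{i_2},W_{i_3})(p),
\]
the middle-slot tensoriality being legitimate here because $V_3(p)\in\N_p$. The right-hand side depends only on the coordinates of $v_1,v_2,v_3$ in the frame, so $\hat\Gamma_p$ is a well-defined trilinear map on $T_pM^2\times T_pM^2\times\N_p$, as desired.
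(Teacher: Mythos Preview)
Your proof is correct and follows essentially the same route as the paper's. The paper's proof computes only the middle-slot identity $\Gamma(V_1,fV_2,V_3)(p)=f(p)\,\Gamma(V_1,V_2,V_3)(p)$ when $V_3(p)\in\N_p$ (tacitly relying on Proposition~\ref{prop:K-conn} for the first and third slots), arriving at exactly your obstruction term $2(V_1f)\langle V_2,V_3\rangle$ and killing it with the null hypothesis; your version spells out the unconditional tensoriality in the outer slots and the concluding frame argument more explicitly, but the substance is identical.
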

\begin{proof}
 Applying \eqref{eq:Gamma}, 
 \begin{align*}
  & 2\Gamma(V_1,fV_2,V_3) \\
  &\phantom{aaa}=
  V_1\langle fV_2,V_3\rangle+
fV_2\langle V_1,V_3\rangle-V_3\langle V_1,fV_2
\rangle\\
  &\phantom{aaaaaaaaa}+
\langle [V_1,fV_2],V_3\rangle-\langle [V_1,V_3],fV_2
\rangle
  -\langle [fV_2,V_3],V_1\rangle \\
  &\phantom{aaa}=
  2f\Gamma(V_1,V_2,V_3)
  +(V_1f) \langle V_2,V_3\rangle-
  (V_3f)\langle V_1,V_2\rangle \\
  &\phantom{aaaaaaaaaaaaaaaaaaaaaaaaa}+
(V_1f)
\langle V_2,V_3\rangle +(V_3f) \langle V_2,V_1\rangle\\
  &\phantom{aaa}=
  2f\Gamma(V_1,V_2,V_3)
    +2(V_1f) \langle V_2,V_3\rangle= 2f\Gamma(V_1,V_2,V_3)
 \end{align*}
holds at $p$,
 where the fact that $V_3(p)\in \N_p$ 
is used to show the last equality.
\end{proof}
\begin{Def}\label{def:adms}
 A singular point $p$ of the metric $d\sigma^2$
 is called 
 {\it admissible\footnote{
 The admissibility was originally introduced by 
 Kossowski \cite{K}. He called it
 $d(\langle,\rangle)$-{\it flatness}.
}%
\/} 
 if $\hat \Gamma_p$ in Lemma~\ref{lem:K-conn} vanishes.
 If each singular point of $d\sigma^2$
 is admissible, then $d\sigma^2$ is called
 an {\it admissible metric}.
\end{Def}

We are interested in admissible metrics
because of the following fact.
\begin{Prop}\label{prop:exa-adms}
 Let $f:M^2\to \R^3$ be a $C^\infty$-map.
 Then the induced metric $d\sigma^2(:=df\cdot df)$ 
by $f$ on $M^2$
 is an admissible metric.
\end{Prop}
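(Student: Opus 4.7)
My plan is to express the Kossowski pseudo-connection of the pulled-back metric in a clean extrinsic form and then read off the vanishing on null vectors directly.

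First I would observe that at a singular point $p$, a null vector $v\in\N_p$ satisfies $0=d\sigma^2(v,v)=|df_p(v)|^2$, so $\N_p=\ker df_p$. Thus the task reduces to showing that $\Gamma(V_1,V_2,V_3)(p)=0$ whenever $df_p(V_3(p))=0$.

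Next I would establish the identification
\[
 \Gamma(X,Y,Z)=\bigl(D_X(df(Y))\bigr)\cdot df(Z),
\]
where $D$ denotes the flat directional derivative in $\R^3$ (so that $D_X(df(Y))$ is just the $\R^3$-valued function $X(df(Y))$). The proof is a direct expansion: using $\langle X,Y\rangle=df(X)\cdot df(Y)$, the Leibniz rule gives
\[
 X\langle Y,Z\rangle = (D_X df(Y))\cdot df(Z)+df(Y)\cdot(D_X df(Z)),
\]
and similarly for the cyclic permutations; combined with the torsion-free identity $D_X df(Y)-D_Y df(X)=df([X,Y])$ (which is immediate in coordinates), all terms in the defining formula \eqref{eq:Gamma} telescope, leaving the claimed formula. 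This is the computational core; it is a routine but not trivial rewriting, so the main care is in bookkeeping signs and cancellations.

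Finally, once the formula is in hand, the admissibility is immediate: if $V_3(p)\in\N_p$, then $df_p(V_3(p))=0$, and evaluating at $p$ yields
\[
 \Gamma(V_1,V_2,V_3)(p)=\bigl(D_{V_1}(df(V_2))\bigr)(p)\cdot df_p(V_3(p))=0.
\]
By Lemma \ref{lem:K-conn} this vanishing descends to $\hat\Gamma_p$, so every singular point of $d\sigma^2$ is admissible in the sense of Definition \ref{def:adms}. The only potentially delicate point is justifying the identification of $\Gamma$ with $(D_X df(Y))\cdot df(Z)$, since the defining expression \eqref{eq:Gamma} is manifestly tensorial only after we invoke Proposition \ref{prop:K-conn} and Lemma \ref{lem:K-conn}; but the above expression is manifestly tensorial in $Z$ (and, at points where $df(Z)=0$, also in $X$ and $Y$), which is exactly what is needed here.
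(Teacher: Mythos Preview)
Your proof is correct and follows essentially the same approach as the paper: the paper's proof simply asserts the identity $\Gamma(X,Y,Z)=D_X df(Y)\cdot df(Z)$ (citing \eqref{eq:GN}) and observes it vanishes when $df(Z_p)=0$. You fill in the verification of that identity via the Leibniz rule and the torsion-free relation $D_X df(Y)-D_Y df(X)=df([X,Y])$, but the argument is the same in substance.
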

\begin{proof}
 Let $D$ be the Levi-Civita connection of the
 canonical metric of $\R^3$.
Then the Kossowski pseudo-connection of
$d\sigma^2$ is given by  (cf. \eqref{eq:GN})
 \[
    \Gamma(X,Y,Z)=D_X df(Y)\cdot df(Z),
 \]
which vanishes if $df(Z_p)=0$, proving the
 assertion.
\end{proof}

A singular point of the metric $d\sigma^2$
is called of {\it rank one} if $\N_p$
is a $1$-dimensional subspace of $T_pM^2$.
\begin{Def}\label{def:adms-coord}
 Let $d\sigma^2$ be a positive semi-definite metric on $M^2$.
 A local coordinate system
 $(U; u,v)$ of $M^2$
 is called {\it adjusted\/} at a singular point $p\in U$ 
 if  
 \[
     \partial_v:=\partial/ \partial v
 \]
 belongs to $\N_p$.
 Moreover, if $(U; u,v)$ is adjusted at each singular point
 of $U$, it is called an {\it adapted local coordinate system\/} of 
 $M^2$.
\end{Def}

By a suitable affine transformation
in the $uv$-plane,
one can take a local coordinate system
which is adjusted at a given rank one singular point $p$.

\begin{Lemma}\label{lem:u2}
 Let $(\xi,\eta)$ and $(u,v)$ be two local coordinate 
 systems centered at a rank one singular point $p$.
 Suppose that $(u,v)$ is adjusted at $p=(0,0)$.  
 Then $(\xi,\eta)$ 
 is also adjusted at $p$
 if and only if 
 \begin{equation}\label{eq:u2}
  u_\eta(0,0)=0
 \end{equation}
 holds. 
\end{Lemma}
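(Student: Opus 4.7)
The plan is to translate the membership condition $\partial_\eta|_p \in \N_p$ into a linear relation expressed in the $(u,v)$-basis and then use the rank-one hypothesis to isolate the coefficient that must vanish. First I would apply the chain rule to write
\[
  \partial_\eta = u_\eta\,\partial_u + v_\eta\,\partial_v
\]
and evaluate at $p=(0,0)$. Thus $\partial_\eta|_p = u_\eta(0,0)\,\partial_u|_p + v_\eta(0,0)\,\partial_v|_p$, and the question reduces to asking when this particular linear combination lies in $\N_p$.

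Next I would invoke the assumption that $p$ is a rank one singular point, so that $\N_p$ is a $1$-dimensional subspace of $T_pM^2$. Since $(u,v)$ is adjusted at $p$, we have $\partial_v|_p \in \N_p$, and $\partial_v|_p$ is nonzero, so it spans $\N_p$; equivalently, $\N_p = \R\,\partial_v|_p$. Because $(\partial_u|_p,\partial_v|_p)$ is a basis of $T_pM^2$, the vector $\partial_u|_p$ does not belong to $\N_p$. Hence a linear combination $a\,\partial_u|_p+b\,\partial_v|_p$ lies in $\N_p$ if and only if $a=0$.

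Applying this with $a=u_\eta(0,0)$ and $b=v_\eta(0,0)$ yields the equivalence: $\partial_\eta|_p\in \N_p$ if and only if $u_\eta(0,0)=0$. This is exactly the claim, since $(\xi,\eta)$ being adjusted at $p$ is by Definition~\ref{def:adms-coord} the statement $\partial_\eta|_p\in\N_p$.

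The proof is essentially a linear-algebra observation, so I don't anticipate a serious obstacle; the only subtlety to flag is that the rank-one hypothesis is genuinely used (without it, $\N_p$ could be $2$-dimensional and every vector, including $\partial_\eta|_p$, would trivially lie in $\N_p$). It is also worth noting for consistency that if $u_\eta(0,0)=0$, then the non-degeneracy of the Jacobian of the change of coordinates at $p$ forces $v_\eta(0,0)\ne 0$, so $\partial_\eta|_p$ is indeed a nonzero null vector spanning $\N_p$.
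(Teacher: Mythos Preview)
Your proof is correct and follows the same approach as the paper: write $\partial_\eta = u_\eta\,\partial_u + v_\eta\,\partial_v$ via the chain rule and use that $\N_p$ is spanned by $\partial_v|_p$ to conclude that $\partial_\eta|_p\in\N_p$ iff $u_\eta(0,0)=0$. You have simply made explicit the role of the rank-one hypothesis that the paper's two-line argument leaves implicit.
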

\begin{proof}
 It holds that
 $\partial_\eta=u_\eta \partial_u+v_\eta \partial_v$.
 If $\partial_v\in \N_p$,
 then  $\partial_\eta\in \N_p$
 if and only if $u_\eta$ vanishes at $p$.
\end{proof}

The following assertion gives a characterization
of admissible singularities:
\begin{Prop}\label{prop:property}
 Let $(u,v)$ be a local coordinate system
 centered at a rank one singular point $p$.
 If $p$ is admissible and $(u,v)$ is adjusted at $p$, 
 then 
 \begin{equation}\label{eq:admissible}
   F=G=0,\quad E_v = 2 F_u, \quad G_u=G_v=0 
 \end{equation}
 hold at $p=(0,0)$, where $d\sigma^2=E\,du^2+2F\,du\,dv+G\,dv^2$. 
 Conversely, if there exists a local coordinate system $(u,v)$
 centered at $p$ satisfying \eqref{eq:admissible}, then
 $p$ is an admissible singular point, and
 $(u,v)$ is adjusted at $p$.
\end{Prop}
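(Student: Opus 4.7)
The plan is to reduce everything to an explicit formula for $\Gamma$ on the coordinate frame $\{\partial_u,\partial_v\}$. The key observation is that $[\partial_u,\partial_v]=0$, so \eqref{eq:Gamma} collapses to the classical Christoffel symbol of the first kind,
\[
\Gamma(\partial_i,\partial_j,\partial_k)=\tfrac12\bigl(\partial_i\langle\partial_j,\partial_k\rangle+\partial_j\langle\partial_i,\partial_k\rangle-\partial_k\langle\partial_i,\partial_j\rangle\bigr).
\]

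For the forward direction, suppose $p$ is admissible and $(u,v)$ is adjusted at $p$. Then $\partial_v(p)\in \N_p$ already gives $F(p)=\langle\partial_u,\partial_v\rangle(p)=0$ and $G(p)=\langle\partial_v,\partial_v\rangle(p)=0$. By Lemma~\ref{lem:K-conn}, admissibility is equivalent to $\hat\Gamma_p\equiv 0$; since $p$ has rank one, $\N_p=\R\,\partial_v(p)$, and by tri-linearity of $\hat\Gamma_p$ it suffices to test with the third argument equal to $\partial_v$ and the first two drawn from $\{\partial_u,\partial_v\}$. The Christoffel formula above yields at $p$
\[
\Gamma(\partial_u,\partial_u,\partial_v)=F_u-\tfrac12 E_v,\qquad \Gamma(\partial_u,\partial_v,\partial_v)=\tfrac12 G_u,\qquad \Gamma(\partial_v,\partial_v,\partial_v)=\tfrac12 G_v,
\]
while the fourth case $\Gamma(\partial_v,\partial_u,\partial_v)$ coincides with the second thanks to \eqref{eq:2} and $[\partial_u,\partial_v]=0$. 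Setting these three values to zero gives exactly $E_v=2F_u$, $G_u=0$, $G_v=0$ at $p$, which together with $F(p)=G(p)=0$ is \eqref{eq:admissible}.

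For the converse, suppose \eqref{eq:admissible} holds at $p$. The equalities $F(p)=G(p)=0$ force $\partial_v(p)\in\N_p$, so $(u,v)$ is adjusted at $p$. The remaining three equalities make each of the three Christoffel values displayed above vanish at $p$, so by tri-linearity of $\hat\Gamma_p$ (together with $\N_p=\R\,\partial_v(p)$) we conclude $\hat\Gamma_p\equiv 0$, i.e.\ $p$ is admissible.

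No serious obstacle is anticipated: the proof is a short explicit calculation, once one recognizes that the rank-one hypothesis plus adjustedness reduce the admissibility condition to three scalar equations, and that $[\partial_u,\partial_v]=0$ makes $\Gamma$ on the coordinate frame coincide with the usual Christoffel symbols of the first kind. The only mild subtlety is tracking that the fourth combination of arguments does not give an independent condition, which is settled by \eqref{eq:2}.
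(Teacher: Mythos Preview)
Your proof is correct and follows essentially the same approach as the paper: both compute the three independent values $\Gamma(\partial_u,\partial_u,\partial_v)$, $\Gamma(\partial_u,\partial_v,\partial_v)$, $\Gamma(\partial_v,\partial_v,\partial_v)$ using $[\partial_u,\partial_v]=0$ and read off the equivalence. You are slightly more explicit than the paper in noting (via \eqref{eq:2}) that the fourth combination $\Gamma(\partial_v,\partial_u,\partial_v)$ is redundant and in invoking tri-linearity to pass from the coordinate frame to all of $T_pM^2\times T_pM^2\times\N_p$, but this is exactly the content the paper leaves implicit.
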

\begin{proof}
 Since $[\partial_u,\partial_v]$ vanishes,
 and $\partial_v\in \N_p$ at $p$,
 the formula \eqref{eq:Gamma} yields that
 \begin{align*}
  &2\hat\Gamma(\partial_u,\partial_u,\partial_v)
  =2\partial_u\langle \partial_u,\partial_v \rangle-
  \partial_v\langle \partial_u,\partial_u \rangle=2F_u-E_v,\\
  &2\hat\Gamma(\partial_u,\partial_v,\partial_v)
  =\partial_u\langle \partial_v,\partial_v \rangle+
  \partial_v\langle \partial_u,\partial_v \rangle
  - \partial_v\langle \partial_u,\partial_v \rangle=
  \partial_u\langle \partial_v,\partial_v \rangle
  =G_u,\\
  &2\hat\Gamma(\partial_v,\partial_v,\partial_v)
  =\partial_v\langle \partial_v,\partial_v \rangle=G_v
 \end{align*}
 hold at the origin $(0,0)$.
 Thus $\hat \Gamma_p$ vanishes if
 and only if
 \eqref{eq:admissible} holds at $p$.
\end{proof}

In this section, we are interested in the case that
the set of singular points (called the {\em singular set})
consists of a regular curve on the domain.
The following assertion plays an important role
in the latter discussions.

\begin{Cor}\label{cor:property}
 Let $(u,v)$ be a local coordinate system of
 $M^2$ such that the $u$-axis is a singular set
 and $\partial_v$ is a null vector along the $u$-axis.
 Then all points of the $u$-axis are admissible
 singular points if and only if
 \begin{equation}\label{eq:admissible2}
  E_v=G_v=0
 \end{equation}
 holds on the $u$-axis.
\end{Cor}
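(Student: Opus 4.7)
The plan is to apply Proposition~\ref{prop:property} pointwise at every point $p=(u_0,0)$ on the $u$-axis, exploiting the fact that here we have more than just $\partial_v \in \N_p$ at a single point: we are told $\partial_v$ is null along the \emph{entire} $u$-axis. In particular the local coordinate system $(u,v)$ is adjusted at every point of the $u$-axis, so Proposition~\ref{prop:property} is directly applicable at each such point.

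First I would observe that since $\partial_v$ is a null vector at each point of the $u$-axis, the functions
\[
  F(u,0)=\inner{\partial_u}{\partial_v}\big|_{v=0}=0,\qquad
  G(u,0)=\inner{\partial_v}{\partial_v}\big|_{v=0}=0
\]
vanish identically on the $u$-axis. Differentiating these identities in the $u$-direction (which is tangent to the $u$-axis) immediately gives
\[
  F_u(u,0)=0,\qquad G_u(u,0)=0
\]
for all $u$. Hence, of the five conditions listed in \eqref{eq:admissible} at a point $p=(u_0,0)$, the three conditions $F=G=0$ and $G_u=0$ hold automatically, and the condition $E_v=2F_u$ reduces to $E_v=0$ since $F_u=0$ on the $u$-axis.

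Therefore, the full list in \eqref{eq:admissible} collapses to precisely the two conditions $E_v=0$ and $G_v=0$ at $p$. By Proposition~\ref{prop:property}, these are equivalent to the admissibility of $p$. Running this equivalence over all $p$ on the $u$-axis yields \eqref{eq:admissible2} as the necessary and sufficient condition for every point of the $u$-axis to be admissible.

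There is no real obstacle here; the only subtle step is noticing that the hypothesis makes $F$ and $G$ vanish on a whole curve (not just at a single point), which is what lets us kill $F_u$ and $G_u$ by tangential differentiation and thus strip the admissibility conditions down to just those involving $v$-derivatives.
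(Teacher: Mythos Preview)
Your proof is correct and follows essentially the same approach as the paper: use nullity of $\partial_v$ along the $u$-axis to get $F(u,0)=G(u,0)=0$, differentiate in $u$ to obtain $F_u(u,0)=G_u(u,0)=0$, and then observe that \eqref{eq:admissible} reduces to \eqref{eq:admissible2}.
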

\begin{proof}
 Since  $\partial_v$ is a null-vector field along
 the $u$-axis, we have that
 $F(u,0)=G(u,0)=0$.
 Differentiating it with respect to $u$, we get
 $F_u(u,0)=G_u(u,0)=0$.
 Then the assertion follows from \eqref{eq:admissible}.
\end{proof}

\begin{Def}\label{def:frontal}
 An  admissible metric $d\sigma^2$ defined on $M^2$
 is called a {\it frontal metric\/} if for each local coordinate
 system $(U;u,v)$, there exists a $C^\infty$-function
 $\lambda(u,v)$ on $U$ such that
 \begin{equation}\label{eq:lambda}
  EG-F^2=\lambda^2,
 \end{equation}
 where
 $d\sigma^2=E\,du^2+2F\,du\,dv+G\,dv^2$
 is a local expression of the metric $d\sigma^2$ on $U$.
\end{Def}

The following assertion is the reason of
the naming of frontal metrics.

\begin{Prop}\label{prop:frontal}
Let $f:M^2\to \R^3$ be a frontal
$($see the appendix for the definition of frontals$)$.
 Then the induced metric $d\sigma^2(:=df\cdot df)$ 
on $M^2$  is a frontal  metric.
\end{Prop}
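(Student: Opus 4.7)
The plan is to exhibit the required function $\lambda$ in terms of the unit normal $\nu$ that is guaranteed by the hypothesis that $f$ is a frontal. Recall from the appendix that being a frontal means there is a globally defined $C^\infty$ unit normal vector field $\nu:M^2 \to S^2 \subset \R^3$ satisfying $\nu \cdot f_u = \nu \cdot f_v = 0$ in any local coordinate system. Admissibility of $d\sigma^2 = df \cdot df$ is already covered by Proposition~\ref{prop:exa-adms}, so the only thing left is to produce the function $\lambda$ satisfying \eqref{eq:lambda} on each coordinate chart.

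The natural candidate is
\[
   \lambda := (f_u \times f_v) \cdot \nu,
\]
which is manifestly a smooth function on $U$. The key step is to show that $f_u \times f_v = \lambda\,\nu$ holds identically on $U$, not just at regular points. At a regular point, $\{f_u, f_v\}$ spans a $2$-plane in $\R^3$, and both $f_u \times f_v$ and $\nu$ lie in its $1$-dimensional orthogonal complement; hence they are parallel, and since $\nu$ is unit we obtain $f_u \times f_v = ((f_u \times f_v)\cdot \nu)\,\nu = \lambda\,\nu$. At a singular point, $f_u$ and $f_v$ are linearly dependent, so $f_u \times f_v = 0$, and simultaneously $\lambda = (f_u \times f_v)\cdot \nu = 0$, so the identity $f_u \times f_v = \lambda\,\nu$ holds trivially. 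Thus the identity holds on all of $U$.

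Taking squared norms of both sides yields
\[
   EG - F^2 = |f_u|^2|f_v|^2 - (f_u\cdot f_v)^2
            = |f_u \times f_v|^2 = \lambda^2|\nu|^2 = \lambda^2,
\]
which is exactly the condition \eqref{eq:lambda}. Combined with admissibility, this shows that $d\sigma^2$ is a frontal metric in the sense of Definition~\ref{def:frontal}.

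There is no real obstacle here; the mild point to be careful about is the behavior at singular points, where one must verify that the smooth candidate $\lambda$ still satisfies $f_u \times f_v = \lambda\,\nu$ even though the two vectors no longer determine their parallel relationship from orthogonality to the tangent plane. This is handled by the observation that both sides simply vanish at such points. The argument is coordinate-dependent in appearance but the existence of $\lambda$ on each chart is all that Definition~\ref{def:frontal} demands.
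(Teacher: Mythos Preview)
Your proof is correct and takes essentially the same approach as the paper: your $\lambda = (f_u \times f_v)\cdot\nu$ is exactly the paper's $\lambda = \det(f_u,f_v,\nu)$ via the scalar triple product, and your argument through $f_u\times f_v = \lambda\nu$ simply unpacks the Lagrange identity that the paper invokes in one line. One small slip worth fixing: a frontal only guarantees a \emph{locally} defined unit normal $\nu$ (global existence is co-orientability), but since Definition~\ref{def:frontal} only asks for $\lambda$ on each chart, this does not affect your argument.
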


\begin{proof}
Let $(U;u,v)$ be a sufficiently small
local coordinate neighborhood of $M^2$. 
We can take
a smooth unit normal vector field
$\nu$ of $f$ on $U$.
Let $E,F,G$ be
the coefficients of the first fundamental form 
as in \eqref{eq:first2}.
Then it holds that
$$
EG-F^2=\lambda^2,
\qquad \lambda:=\det(f_u,f_v,\nu),
$$
which proves the assertion.
\end{proof}

From now on, we fix a frontal metric $d\sigma^2$ on $M^2$. 
We set
\begin{equation}\label{eq:dA}
 d\hat A:=\lambda\, du\wedge dv,
\end{equation}
where $\lambda$ is the function given in \eqref{eq:lambda}.
If one can choose the function
$\lambda$ for each local coordinate system $(U;u,v)$ 
so that $d\hat A$ is a
smooth $2$-form on $M^2$, 
the frontal metric $d\sigma^2$ is called 
{\it co-orientable}.
In this case, we call 
$d\hat A$ the {\it signed area element\/} 
associated to $d\sigma^2$.

On the other hand, suppose $M^2$ is
oriented, and $(u,v)$ is a local
coordinate system which is
compatible with respect to the 
orientation.
Then the form
\begin{equation}\label{eq:dA0}
    dA:=\sqrt{EG-F^2}\,du\wedge dv=|\lambda|\,du\wedge dv
\end{equation}
does not depend on the choice of $(u,v)$
and gives a 
continuous $2$-form on $M^2$.
The existence of $dA$ is equivalent to the orientability of $M^2$.
We call $dA$ the (un-signed) {\it area element\/} 
associated to $d\sigma^2$.
The area element $dA$ vanishes at 
the singular set $\Sigma$ of
$d\sigma^2$, and is not differentiable on $\Sigma$ in general.

If $M^2$ is simply connected, all frontal metrics
on $M^2$ are orientable and co-orientable.
The co-orientability of frontal metrics
is related to that of
frontals in $\R^3$ as follows
(the definition of frontals are given in the appendix):

\begin{Prop}\label{prop:co-o}
Let $f:M^2\to \R^3$ be a frontal.
Then the induced metric $d\sigma^2(:=df\cdot df)$
is co-orientable if and only if
so is $f$ $($the co-orientability of $f$
is defined in the appendix$)$.
\end{Prop}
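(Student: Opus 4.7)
The plan is to translate both notions of co-orientability into statements about the locally defined function $\lambda := \det(f_u, f_v, \nu)$ attached to a choice of unit normal $\nu$.

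For the ``only if'' direction, suppose $f$ admits a global smooth unit normal $\nu$. Define $\lambda := \det(f_u, f_v, \nu)$ in every chart $(U; u, v)$; by the computation in the proof of Proposition~\ref{prop:frontal}, $\lambda^{2} = EG - F^{2}$. Under a coordinate change $(u, v) \to (\xi, \eta)$ with Jacobian $J = \partial(u, v)/\partial(\xi, \eta)$, multilinearity of the determinant gives $\det(f_\xi, f_\eta, \nu) = J \lambda$, while simultaneously $du \wedge dv = J\, d\xi \wedge d\eta$. Hence $\lambda\, du \wedge dv$ is coordinate-independent and assembles into a global smooth $2$-form $d\hat A$, exhibiting $d\sigma^{2}$ as co-orientable.

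For the converse, suppose $d\hat A$ is a global signed area element of $d\sigma^{2}$, written in each chart as $\tilde\lambda\, du \wedge dv$ with $\tilde\lambda^{2} = EG - F^{2}$. Cover $M^{2}$ by open sets $\{U_\alpha\}$ each supporting a smooth unit normal $\nu_\alpha$ of $f|_{U_\alpha}$ (possible since $f$ is a frontal), and set $\lambda_\alpha := \det(f_{u_\alpha}, f_{v_\alpha}, \nu_\alpha)$, so $\lambda_\alpha^{2} = \tilde\lambda_\alpha^{2}$. Both sides are smooth, so on the open set where $\lambda_\alpha \neq 0$ the ratio $\tilde\lambda_\alpha / \lambda_\alpha$ takes the locally constant values $\pm 1$. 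Shrinking $U_\alpha$ so that its regular set is connected and dense, this ratio is a single constant $\sigma_\alpha \in \{\pm 1\}$, and by continuity $\tilde\lambda_\alpha = \sigma_\alpha \lambda_\alpha$ on all of $U_\alpha$. Replacing $\nu_\alpha$ with $\sigma_\alpha \nu_\alpha$, we normalize so that $\det(f_{u_\alpha}, f_{v_\alpha}, \nu_\alpha) = \tilde\lambda_\alpha$ on $U_\alpha$.

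It remains to check the $\nu_\alpha$ agree on overlaps. On $U_\alpha \cap U_\beta$, the global identity $\tilde\lambda_\alpha\, du_\alpha \wedge dv_\alpha = \tilde\lambda_\beta\, du_\beta \wedge dv_\beta$ yields $\tilde\lambda_\beta = J \tilde\lambda_\alpha$, where $J$ is the Jacobian of the coordinate change. The same multilinearity argument applied with $\nu = \nu_\beta$ gives $\tilde\lambda_\beta = J \det(f_{u_\alpha}, f_{v_\alpha}, \nu_\beta)$, so $\det(f_{u_\alpha}, f_{v_\alpha}, \nu_\beta) = \tilde\lambda_\alpha = \det(f_{u_\alpha}, f_{v_\alpha}, \nu_\alpha)$. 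At every point where $\tilde\lambda_\alpha \neq 0$ this forces $\nu_\alpha = \nu_\beta$, and by continuity the equality extends to the whole overlap. Hence the $\nu_\alpha$ patch together to a global smooth unit normal, proving $f$ co-orientable. The main technical obstacle will be the extraction of the constant sign $\sigma_\alpha$ in the second step: a priori the identification $\tilde\lambda_\alpha = \pm \lambda_\alpha$ could change across the singular set, and it is essential to shrink $U_\alpha$ so that its regular set is connected and dense, so that the sign determined on that open dense piece extends uniquely by continuity.
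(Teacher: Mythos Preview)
Your approach is the same as the paper's: in one direction, build $d\hat A$ from a global $\nu$ via $\lambda=\det(f_u,f_v,\nu)$; in the other, take local unit normals $\nu_\alpha$, adjust their signs so that $\det(f_{u_\alpha},f_{v_\alpha},\nu_\alpha)$ matches the coefficient $\tilde\lambda_\alpha$ of the given $d\hat A$, and then verify agreement on overlaps by the chain rule. (Your ``if'' and ``only if'' labels are interchanged, but the mathematical content lines up with the paper.)

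One point deserves more care. You claim $U_\alpha$ can be shrunk so that its regular set is \emph{connected} and dense, in order to pin down the single sign $\sigma_\alpha$. Near a point lying on a singular curve this is impossible: the curve separates every small disk into two half-disks, so your argument as written does not exclude $\tilde\lambda_\alpha/\lambda_\alpha$ taking opposite signs on the two sides. The paper's proof does not address this either---it simply asserts that, after replacing $\nu_\alpha$ by $-\nu_\alpha$ if necessary, one obtains $\det(f_{u_\alpha},f_{v_\alpha},\nu_\alpha)=\tilde\lambda_\alpha$ on all of $U_\alpha$. In the non-degenerate setting that the paper actually cares about (Kossowski metrics, where $d\lambda\neq 0$ along the singular set) this is fine: two smooth functions on a connected open set with equal squares and nonvanishing differentials along their common zero set must agree up to a global sign. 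For a completely general frontal the step is more delicate, but your proof is at the same level of rigor as the paper's on this point.
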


\begin{proof}
Suppose that $f$ is co-orientable.
Then we can take a 
unit normal vector field $\nu$ of $f$
defined on $M^2$.
Let $(U;u,v)$ be a local coordinate system.
By setting
$\lambda:=\det(f_u,f_v,\nu)$, \eqref{eq:lambda} holds. 
Moreover, the $2$-form given in
\eqref{eq:dA} is defined on $M^2$.
So $d\sigma^2$ is co-orientable.

We next assume that $d\sigma^2$ is
co-orientable. 
We can take 
an atlas $\{(U_\alpha;u_\alpha,v_\alpha)\}_{\alpha\in \Lambda}$ of the manifold $M^2$
so that there exists a function 
$\lambda_\alpha$ defined on $U_\alpha$
satisfying \eqref{eq:lambda}.
Here, there is a $\pm$-ambiguity of
the sign of the function $\lambda_\alpha$
on each local coordinate $(U_\alpha;u_\alpha,v_\alpha)$.
Since $d\sigma^2$ is co-orientable,
we can fix a signed area
element $d\hat A$ defined on $M^2$, and
each $\lambda_\alpha$ can be uniquely chosen 
so that 
\begin{equation}\label{eq:star}
\lambda_\alpha du_\alpha\wedge dv_\alpha=d\hat A
\end{equation}
holds. 
We may assume that each $U_\alpha$ is simply connected.
Since $f$ is a frontal (see the  appendix),
there exists  a unit normal 
vector field $\nu_\alpha$ 
defined on $U_\alpha$.
Since \eqref{eq:lambda} holds for each
$\lambda=\lambda_\alpha$ on $U_\alpha$,
replacing $\nu_\alpha$ by $-\nu_\alpha$
if necessary, 
we can choose $\nu_\alpha$ satisfying
$
d\hat A=\det(f_{u_\alpha},f_{v_\alpha},\nu_\alpha)du_\alpha\wedge dv_\alpha
$
on $U_\alpha$.
Suppose that $U_\alpha\cap U_\beta$
($\alpha,\beta\in \Lambda$)
is not empty.
Using the chain rule, it can be easily checked that
$$
d\hat A=\det(f_{u_\beta},f_{v_\beta},\nu_\beta)du_\beta\wedge dv_\beta
=\det(f_{u_\alpha},f_{v_\alpha},\nu_\beta)du_\alpha\wedge dv_\alpha
$$
holds on $U_\alpha\cap U_\beta$, 
and so $\nu_\alpha$ coincides with $\nu_\beta$
on $U_\alpha\cap U_\beta$. Thus, there exists a
smooth unit normal vector field $\nu$ on $M^2$ 
satisfying $\nu:=\nu_\alpha$ on each $U_\alpha$.
Therefore, $f$ is co-orientable.
\end{proof}

\begin{Def}\label{def:a2a3}
 A singular point $p$ of a given frontal metric
 is called {\it non-degenerate\/} if its exterior 
derivative
 \[
    d\lambda:=\lambda_u du+\lambda_v dv 
 \]
does not vanish at $p$, 
where $\lambda$ is the function as in
 \eqref{eq:lambda}.
 A frontal metric $d\sigma^2$
 is called a {\it Kossowski metric\/} if
 all of the singular points of the metric are
 non-degenerate.
\end{Def}
As shown in \cite{K}, it holds that
\begin{Lemma}\label{lem:rankone}
 All singular points of a Kossowski metric are of rank $1$.
\end{Lemma}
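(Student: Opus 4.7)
The plan is to rule out the possibility that $\dim \N_p = 2$ at a singular point $p$, by showing that it forces $d\lambda(p)=0$, contradicting non-degeneracy. Fix local coordinates $(u,v)$ centered at such a $p$ and suppose for contradiction that the metric vanishes entirely at $p$, so that $E(p)=F(p)=G(p)=0$, where $d\sigma^2 = E\,du^2 + 2F\,du\,dv + G\,dv^2$.

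First I would argue that all first-order partial derivatives of $E$, $F$, $G$ vanish at $p$. Since $d\sigma^2$ is positive semi-definite, $E\ge 0$ and $G\ge 0$ on a neighborhood of $p$, so $p$ is a local minimum for both $E$ and $G$; hence $E_u$, $E_v$, $G_u$, $G_v$ all vanish at $p$. For $F$, positive semi-definiteness forces $F^2\le EG$ pointwise; combining this with the bound $E, G = O(u^2+v^2)$ coming from their vanishing $1$-jets yields $|F(u,v)|^2 \le C(u^2+v^2)^2$ for some constant $C>0$, which implies $F(u,v) = o(\sqrt{u^2+v^2})$ and therefore $F_u(p)=F_v(p)=0$.

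Since $E$, $F$, $G$ now vanish to order at least $2$ at $p$, the combination $EG-F^2$ vanishes to order at least $4$. In particular, $\lambda^2(u,v) = O((u^2+v^2)^2)$, so $|\lambda(u,v)| \le C'(u^2+v^2)$ for some $C'>0$, which in turn gives $\lambda(p)=0$ together with $\lambda_u(p)=\lambda_v(p)=0$ by taking difference quotients along the coordinate axes. Thus $d\lambda(p)=0$, contradicting the non-degeneracy of $p$. Since $p$ is a singular point and $\dim\N_p\ne 2$, we conclude $\dim\N_p = 1$, as required.

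The delicate step is the $F$-estimate: the inequality $F^2 \le EG$ is immediate from positive semi-definiteness, but extracting a first-order vanishing statement for $F$ from the second-order vanishing of $E$ and $G$ requires the square-root argument above. Once this is in place, the rest of the proof is a direct consequence of $\lambda$ being a \emph{smooth} square root of $EG-F^2$ and of the definition of non-degeneracy for a Kossowski metric.
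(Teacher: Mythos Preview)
Your argument is correct. The paper itself does not supply a proof of this lemma; it simply attributes the result to Kossowski \cite{K}. Your contradiction argument is self-contained and in fact uses only positive semi-definiteness together with the frontal hypothesis (the existence of a smooth $\lambda$ with $\lambda^2=EG-F^2$) and non-degeneracy; admissibility is never invoked. One cosmetic remark: your estimate $|F|^2\le C(u^2+v^2)^2$ actually gives $F=O(u^2+v^2)$, which is stronger than the $o(\sqrt{u^2+v^2})$ you record, though either suffices to kill the first derivatives of $F$ at $p$.
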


The following assertion gives
the compatibility between
non-degeneracy of frontal metrics
and that of frontals in $\R^3$.

\begin{Prop}\label{prop:non-deg}
Let $f:M^2\to \R^3$ be a frontal.
Then the singular set of $f$
coincides with that of
the induced metric $d\sigma^2(:=df\cdot df)$.
Moreover, a singular point $p$ of $f$
is non-degenerate as a frontal singularity
$($see the appendix$)$ if and only if
$p$ is a non-degenerate singular point 
of $d\sigma^2$. 
\end{Prop}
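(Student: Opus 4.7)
The plan is to verify the two claims—equality of singular sets and equivalence of non-degeneracies—independently, each by a short local coordinate computation that appeals back to Proposition~\ref{prop:frontal}.

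For the first claim, I fix an arbitrary local coordinate system $(u,v)$ centered at $p$ and note that the Gram matrix $\pmt{E&F\\F&G}$ of $d\sigma^2$ is positive semi-definite everywhere, with determinant $EG-F^2=|f_u\times f_v|^2$. Thus $d\sigma^2$ fails to be positive definite at $p$ exactly when $f_u(p)$ and $f_v(p)$ are linearly dependent, i.e.\ when $df_p$ has rank at most one. This is the definition of $p$ being a singular point of $f$, so the two singular sets coincide pointwise; in particular, any smooth square root $\lambda$ of $EG-F^2$ vanishes exactly on the common singular set.

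For the second claim, the idea is to use a distinguished smooth square root of $EG-F^2$ coming from the frontal structure. On a neighborhood $U$ of $p$ admitting a smooth unit normal $\nu$, set $\lambda_f:=\det(f_u,f_v,\nu)$; as in the proof of Proposition~\ref{prop:frontal}, this smooth function satisfies $\lambda_f^2=EG-F^2$, so it is a legitimate choice of the function $\lambda$ appearing in \eqref{eq:lambda}. By Definition~\ref{def:a2a3}, non-degeneracy of $p$ as a singular point of $d\sigma^2$ then reads $d\lambda_f(p)\neq 0$, which is exactly the standard definition of non-degeneracy for the frontal singularity given in the appendix. Hence the two notions coincide.

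The only subtle point to check is that this equivalence does not depend on the choices implicit on each side: the choice of unit normal $\nu$ (unique up to sign where $f$ admits one, and existing locally by the frontal hypothesis) and the choice of smooth square root $\lambda$ on the metric side. Both ambiguities simply replace $\lambda_f$ by $-\lambda_f$, and the condition $d\lambda_f(p)\neq 0$ is invariant under this sign flip, so the non-degeneracy condition is well-posed on either side. I expect no genuine obstacle beyond this sign bookkeeping.
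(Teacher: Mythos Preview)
Your argument is correct and is precisely what the paper's one-line proof (``Compare Definition~\ref{def:a2a3} and the corresponding definition in the appendix'') is gesturing at: the single function $\lambda_f=\det(f_u,f_v,\nu)$ simultaneously serves as the signed area density of the appendix and as a smooth square root in~\eqref{eq:lambda}, so the two non-degeneracy conditions are literally the same inequality $d\lambda_f(p)\neq 0$. Your explicit treatment of the singular-set equality and the sign ambiguity just fills in the details the paper leaves implicit.
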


\begin{proof}
Compare Definition \ref{def:a2a3}
and the corresponding definition in
the appendix.
\end{proof}

Kossowski \cite{K} proved the following assertion.
(For the sake of reader's convenience, we give the
proof as follows.)

\begin{Thm}[Kossowski \cite{K}]\label{Thm:K1}
 Let $d\sigma^2$ be a co-orientable Kossowski metric.
 Then $K\,d\hat A$ can be smoothly extended as a
 globally defined $2$-form on $M^2$.
\end{Thm}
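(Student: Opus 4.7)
The plan is to verify smoothness of $K\,d\hat A$ in a neighborhood of each singular point $p$ of $d\sigma^2$. Away from the singular set, $K$ is smooth and $d\hat A$ is a smooth $2$-form (by co-orientability), so $K\,d\hat A$ is automatically smooth there, and local smoothness across each singular point gives the desired global smooth extension.

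By Lemma \ref{lem:rankone}, $p$ has rank one, and since non-degeneracy gives $d\lambda\neq 0$ at $p$, the singular set (which coincides with $\{\lambda=0\}$) is a regular curve through $p$. After shrinking and changing coordinates, we may take an adapted local coordinate system $(u,v)$ centered at $p$ in which the singular curve is the $u$-axis and $\partial_v$ is a null vector field along $v=0$. Then $F(u,0)=G(u,0)=0$ automatically, and Corollary \ref{cor:property} further yields $E_v(u,0)=G_v(u,0)=0$.

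These vanishings allow us to write
\[
E = E_0(u) + v^2\, a(u,v),\qquad F = v\, b(u,v),\qquad G = v^2\, c(u,v),
\]
with smooth $E_0,a,b,c$. A direct computation gives $EG-F^2 = v^2 h^2$ where $h$ is smooth with $h(u,0)\neq 0$ (positivity at $v=0$ follows from $\lambda_v(u,0)\neq 0$), so $\lambda = vh$ up to a sign fixed by the co-orientation. The heart of the argument is to substitute these expansions into the Brioschi-type formula \eqref{eq:egregium} and expand $K\lambda^4 = K(EG-F^2)^2$ in powers of $v$. A careful computation shows that the contribution to $K\lambda^4$ from the first three summands of \eqref{eq:egregium} equals $v^2 h^2(a-b_u) + O(v^3)$, while the contribution from the fourth summand equals $-v^2 h^2(a-b_u) + O(v^3)$; the leading $v^2$-terms cancel.

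Therefore $K\lambda^4 = v^3 S(u,v)$ for some smooth function $S$ by Hadamard's lemma applied three times, and dividing yields $K\lambda = S/h^3$, smooth on a full neighborhood of the singular curve. Consequently $K\,d\hat A = K\lambda\, du\wedge dv$ extends smoothly across the singular set, which completes the proof. The principal technical obstacle is establishing the $v^2$-order cancellation in $K\lambda^4$; this depends crucially on all four admissibility relations provided by Corollary \ref{cor:property}, as losing any one of $F(u,0)=G(u,0)=E_v(u,0)=G_v(u,0)=0$ would leave a non-cancellable $O(v^2)$ term and force $K\lambda$ to blow up like $1/v$.
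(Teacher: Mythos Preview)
Your argument has a genuine gap: the coordinate system you invoke --- one in which the singular curve is the $u$-axis \emph{and} $\partial_v$ is a null vector field along it --- exists only at $A_2$-points. At a non-$A_2$ singular point (for instance an $A_3$-point, which a Kossowski metric may certainly possess, since Definition~\ref{def:a2a3} asks only that $d\lambda\neq 0$), the null direction $\eta(0)$ is proportional to the singular direction $\dot\gamma(0)$. If $\gamma$ were the $u$-axis, its tangent at $p$ would be $\partial_u$; the null vector at $p$ would then be proportional to $\partial_u$, contradicting $\partial_v\in\N_p$. Hence the expansions $E=E_0(u)+v^2a$, $F=vb$, $G=v^2c$ --- and Corollary~\ref{cor:property}, which is stated precisely under the hypothesis you need --- are unavailable at such points, and the Brioschi cancellation you describe cannot even be set up there. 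Your $v^2$-cancellation at $A_2$-points is correct and is a pleasant explicit check, but it does not yield smoothness of $K\lambda$ across the non-$A_2$ locus.

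The paper avoids this obstacle by a different and more robust route. It works in a merely \emph{adapted} coordinate system (Definition~\ref{def:adms-coord}), which requires only $\partial_v\in\N_q$ at each singular $q$ and does not force the singular curve onto a coordinate axis; such coordinates exist near every singular point of a Kossowski metric. Rather than attacking $K\lambda$ through \eqref{eq:egregium}, the paper writes down the connection $1$-form $\omega$ of the orthonormal frame \eqref{eq:orthonormal} on the regular set. Because $\tilde{\vect e}_2$ lies in $\N_q$ along $\gamma$, admissibility ($\hat\Gamma_q=0$) forces the coefficients $\Gamma(\partial_u,\vect e_1,\tilde{\vect e}_2)$ and $\Gamma(\partial_v,\vect e_1,\tilde{\vect e}_2)$ to vanish on $\gamma$, and Lemma~\ref{lem:lambda} then lets one divide by $\lambda$ to conclude that $\omega$ extends smoothly across $\gamma$. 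Since $d\omega=K\,d\hat A$ off the singular set, smoothness of $K\,d\hat A$ follows immediately. To salvage your approach you would either have to redo the Taylor expansions relative to an arbitrary singular curve (not a coordinate axis) in adapted coordinates, or switch to the connection-form argument.
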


To prove the assertion, we prepare the following 
lemma, which immediately follows from
the fact that $d\lambda\ne 0$:
\begin{Lemma}\label{lem:lambda}
Let  $(U;u,v)$ be a simply connected local coordinate system 
 centered at a non-degenerate singular point
 $p$ of the frontal metric $d\sigma^2$.
For a $C^\infty$-function $\phi$ on $U$
 which vanishes on the singular set of $d\sigma^2$,
there exist a neighborhood $V(\subset U)$
 of $p$ and  a $C^\infty$-function $\psi$
 on $V$ such that $\phi(u,v)=\lambda(u,v) \psi(u,v)$
 holds on $V$, where
 $\lambda:U\to \R$ is a $C^\infty$-function
 satisfying \eqref{eq:lambda}.
\end{Lemma}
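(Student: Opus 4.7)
The plan is to use that $\lambda$ itself serves as a local defining function for the singular set near $p$, and then apply Hadamard's lemma.

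First I would observe that $\lambda$ vanishes on the singular set of $d\sigma^2$: by \eqref{eq:lambda} we have $\lambda^2=EG-F^2$, and a point is singular for the metric precisely when the Gram determinant vanishes, so $\lambda\equiv 0$ on the singular set. Together with the non-degeneracy assumption $d\lambda(p)\ne 0$, the implicit function theorem shows that in a small neighborhood $V\subset U$ of $p$ the zero set of $\lambda$ is a regular curve, which must coincide with the singular set of $d\sigma^2$ restricted to $V$.

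Next I would pass to convenient local coordinates $(\xi,\eta)$ centered at $p$ in which $\xi=\lambda$; this is possible because $d\lambda\ne0$ at $p$, so $\lambda$ can be completed to a coordinate system. In these coordinates the singular set near $p$ is exactly the line $\{\xi=0\}$. By hypothesis $\phi$ vanishes on the singular set, hence $\phi(0,\eta)=0$ for all $\eta$ in a neighborhood of the origin. Now Hadamard's lemma, applied in the $\xi$-direction, gives
\[
\phi(\xi,\eta)=\xi\int_0^1 \frac{\partial \phi}{\partial \xi}(t\xi,\eta)\,dt
=:\xi\,\psi(\xi,\eta),
\]
with $\psi$ smooth on a sufficiently small neighborhood of the origin. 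Changing back to the original $(u,v)$ coordinates and using $\xi=\lambda$, this reads $\phi=\lambda\,\psi$ on some neighborhood $V$ of $p$, which is the desired factorization.

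The only slightly subtle point is to make sure that the zero locus of $\lambda$ really coincides with the full singular set of $d\sigma^2$ near $p$ (rather than being strictly larger), which is what ensures that the hypothesis ``$\phi$ vanishes on the singular set'' translates into ``$\phi$ vanishes on $\{\xi=0\}$''; this follows from the two observations above. Everything else is a routine application of the implicit function theorem and Hadamard's lemma.
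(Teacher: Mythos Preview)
Your argument is correct and is precisely the standard fleshing-out of what the paper leaves unproved: the paper states only that the lemma ``immediately follows from the fact that $d\lambda\ne 0$'' and gives no further details. Your use of the implicit function theorem to straighten $\{\lambda=0\}$ and Hadamard's lemma to divide is exactly the intended mechanism, and your check that $\{\lambda=0\}$ coincides with the singular set (via $\lambda^2=EG-F^2$) closes the only point that needed comment.
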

\begin{proof}[Proof of Theorem \ref{Thm:K1}]
 We fix a singular point $p$ of the metric $d\sigma^2$
 arbitrarily.
 Let $\gamma$ be the singular curve passing through $p$. 
 Then one can take an adapted local 
 coordinate system $(U;u,v)$ centered at $p$.
 We set
 \begin{equation}\label{eq:orthonormal}
     \vect{e}_1:=\frac{1}{\sqrt{E}}\partial_u,\qquad
     \vect{e}_2:=\frac{1}{\lambda}\tilde{\vect{e}}_2,
     \quad
     \left(
         \tilde{\vect{e}}_2:=
         -\frac{F}{\sqrt{E}}\partial_u+\sqrt{E}\partial_v
     \right),
 \end{equation}
 which gives an orthonormal frame field
 on $U\setminus \op{Im}(\gamma)$,
where $\op{Im}(\gamma)$ denotes the image 
of the curve $\gamma$.
 Consider a $1$-form
 \begin{equation}\label{eq:omega}
  \omega
   :=-\inner{\nabla_{\partial_u} \vect{e}_1}{\vect{e}_2}\,du
     -
     \inner{\nabla_{\partial_v} \vect{e}_1}{\vect{e}_2}\,dv
 \end{equation}
 defined on $U\setminus \op{Im}(\gamma)$,
 where $\nabla$ is the Levi-Civita connection
 of $d\sigma^2$ on $U\setminus \op{Im}(\gamma)$.
 Using the Kossowski pseudo-connection as in \eqref{eq:Gamma}, 
 we have the following expression
 \[
      \omega=
        -\frac{\Gamma(\partial_u,\vect{e}_1, \tilde{\vect{e}}_2)}
                  {\lambda}\,du
        -
              \frac{\Gamma(\partial_v,\vect{e}_1, \tilde{\vect{e}}_2)}
                  {\lambda}\,dv.
 \]
 The vector field  $\tilde{\vect{e}}_2$ 
 is a smooth vector field on $U$ which vanishes along $\gamma$.
 Since $d\sigma^2$ is an admissible metric, 
 $\Gamma(\partial_u,\vect{e}_1, \tilde{\vect e}_2)$ 
 and 
 $\Gamma(\partial_v,\vect{e}_1, \tilde{\vect e}_2)$
 vanish on $\gamma$.
 By Lemma \ref{lem:lambda}, there exist 
 two locally defined smooth functions $a$, $b$ 
 such that
 \[
    \Gamma(\partial_u,\vect{e}_1, \tilde {\vect e}_2)=
     \lambda(u,v) a(u,v),\qquad
    \Gamma(\partial_v,\vect{e}_1, \tilde {\vect e}_2)=
     \lambda(u,v) b(u,v).
 \]
 Thus we can write
 \[
    \omega=-a(u,v)\,du-b(u,v)\,dv,
 \]
 which implies that $\omega$ can be extended as
 a smooth $1$-form on $U$.
 Since $d \lambda \ne 0$,
 the function $\lambda$ changes sign at 
 the singular curve $\gamma$.
 Since $d\sigma^2$ is co-orientable on a simply connected domain,
 the following two subsets 
 \[
     U_+:=\{p\in M^2\,;\, dA_p=d\hat A_p\},\qquad
     U_-:=\{p\in M^2\,;\, dA_p=-d\hat A_p\}
 \]
of $M^2$ are defined.
 Since $\{\vect{e}_1,\vect{e}_2\}$ is a positive (resp.\ negative)
 frame on $U_+$
  (resp.\ on $U_-$),
 the classical connection theory
 yields that $d\omega$ coincides with $K\, dA$ 
 (resp.\ $-K\,dA$)
 on $U_+$
 (resp.\ on $U_-$).
 Thus $d\omega=K\,d\hat A$ holds
 on $U\setminus \op{Im}(\gamma)$. 
 Then by continuity,
 the identity $d\omega=K\,d\hat A$
 holds on  $U$.
 Since $\omega$ is a smooth $1$-form, 
 we get the assertion.
\end{proof}

Let $\nu$ be the unit normal vector field
of a frontal $f:U\to \R^3$.
As pointed out in \cite{MSUY},
 $K\,d\hat A$ coincides with 
the pull-back of the 
canonical area element of the 
unit sphere $S^2$ by $\nu$.
So $f$ is a wave front if $K\,d\hat A$
does not vanish on $U$.

A frontal metric $d\sigma^2$
on a real analytic manifold 
is called a {\it real analytic Kossowski metric}
if one can take $E,F,G,\lambda$ to be real analytic
functions on each real analytic local
coordinate system $(U;u,v)$ 
in Definition \ref{def:frontal}. 
Kossowski 
proved the following:

\begin{Fact}[Kossowski \cite{K}]\label{fact:K}
 Let $p\in M^2$ be a singular point
 of a real analytic Kossowski metric $d\sigma^2$.
 If $Kd\hat A$ does not vanish at $p$,
 there exist a neighborhood $U$ of $p$
 and a real analytic wave front
 $f:U\to \R^3$ such that
 the first fundamental form of $f$
 coincides with $d\sigma^2$ on $U$.
\end{Fact}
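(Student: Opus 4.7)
The plan is to realize the real analytic Kossowski metric as the first fundamental form of a real analytic wave front by reducing to a Cauchy problem solvable via the Cauchy-Kovalevskaya theorem, using the hypothesis $Kd\hat A\ne 0$ at $p$ to ensure that the resulting surface is genuinely a wave front and not merely a frontal.

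I would first set up adapted coordinates. Choose a real analytic adapted local coordinate system $(U;u,v)$ centered at $p$ so that the singular set coincides with $\{v=0\}$ and $\partial_v$ is null along this curve. By Corollary \ref{cor:property} together with $d\lambda(p)\ne 0$, one obtains the factorizations $F(u,v)=v\tilde F(u,v)$ and $G(u,v)=v^2\tilde G(u,v)$ with $\tilde G(u,0)>0$, and $\lambda(u,v)=v\hat\lambda(u,v)$ with $\hat\lambda(u,0)\ne 0$; the hypothesis that $Kd\hat A$ does not vanish at $p$ then says that the real analytic function $K\lambda$ is non-zero at $p$, so after shrinking $U$ it is nowhere vanishing. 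As Cauchy data along the singular curve I would fix a real analytic space curve $c(u):=f(u,0)$ in $\R^3$ with $|c'(u)|^2=E(u,0)$, together with a real analytic unit vector field $\nu_0(u)$ along $c$ orthogonal to $c'(u)$; these represent the prescribed values of the Legendrian lift $(f,\nu)$ on $\{v=0\}$.

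I would then set up a first order real analytic PDE system whose unknowns are $f:U\to\R^3$ and $\nu:U\to S^2$ together with an auxiliary framing. The governing equations are the Gauss-Weingarten equations of the sought wave front, but rewritten using the Kossowski pseudo-connection $\Gamma$ so that all Christoffel-type coefficients extend real analytically across $\{v=0\}$; the accompanying second fundamental form coefficients $L,M,N$ are determined by the Gauss equation $LN-M^2=K(EG-F^2)=(K\lambda)\lambda$ together with the Codazzi-Mainardi equations, and these are real analytic on all of $U$. Cauchy-Kovalevskaya then yields a unique real analytic pair $(f,\nu)$ near $p$. Away from $\{v=0\}$, $f$ is a genuine isometric immersion with unit normal $\nu$ by Bonnet's fundamental theorem for surfaces; real analyticity plus continuity propagate $df\cdot df=d\sigma^2$ across $\{v=0\}$, and the wave-front property (i.e., that $(f,\nu)$ is a Legendrian immersion) follows because the pull-back by $\nu$ of the canonical area form of $S^2$ equals $Kd\hat A$, which is non-vanishing by hypothesis.

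The main obstacle is ensuring that the Cauchy-Kovalevskaya system has real analytic coefficients across $\{v=0\}$, where the classical Christoffel symbols of $d\sigma^2$ contain the singular factor $1/(EG-F^2)=1/\lambda^2$. The point, exactly as in the proof of Theorem \ref{Thm:K1}, is that admissibility combined with Lemma \ref{lem:lambda} forces any Kossowski pseudo-connection expression of the form $\Gamma(\partial_i,\partial_j,V)$ with $V\in\N$ to be divisible by $\lambda$ as a real analytic function, so after clearing denominators the resulting PDE system has real analytic coefficients on all of $U$, precisely as required for Cauchy-Kovalevskaya to apply.
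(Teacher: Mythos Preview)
The paper does not actually prove this statement: it is recorded as a \textbf{Fact} attributed to Kossowski \cite{K}, with no argument supplied in the present paper. So there is no ``paper's own proof'' to compare your proposal against; the authors simply quote Kossowski's realization theorem and use it as input for Proposition~\ref{prop:a2a3} and Corollary~\ref{cor:chr}.

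That said, your outline is in the right spirit for how such realization theorems are proved (and is consonant with Kossowski's method): one regularizes the Gauss--Weingarten system across the singular curve using admissibility, as in the proof of Theorem~\ref{Thm:K1}, and then invokes Cauchy--Kovalevskaya. Two points in your sketch would need to be tightened before it becomes a proof. First, saying that $L,M,N$ are ``determined by the Gauss equation together with the Codazzi--Mainardi equations'' is not accurate as written: Gauss and Codazzi are compatibility constraints, not a determined system for the second fundamental form, so you must also prescribe appropriate initial data for $L,M,N$ (or for $\nu$ and its first jet) along the Cauchy curve and explain why the combined system is of Cauchy--Kovalevskaya type. Second, your Cauchy hypersurface is the singular curve $\{v=0\}$ itself, where the metric degenerates in the $\partial_v$-direction; ``clearing denominators'' may kill the coefficient of the highest $v$-derivative precisely on the initial curve, which is exactly the situation in which Cauchy--Kovalevskaya fails. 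You need to verify that after regularization the system is genuinely noncharacteristic along $\{v=0\}$ (this is where the non-degeneracy $\lambda_v\ne 0$ and the hypothesis $K\lambda\ne 0$ are used in an essential way). Your final step, deducing that $(f,\nu)$ is a Legendrian immersion from $K\,d\hat A\ne 0$, is correct: non-vanishing of $\nu^*(\text{area}_{S^2})=K\,d\hat A$ forces $\nu$ to be an immersion, hence $(f,\nu)$ is as well.
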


By this realization theorem, 
it is reasonable to see
Kossowski metrics
as the best class of metrics to 
describe intrinsic invariants
on wave fronts. 
Intrinsic properties of cuspidal edges
and swallowtails are not discussed
in \cite{K}. From now on, we shall
give  intrinsic characterizations of
cuspidal edges and swallowtails
(cf.\ Figure~\ref{fig:sw}).

\medskip
Let $d\sigma^2$ be a Kossowski metric
and  $p$ a non-degenerate singular point.
Let $(u,v)$ be a local coordinate system
centered at $p$.
By the implicit function theorem,
there exists a regular curve $\gamma(t)$
($|t|<\epsilon$)
on the $uv$-plane such that $\gamma(0)=p$,
where  $\epsilon>0$.
In this setting, there exists a smooth 
vector field $\eta(t)$ along $\gamma(t)$
such that $\eta(t)$ belongs to $\N_{\gamma(t)}$.

\begin{Def} \label{def:a2a3add}
If $\eta(0)$ is linearly independent
of the singular direction $\dot \gamma(0)$,
then $p$ is called an {\it $A_2$-point}.
If $p$ is not an $A_2$-point, but
\[
   \left.\frac{d}{dt}\det\bigl(\dot \gamma(t),\eta(t)\bigr)\right|_{t=0}\ne 0
\]
holds, then $p$ is called an {\it $A_3$-point}, where
$\det\bigl(\dot \gamma(t),\eta(t)\bigr)$ is the determinant
of two vectors $\dot \gamma(t),\eta(t)$ in the $uv$-plane $\R^2$.
\end{Def}

The following assertion holds:

\begin{figure}
 \begin{center}
   \begin{tabular}{c@{\hspace{2cm}}c}
        \includegraphics[height=4.0cm]{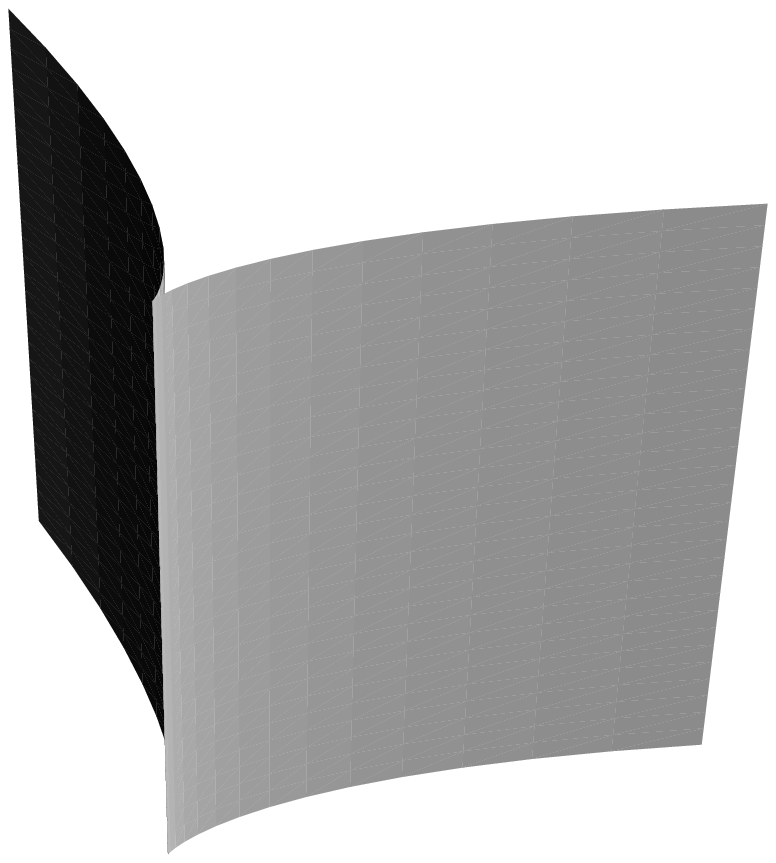} &
        \includegraphics[height=4.0cm]{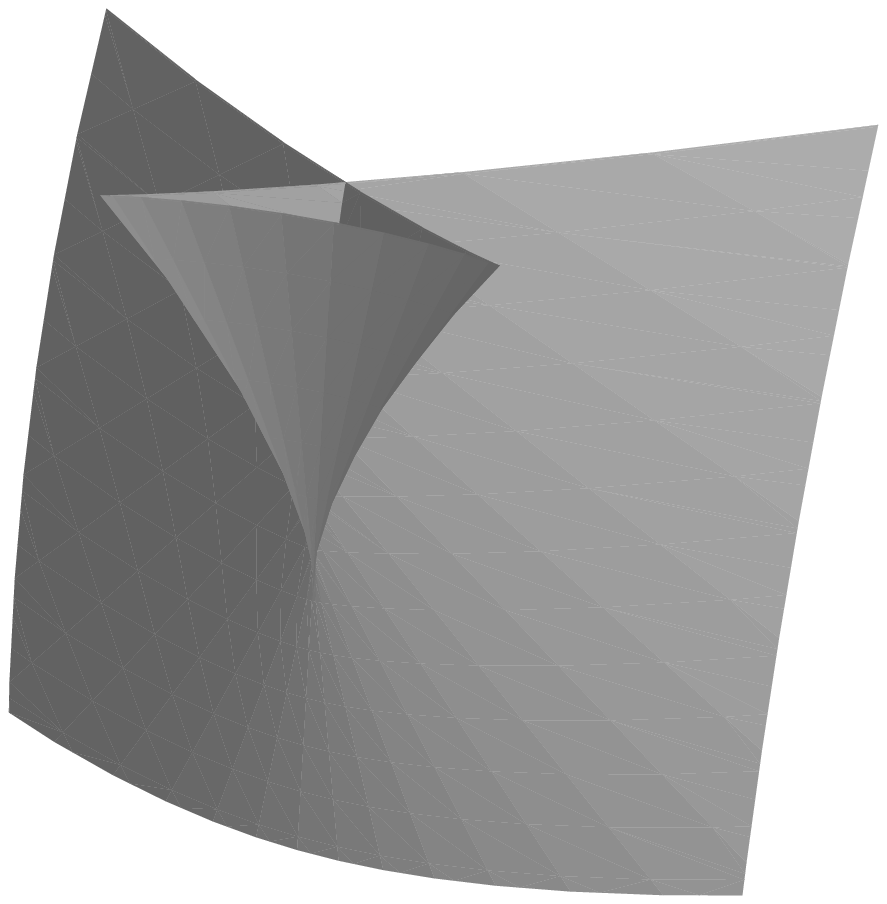}
    \end{tabular}
 \end{center}
\caption{A cuspidal edge and a swallowtail}
\label{fig:sw}
\end{figure}

\begin{Prop}\label{prop:a2a3}
Let $f:M^2\to \R^3$ be a $C^\infty$ wave front.
If $f$ has
a cuspidal edge  
 {\rm(}resp.\ a swallowtail{\rm)}
 singular point\footnote{The
definitions of cuspidal edges and swallowtails
are given in the appendix.}, then it
corresponds to an $A_2$-point 
{\rm(}resp.\ an  $A_3$-point{\rm)}
of
the induced metric $d\sigma^2(:=df\cdot df)$. 
Conversely, 
if a germ of a
Kossowski metric
at an $A_2$-point 
$($resp.\ an $A_3$-point$)$ 
is real analytic 
and $Kd\hat A$ does not vanish, 
then it can be realized as the induced metric
of a wave front with
cuspidal edges $($resp. a swallowtail$)$.
\end{Prop}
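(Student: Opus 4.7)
The plan is to reduce both implications to the extrinsic singularity criterion of Kokubu--Rossman--Saji--Umehara--Yamada, which characterizes cuspidal edges and swallowtails in terms of singular curves and null vector fields. Namely, for a frontal $f\colon M^2\to \R^3$ at a non-degenerate singular point $p$, with singular curve $\gamma(t)$ and a null vector field $\tilde\eta(t)$ along $\gamma$ characterized extrinsically by $df(\tilde\eta)=0$, the point $p$ is a cuspidal edge if and only if $\tilde\eta(0)$ and $\dot\gamma(0)$ are linearly independent, and $p$ is a swallowtail if and only if $\tilde\eta(0)$ is proportional to $\dot\gamma(0)$ while $\frac{d}{dt}\det(\dot\gamma(t),\tilde\eta(t))|_{t=0}\ne 0$. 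Comparing with Definition \ref{def:a2a3add}, the proposition amounts to identifying the intrinsic data $(\gamma,\eta)$ used in the $A_2/A_3$ definition with the extrinsic data $(\gamma,\tilde\eta)$ appearing in this criterion.

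For the first half, suppose $f$ has a cuspidal edge (resp.\ swallowtail) at $p$. By Proposition \ref{prop:non-deg}, $p$ is a non-degenerate singular point of $d\sigma^2$ and the singular sets of $f$ and $d\sigma^2$ coincide, so the singular curve $\gamma$ is the same in both settings. Since $d\sigma^2=df\cdot df$ is positive semi-definite, for any $v\in T_pM^2$ we have $v\in \N_p$ if and only if $df(v)\cdot df(w)=0$ for every $w$, which (using Cauchy--Schwarz applied to the positive semi-definite form) is equivalent to $df(v)=0$. Therefore at each point of $\gamma$, the intrinsic null line $\N_{\gamma(t)}$ agrees with the extrinsic kernel of $df$, and one may choose $\eta(t)=\tilde\eta(t)$ up to a nonvanishing smooth scalar. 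The KRSUY criterion then translates verbatim into the $A_2$ (resp.\ $A_3$) condition of Definition \ref{def:a2a3add}.

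For the converse, assume $d\sigma^2$ is a real analytic Kossowski metric with an $A_2$-point (resp.\ $A_3$-point) at $p$ and that $Kd\hat A$ does not vanish at $p$. Fact \ref{fact:K} supplies a real analytic wave front $f\colon U\to \R^3$ on a neighborhood $U$ of $p$ whose first fundamental form is $d\sigma^2$. Invoking Proposition \ref{prop:non-deg} once more, $p$ is a non-degenerate singular point of $f$ with the same singular curve $\gamma$, and by the identification of null directions described above, the intrinsic $A_2$ (resp.\ $A_3$) condition coincides with the KRSUY criterion for $f$. Hence $f$ realizes the metric germ as a wave front with a cuspidal edge (resp.\ swallowtail) at $p$.

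The main obstacle is pinpointing the exact compatibility between the intrinsic null line $\N_p$ and the extrinsic null kernel of $df$; once this is made precise using positive semi-definiteness of $d\sigma^2$ together with the rank-one property at a non-degenerate singular point (Lemma \ref{lem:rankone}), the proposition follows from the already established extrinsic criterion. A secondary point of care is that in the converse direction the hypothesis $Kd\hat A\ne 0$ is exactly what is needed to apply Fact \ref{fact:K}, so the wave front (rather than merely frontal) conclusion is automatic.
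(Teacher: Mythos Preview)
Your proposal is correct and follows essentially the same approach as the paper: compare the intrinsic $A_2/A_3$ conditions of Definition~\ref{def:a2a3add} with the extrinsic KRSUY criterion (Fact~\ref{fact:krsuy}) for the forward direction, and invoke Kossowski's realization theorem (Fact~\ref{fact:K}) for the converse. The paper's own proof is extremely terse (two sentences), whereas you have helpfully spelled out the key identification $\N_p=\ker df_p$ that makes the comparison work; this is implicit in the paper but worth making explicit, and your justification via positive semi-definiteness is sound (indeed even simpler than Cauchy--Schwarz: $v\in\N_p$ forces $|df(v)|^2=d\sigma^2(v,v)=0$).
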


\begin{proof}
Comparing Definition \ref{def:a2a3add}
and Fact~\ref{fact:krsuy} in the appendix,
we get the first assertion.
Finally, we get the second assertion
applying Fact \ref{fact:K}.
\end{proof}

\begin{Rmk}
 If $f$ is not a wave front, singular points of
 $f$ corresponding to $A_2$-points
(resp.~$A_3$-points)
of 
the first fundamental
 form might not be cuspidal edges 
(resp.~swallowtails).
 In fact, a cuspidal cross cap $f_{CCR}$
given in the appendix
(resp.~a $C^\infty$ map $f_2$ given in 
\cite[Example~1.6]{MSUY})
is a frontal (but not a front) which 
induces a Kossowski metric
with an $A_2$-point 
(resp.~an $A_3$-point) 
satisfying $Kd\hat A=0$.
Kossowski metrics
might have singular points other than
$A_2$ or $A_3$
in general
(cf.~the induced metric of
the $C^\infty$ map $f_1$ given in 
\cite[Example~1.6]{MSUY}).
\end{Rmk}

In \cite{MSUY}, the limiting normal 
curvature $\kappa_\nu$ is introduced for 
non-degenerate singular points
of wave fronts, which can be
interpreted as the 
normal curvature of the surface 
with respect to the singular direction.
Moreover, in \cite{MSUY}, 
the cuspidal curvature $\kappa_c$
along the cuspidal edge singularities
was also defined, 
and it was also shown that the product
$\kappa_\Pi:=\kappa_\nu\kappa_c$ is an intrinsic 
invariant of cuspidal edges.
Isometric deformations of cuspidal edges
were discussed in \cite{NUY}
and it was shown that
$\kappa_\nu$ and 
$\kappa_c$ are both extrinsic invariants.
The condition
$K\, d\hat A\ne 0$
(cf.\ Fact \ref{fact:K})
is equivalent to the condition that
$\kappa_\nu \ne 0$ at the singular points.
On the other hand,
the singular curvature $\kappa_s$
along the cuspidal edge singularities
was defined in \cite{SUY}, 
which is an intrinsic invariant, and
played an important role in describing the Gauss-Bonnet type formula
for closed wave fronts. 
From now on, we shall explain these two
intrinsic invariants $\kappa_s$ and
$\kappa_\Pi$ of cuspidal edge singularities
in terms of Kossowski metrics.

\begin{Def}\label{def:adapted}
 Let $p$ be an $A_2$-point of
 a given Kossowski metric $d\sigma^2$.
 Then an adapted (local) coordinate system $(u,v)$
 of $M^2$ in the sense of Definition~\ref{def:adms-coord} is called
 a {\it strongly adapted coordinate system\/}
 if the $u$-axis consists of singular points.
 (By the adaptedness,
 $\partial_v\in \N_{(u,0)}$ holds.)
\end{Def}

The existence of an strongly adapted coordinate system
at a given $A_2$-point
can be proven easily. 
Since the strongly adapted coordinate system 
satisfies the property in the 
assumption of Corollary~\ref{cor:property},
the following assertion is proved: 
\begin{Prop}\label{prop:adapted}
 Let $(u,v)$ be a strongly adapted coordinate system.
 Then it holds that
 \begin{equation}\label{eq:EFG}
  F(u,0)=G(u,0)=E_v(u,0)=G_v(u,0)=0,
 \end{equation}
 where $d\sigma^2=E\,du^2+2F\,du\,dv+G\,dv^2$.
 Moreover, if another local coordinate system
 $(\xi,\eta)$ satisfies
 \begin{equation}\label{eq:EFG2}
v(\xi,0)=u_\eta(\xi,0)=0,\quad
   u_\xi(\xi,0)\ne 0,\quad  v_\eta(\xi,0)\ne 0,
 \end{equation}
 then $(\xi,\eta)$ is also a strongly adapted coordinate system.
\end{Prop}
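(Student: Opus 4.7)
The plan is to unwind the definitions in two stages: first derive the coordinate identities from the defining properties of a strongly adapted coordinate system, and then verify that the substitute coordinate system $(\xi,\eta)$ again satisfies those defining properties.

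For the first part, since $(u,v)$ is strongly adapted, the $u$-axis is by definition the singular set and $\partial_v \in \N_{(u,0)}$ for every $u$ near $0$. Evaluating the null condition $\langle \partial_v, X\rangle = 0$ for $X=\partial_u$ and $X=\partial_v$ along the $u$-axis immediately gives $F(u,0) = G(u,0) = 0$. Differentiating $G(u,0)=F(u,0)=0$ with respect to $u$ yields $G_u(u,0)=F_u(u,0)=0$, which puts us exactly in the hypothesis of Corollary~\ref{cor:property}; since every point of the $u$-axis is an admissible singular point (the metric being Kossowski, by Lemma~\ref{lem:rankone} all singular points have rank one and by definition all are admissible), the corollary delivers $E_v(u,0)=G_v(u,0)=0$, establishing \eqref{eq:EFG}.

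For the second part, I would verify the two defining conditions of a strongly adapted coordinate system for $(\xi,\eta)$. First, the hypothesis $v(\xi,0)=0$ together with the non-vanishing $u_\xi(\xi,0)\ne 0$ means that the $\xi$-axis is mapped diffeomorphically into the $u$-axis (in $(u,v)$-coordinates); hence the $\xi$-axis consists entirely of singular points of the metric. Second, for each $\xi$ close to $0$, using the chain rule one has $\partial_\eta = u_\eta \partial_u + v_\eta \partial_v$, which at $(\xi,0)$ reduces (by $u_\eta(\xi,0)=0$) to $v_\eta(\xi,0)\,\partial_v$. Since $\partial_v \in \N_{(u,0)}$ on the singular set, it follows that $\partial_\eta \in \N_{(\xi,0)}$, so $(\xi,\eta)$ is adjusted at each point of the $\xi$-axis. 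Hence $(\xi,\eta)$ is strongly adapted. One also checks along the way that $(\xi,\eta)$ is genuinely a local coordinate system: the Jacobian $u_\xi v_\eta - u_\eta v_\xi$ at the origin equals $u_\xi(0,0)\, v_\eta(0,0)\ne 0$ thanks to the non-vanishing assumptions together with $u_\eta(0,0)=0$.

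There is no substantive obstacle: both parts are a direct bookkeeping exercise once Corollary~\ref{cor:property} and Lemma~\ref{lem:u2} are in hand. If anything requires care, it is being explicit that ``strongly adapted'' encodes both the geometric information (the $u$-axis is the singular set) and the linear-algebraic information (adjustedness at every point of this axis, not only at the origin), so that in part~(2) one must verify adjustedness along the entire $\xi$-axis rather than merely at one point.
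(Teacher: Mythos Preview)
Your proposal is correct and follows essentially the same approach as the paper: the paper's entire argument is the one-line remark preceding the proposition, namely that a strongly adapted coordinate system satisfies the hypotheses of Corollary~\ref{cor:property}, which immediately yields \eqref{eq:EFG}. Your write-up simply unpacks this reference and, in addition, supplies the verification of the second part (that $(\xi,\eta)$ is again strongly adapted), which the paper leaves entirely to the reader.
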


Let $p$ be an $A_2$-point of
a given Kossowski metric $d\sigma^2$,
and $(u,v)$ a strongly adapted coordinate system
centered at $p$. Without loss of generality,
we may assume that $\lambda_v>0$, 
where $\lambda$ is a function satisfying \eqref{eq:lambda}.
We set
\begin{equation}\label{eq:ks}
 \kappa_s(u):=
  \frac{-F_vE_u + 2EF_{uv}-EE_{vv}}
  {2E^{3/2}\lambda_v},
\end{equation}
which is called the {\it singular curvature}%
\footnote{There is a typographical error in \cite[Proposition 1.8 in Page 497]{SUY}.
  In fact,  
  the right-hand side of the expression of
  $\kappa_s(u)$ should be divided by 2.
}
at the singular point $(u,0)$,
where $d\sigma^2=E\,du^2+2F\,du\,dv+G\,dv^2$.
As shown in \cite{SUY},
the singular curvature along cuspidal edges
has the same expression as \eqref{eq:ks}.
So the above definition gives a generalization 
of singular curvature
for $A_2$-points of Kossowski metrics. 
The following assertion holds:

\begin{Prop}\label{prop:int}
 The value of $\kappa_s$ does not depend
 on a choice of strongly adapted coordinate systems satisfying 
 $\lambda_v>0$.
 In particular, it does not depend on the
 orientation of the singular curve. 
\end{Prop}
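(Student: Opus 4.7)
The plan is to verify invariance by direct computation of the change of coordinates. Let $(\xi,\eta)$ be a second strongly adapted coordinate system centered at $p$ with $\tilde\lambda_\eta>0$. By Proposition \ref{prop:adapted}, we have $v(\xi,0)=0$ and $u_\eta(\xi,0)=0$, so differentiating in $\xi$ along the singular curve yields also $v_\xi = v_{\xi\xi} = u_{\xi\eta} = 0$ at $\eta=0$. Set $a:=u_\xi|_{\eta=0}$ and $c:=v_\eta|_{\eta=0}$; both are nonzero.

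First I would expand the standard transformation formulas for $\tilde E,\tilde F,\tilde G$ under $(u,v)\to(\xi,\eta)$ and evaluate, at $\eta=0$, the first and second partials appearing in \eqref{eq:ks}. Using \eqref{eq:EFG} together with its consequences $F_u=G_u=E_{uv}=0$ on $v=0$ (obtained by differentiating $F(u,0)=G(u,0)=E_v(u,0)=0$ in $u$), one obtains formulas for $\tilde E,\tilde E_\xi,\tilde F_\eta,\tilde F_{\xi\eta},\tilde E_{\eta\eta}$ at $\eta=0$ that depend on the jet data $a,a',c,c',u_{\eta\eta},u_{\xi\eta\eta},v_{\xi\eta}$ of the coordinate change. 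In particular one finds $\tilde E=Ea^2$, $\tilde E_\xi=E_u a^3+2Eaa'$, and $\tilde F_\eta=Eau_{\eta\eta}+F_v a c^2$.

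The key step is cancellation. When one assembles the numerator
\[
\tilde Q:=-\tilde F_\eta\tilde E_\xi+2\tilde E\tilde F_{\xi\eta}-\tilde E\tilde E_{\eta\eta}
\]
of the expression for $\tilde\kappa_s$, all terms involving the a priori unknown third-order data $u_{\eta\eta}$, $u_{\xi\eta\eta}$, $v_{\xi\eta}$ cancel identically, leaving
\[
\tilde Q\big|_{\eta=0}=a^4c^2\bigl(-F_v E_u+2EF_{uv}-EE_{vv}\bigr).
\]
For the denominator, write $\tilde\lambda=\epsilon(u_\xi v_\eta-u_\eta v_\xi)\lambda$ with $\epsilon\in\{\pm1\}$ fixed by the requirement $\tilde\lambda_\eta>0$; then $\tilde\lambda_\eta|_{\eta=0}=\epsilon a c^2\lambda_v$ and $\tilde E^{3/2}=E^{3/2}|a|^3$. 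A short case check on the sign of $a$ (which determines $\epsilon$) gives $|a|^3\epsilon a=a^4$, hence $2\tilde E^{3/2}\tilde\lambda_\eta|_{\eta=0}=2E^{3/2}a^4c^2\lambda_v$. The common factor $a^4c^2$ cancels, producing $\tilde\kappa_s=\kappa_s$.

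The main obstacle is that the expansions of $\tilde F_{\xi\eta}$ and $\tilde E_{\eta\eta}$ each generate several contributions involving $u_{\eta\eta}$, $u_{\xi\eta\eta}$, $v_{\xi\eta}$, none of which are determined by the metric; checking that they cancel completely is the substantive content of the proposition and relies on the precise coefficients $(-1,2,-1)$ in the combination $\tilde Q$. The second assertion, independence of the orientation of the singular curve, then follows from the first by applying it to the coordinate change $\xi=-u$, $\eta=v$, which falls in the case $a<0$, $\epsilon=-1$.
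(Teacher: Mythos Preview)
Your proposal is correct and follows essentially the same route as the paper: both arguments expand the transformation formulas for $\tilde E,\tilde F,\tilde G$, compute the relevant derivatives along $\eta=0$ using \eqref{eq:EFG} and \eqref{eq:EFG2}, and verify that the numerator transforms by the factor $u_\xi^4 v_\eta^2$ while the denominator transforms by the same factor. The only cosmetic difference is that the paper normalizes to $u_\xi>0$ by replacing $(\xi,\eta)$ with $(-\xi,\eta)$ if necessary (so that $\tilde E^{3/2}=u_\xi^3 E^{3/2}$ and $\tilde\lambda_\eta=v_\eta^2 u_\xi\lambda_v$ without absolute values), whereas you keep track of the sign via $\epsilon$ and the identity $|a|^3\epsilon a=a^4$; both lead to the same cancellation.
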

\begin{proof}
 We let $(\xi,\eta)$ be another
strongly adapted coordinate system. 
 Then it holds that
 \begin{align}
  \tilde E&=E u_\xi^2+2F u_\xi v_\xi+G v_\xi^2, \label{eq:ee}\\
  \tilde F&=E u_\xi u_\eta+F (u_\xi v_\eta+u_\eta v_\xi)
  +G u_\xi u_\eta, \label{eq:ff}
  \\
  \tilde G&=E u_\eta^2+2F u_\eta v_\eta+G v_\eta^2, 
  \label{eq:gg}
 \end{align}
 where
 $d\sigma^2=\tilde E\,d\xi^2+2\tilde F\,d\xi\, d\eta+\tilde G\,d\eta^2$.
 Using \eqref{eq:EFG} and \eqref{eq:EFG2}, we have that
 \begin{align*}
  \tilde E&=E u_\xi^2,\qquad  
  \tilde E_{\xi}=u_\xi^3 E_u+2 Eu_{\xi\xi} u_\xi, \\
  \tilde E_{\eta\eta}&=
  u_\xi^2 v_\eta^2 E_{vv}+u_{\eta\eta} u_\xi^2 E_u
  +2 Eu_{\xi} u_{\xi\eta\eta}
  +4 u_\xi v_\eta v_{\xi\eta} F_v, \\
  \tilde F_\eta&
  =u_{\eta\eta}u_\xi E+u_\xi v_\eta^2 F_v, \\
  \tilde F_{\xi\eta}
  &=u_{\eta\eta} u_\xi^2 E_u
  +u_{\eta\eta}u_{\xi\xi} E+u_\xi u_{\xi\eta\eta} E+v_\eta^2 u_\xi^2 F_{uv}
  +v_\eta^2u_{\xi\xi} F_v+2 u_\xi v_{\xi\eta} v_\eta F_v
 \end{align*}
hold along the singular curve.
 Using these relations, one can see that
 \[
    -\tilde F_\eta\tilde E_\xi + 2\tilde E\tilde F_{\xi\eta}-
     \tilde E \tilde E_{\eta\eta}
     =u_\xi^4 v_\eta^2\left(-F_vE_u + 2EF_{uv}-EE_{vv}\right)
 \]
 holds on the $u$-axis.
 Since $d\sigma^2$ is a frontal metric,
 there exists a $C^\infty$-function $\tilde \lambda$
 such that $\tilde E \tilde G-\tilde F^2=\tilde \lambda^2$.
 Since
$\tilde \lambda=\pm (u_\xi v_\eta -u_\eta v_\xi) \lambda$,
 the fact that $\lambda(u,0)=0$ implies that
 \begin{equation}\label{eq:c-change}
  \tilde \lambda_\eta=\pm v^2_\eta u_\xi \lambda_v
 \end{equation}
holds on the singular curve.
 Replacing $(\xi,\eta)$ by $(-\xi,\eta)$ if necessary,
 we may assume that $u_\xi>0$.
 If we assume $\tilde \lambda_\eta>0$, then  
 $\tilde \lambda_\eta=v^2_\eta u_\xi \lambda_v$
 holds.
 Using the relation $\tilde E^{3/2}=u_\xi^3 E^{3/2}$,
 one can easily check the coordinate independence of
 the definition of $\kappa_s$.
 The last assertion follows if we consider
 the coordinate change $(u,v)\mapsto (-u,v)$
 (in this case, $F$ and $u$ change to $-F$ and $-u$, respectively).
\end{proof}

\begin{Def}\label{def:st}
 A strongly adapted coordinate system $(u,v)$ at an
 $A_2$-point $p$
 is said to be {\it normalized\/}
 if it satisfies the following three conditions:
 \begin{itemize}
  \item[{\rm (i)}] $E(u,0)=1$,
  \item[{\rm (ii)}] $\lambda_v(u,0)=\pm 1$, that is,  $G_{vv}(u,0)=2$,
  \item[{\rm (iii)}] $F(u,v)=0$,
 \end{itemize}
 where $E$, $F$, $G$, $\lambda$ are smooth functions
 given by $d\sigma^2=E\,du^2+2\,F\,du\,dv+G\,dv^2$
 and $EG-F^2=\lambda^2$. 
\end{Def}
\begin{Prop}\label{prop:Kos}
 Let $p$ be an $A_2$-point 
 of a Kossowski metric $d\sigma^2$.
 Then there exists a normalized strongly adapted coordinate
 system at $p$.
\end{Prop}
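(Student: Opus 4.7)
The plan is to construct normalized coordinates by four successive coordinate changes, starting from any strongly adapted system $(\xi,\eta)$ at $p$, normalized so that $\lambda_\eta(\xi,0)>0$. Such an initial system exists by the discussion immediately following Definition~\ref{def:adapted}. First, reparametrize the singular curve by arc length: set $u:=\int_0^\xi\sqrt{E(s,0)}\,ds$ and keep $v=\eta$; in the resulting coordinates we have $E(u,0)=1$, and the conditions of Proposition~\ref{prop:adapted} persist. Second, using $E>0$ near $p$, write
\[
d\sigma^2=\theta^2+\frac{\lambda^2}{E}\,dv^2,\qquad \theta:=\sqrt{E}\,du+\frac{F}{\sqrt{E}}\,dv.
\]
Since $\theta|_{v=0}=du$ is nowhere vanishing, the classical two-dimensional integrating-factor theorem (the first-order linear PDE $d(\mu\theta)=0$ admits a solution with non-characteristic Cauchy data on $\{v=0\}$) yields a smooth positive $\mu$ with $\mu(u,0)=1$ such that $\mu\theta$ is closed. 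A primitive $u'$ of $\mu\theta$ with $u'(u,0)=u$ puts the metric into the form $d\sigma^2=\mu^{-2}(du')^2+(\lambda^2/E)\,dv^2$, so in $(u',v)$ we have $F'\equiv 0$ identically, $E'(u',0)=1$, and the singular set is still $\{v=0\}$.

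Third, set $c(u'):=\lambda'_v(u',0)>0$ and define $\bar u:=u'$, $\bar v:=\sqrt{c(u')}\,v$. Using $E'(u',0)=1$, $F'\equiv 0$, $G'=\lambda^2/E$, and $G'_{vv}(u',0)=2c(u')^2$, a direct computation shows that in the $(\bar u,\bar v)$ coordinates
\[
\bar E(\bar u,0)=1,\qquad \bar G_{\bar v\bar v}(\bar u,0)=2,\qquad \bar F=O(\bar v^3);
\]
in particular $\bar F$, $\bar F_{\bar v}$, and $\bar F_{\bar v\bar v}$ all vanish along $\{\bar v=0\}$. Fourth, apply the integrating-factor procedure a second time, now to $\bar\theta:=\sqrt{\bar E}\,d\bar u+(\bar F/\sqrt{\bar E})\,d\bar v$ (again nonvanishing at $\bar v=0$), to produce coordinates $(\tilde u,\bar v)$ with $\tilde F\equiv 0$ and $\tilde E(\tilde u,0)=1$. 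Differentiating $\bar\lambda^2=\bar E\bar G-\bar F^2$ twice in $\bar v$ at $\bar v=0$ and using $\bar F_{\bar v}(\bar u,0)=0$ gives $\bar\lambda_{\bar v}(\bar u,0)^2=1$; since the new $\tilde G$ equals $\bar\lambda^2/\bar E$, we conclude $\tilde G_{\bar v\bar v}(\tilde u,0)=2$. Renaming $(\tilde u,\bar v)$ as $(u,v)$ yields the required normalized strongly adapted coordinate system.

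The main obstacle is that the three normalizations interact: a change $v\mapsto V(u,v)$ keeping $u$ fixed cannot force $F\equiv 0$ because $F_v(u,0)$ need not vanish, while the $u$-dependent rescaling of $v$ required to achieve $G_{vv}(u,0)=2$ necessarily destroys any previously achieved orthogonality $F=0$. The four-step scheme resolves this by first killing $F$; then performing the rescaling in such a way that the reintroduced $F$ is automatically flat to second order at $\bar v=0$; and finally removing this small residual $F$ by a second integrating factor, where the flatness of $\bar F$ is precisely what guarantees that the last step preserves both $E(u,0)=1$ and $G_{vv}(u,0)=2$.
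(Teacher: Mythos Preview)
Your argument is correct and follows essentially the same path as the paper: orthogonalize (you via an integrating factor for the $1$-form $\theta=\sqrt{E}\,du+(F/\sqrt{E})\,dv$; the paper via the Kobayashi--Nomizu lemma applied to the orthogonal pair $X=\partial_a$, $Y=-\check F\partial_a+\check E\partial_b$, which is the same device), then reparametrize the singular curve by arc length to get $E(u,0)=1$, then rescale $v$ by a function of $u$ to force $G_{vv}(u,0)=2$.

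The substantive difference is your fourth step. The paper performs the rescaling $u=\xi$, $v=\eta/\sqrt[4]{\tilde G_{\eta\eta}(\xi,0)/2}$ and then asserts that ``it is obvious that $\partial_v$ is perpendicular to $\partial_u$''; but, as you observed, a $u$-dependent rescaling of $v$ reintroduces a cross term. Concretely, with $\psi(u)=\sqrt[4]{\tilde G_{\eta\eta}(u,0)/2}$ one finds $F=v\,\psi(u)\psi'(u)\,\tilde G$, which is $O(v^3)$ but not identically zero unless $\tilde G_{\eta\eta}(\cdot,0)$ happens to be constant. Your remedy---running the integrating-factor step a second time on $\bar\theta$---removes this residual $\bar F$, and your verification that the flatness $\bar F=O(\bar v^3)$ is precisely what makes both $\tilde E(\tilde u,0)=1$ and $\tilde G_{\bar v\bar v}(\tilde u,0)=2$ survive the second orthogonalization is the point missing from the paper's argument. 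So your proof is effectively a repair of the paper's, at the cost of one extra (routine) step; conversely, the paper's use of the Kobayashi--Nomizu straightening lemma is slightly slicker than writing out the integrating-factor PDE explicitly, but the content is the same.
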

\begin{proof}
 We fix a strongly adapted coordinate system
 $(U;a,b)$ at $p$, and
 let $d\sigma^2=\check E\,da^2+2 \check F\,da\,db+\check G\,db^2$.
 Then
 \[
    X:=\partial_{a},\qquad
    Y:=-\check F\partial_{a}+\check E \partial_{b}
 \]
 are two vector fields on $U$
 that are mutually orthogonal.
 Then by applying the lemma in Page 182 just 
 after Proposition 5.2 in Kobayashi-Nomizu \cite{KN},
 there exists a local coordinate system
 $(x,y)$ such that
 $\partial_x$ and $\partial_y$
 are proportional to $X$ and $Y$,
 respectively, and
$y(a,0)=0$ (namely, the singular set
is the $x$-axis).
Moreover, since $\partial_b$ is the null direction on the
singular set $\{b=0\}$, 
$Y$ gives a null vector field along the singular set.
Thus $(x,y)$ is a strongly adapted coordinate system.
Since $X$, $Y$ are orthogonal, 
the metric has the expression
\[
     d\sigma^2=\hat E\,dx^2+\hat G\,dy^2,
\]
 where $\hat E (>0)$  and $\hat G$ are smooth functions in $(x,y)$.
 Consider the coordinate change
 \[
     \xi:=\int_0^x\sqrt{\hat E(t,0)}\,dt,\qquad \eta:=y,
 \]
 which is strongly adapted, and the metric 
 can be expressed by
 $d\sigma^2=\tilde E\,d\xi^2+\tilde G\,d\eta^2$
 with
 \[
     \tilde E(\xi,0)=1.
 \]
 In particular, $\tilde G_{\eta\eta}>0$ holds on the $\xi$-axis.
 In fact,
 since $d\sigma^2$ is frontal, there exists
 a smooth function $\tilde\lambda$ such that 
 $\tilde\lambda^2=\tilde E\tilde G$.
 Since $\tilde\lambda=0$ on the singular set $\{\eta=0\}$,
 non-degeneracy (cf.\ Definition \ref{def:a2a3})
 implies $\tilde\lambda_{\eta}\neq 0$ on the $\xi$-axis.
 Differentiating $\tilde\lambda^2 = \tilde E\tilde G$ 
 twice with respect to $\eta$, we have
 \[
     2\lambda\lambda_{\eta\eta}+2(\lambda_{\eta})^2
     = 2 \tilde E_{\eta}\tilde G_{\eta}+ \tilde E_{\eta\eta}\tilde G
       + \tilde E\tilde G_{\eta\eta}.
 \]
 Then by Proposition~\ref{prop:property} and
 Corollary~\ref{cor:property},
 it holds on the $\xi$-axis that
 \[
     \tilde G_{\eta\eta}=2(\lambda_{\eta})^2>0,
 \]
 and hence $\tilde G_{\eta\eta}(\xi,0)>0$.
 Now we set
 \[
    u:=\xi, \qquad
    v:=\frac{\eta}{\sqrt[4]{\tilde G_{\eta\eta}(\xi,0)/2}},
 \]
 giving a desired normalized strongly adapted coordinate system
 as follows:
 By definition,
 we have the expression
 $d\sigma^2=E\,du^2+G\,dv^2$.
 It is obvious that $\partial_v$ is perpendicular to $\partial_u$.
 Differentiating \eqref{eq:gg},
 we get
 \[
     \tilde G_{\eta\eta}= G_{vv} v_\eta^4,
 \]
 where
 we used the facts
 $u_\eta=F_\eta=0$ on the $\xi$-axis.
 So 
 $G_{vv}=2$ holds on the singular curve.
 Differentiating 
 $\lambda^2=E G-F^2=EG$
 by $v$ twice, and 
 using the fact that $G_v(u,0)=0$,
 we get 
 $\lambda_v(u,0)=1$,
proving the assertion.
\end{proof}

\begin{Def}\label{rmk:U}
Let $\{(U_\alpha;u_\alpha,v_\alpha,\lambda_\alpha)\}_{\alpha\in \Lambda}$
be a family of quadruple 
satisfying \eqref{eq:star}.
An adapted coordinate system 
$(u,v):=(u_\alpha,v_\alpha)$
centered at a singular point $p$
is said to be {\it compatible} with respect to
the co-orientation of $d\sigma^2$ if
$\partial \lambda_\alpha/\partial v_\alpha$
is positive at $p$.
\end{Def}

\medskip
Let $(u,v)$ and $(\xi,\eta)$ be two normalized strongly
adapted coordinate systems at an $A_2$-point $p$.
Then the property $\lambda_v(u,0)=1$
yields that
$v(\xi,0)=0$ and 
$v_\eta(\xi,0)=1$. 
Hence if the limit
\[
   \dy\lim_{v\to 0}v K(u,v)
\]
exists, it does not depend on a choice of such $(u,v)$
up to $\pm$-ambiguity,
where $K$ is the Gaussian curvature of $d\sigma^2$.
The following assertion holds:

\begin{Prop}\label{prop:lim}
 Let $(U;u,v)$ be a normalized strongly 
adapted coordinate system
 at an $A_2$-point of a given
 Kossowski metric $d\sigma^2$.
 Then the limit
 \[
    \tilde \kappa_\Pi:=\dy\lim_{v\to 0}v K(u,v)
 \]
 exists, whose absolute valued
$|\tilde \kappa_\Pi|$ 
does not depend on a choice of such $(u,v)$.
Moreover, if $d\sigma^2$ is co-oriented
and $(U;u,v)$ is compatible with respect to
the co-orientation of $d\sigma^2$,
then $\tilde \kappa_\Pi$ itself is 
determined without $\pm$-ambiguity.
\end{Prop}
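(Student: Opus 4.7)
The plan is to compute the Gaussian curvature $K$ explicitly in a normalized strongly adapted coordinate system, show that $vK$ extends continuously to the singular curve $\{v=0\}$, and then track how this limit transforms under a change between two such coordinate systems.

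For existence, I exploit the normalizations of Definition \ref{def:st}. The conditions $G(u,0)=G_u(u,0)=G_v(u,0)=0$ and $G_{vv}(u,0)=2$ let me write $G=v^2 g(u,v)$ with $g(u,0)=1$, and $E_v(u,0)=0$ (from Corollary \ref{cor:property}) gives $E_v=v\,e(u,v)$ for some smooth $e$; in particular $\lambda^2=EG=v^2 Eg$. Since $F\equiv 0$ identically, the Gauss equation \eqref{eq:egregium} collapses to
\begin{equation*}
 K=\frac{E(E_vG_v+G_u^2)+G(E_uG_u+E_v^2)}{4(EG)^2}-\frac{E_{vv}+G_{uu}}{2EG}.
\end{equation*}
Substituting the above expansions, the leading $O(1/v^2)$ contributions $e/(2v^2Eg)$ from the first fraction and $-e/(2v^2Eg)$ from the last term cancel exactly, leaving $K=v^{-1}\Phi(u,v)+O(1)$ with $\Phi$ smooth near the singular curve. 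A short further computation yields the explicit limit
\begin{equation*}
 \lim_{v\to 0}vK(u,v)=\frac{e(u,0)g_v(u,0)-2e_v(u,0)}{4},
\end{equation*}
so that $vK$ extends continuously across $\{v=0\}$ and $\tilde\kappa_\Pi$ exists in a single chart.

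For independence up to sign, let $(\xi,\eta)$ be another normalized strongly adapted coordinate system centered at the same $A_2$-point. From Proposition \ref{prop:adapted} together with the normalizations $E(u,0)=\tilde E(\xi,0)=1$, one deduces $v(\xi,0)=u_\eta(\xi,0)=0$ and $u_\xi(\xi,0)=\pm 1$; combining this with \eqref{eq:c-change} and Definition \ref{def:st}(ii), which guarantees $\lambda_v(u,0),\tilde\lambda_\eta(\xi,0)\in\{\pm 1\}$, forces $v_\eta(\xi,0)=\pm 1$. Since $K$ is a scalar, $\tilde K(\xi,\eta)=K(u(\xi,\eta),v(\xi,\eta))$; writing $\eta\tilde K=(\eta/v)\cdot vK$ with $\eta/v\to 1/v_\eta(\xi,0)=\pm 1$ as $\eta\to 0$ shows that the two limits agree up to a global sign, proving that $|\tilde\kappa_\Pi|$ is intrinsic. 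Finally, if $d\sigma^2$ is co-oriented and both charts are compatible with the co-orientation, then $\lambda_v(u,0)=\tilde\lambda_\eta(\xi,0)=+1$, and \eqref{eq:c-change} pins down $v_\eta(\xi,0)=+1$, killing the sign ambiguity and determining $\tilde\kappa_\Pi$ uniquely. The main obstacle I anticipate is the bookkeeping of the cancellation of the two $O(1/v^2)$ singular terms in the first step; once that Laurent expansion of $K$ is established, the remainder is a direct application of the chain rule combined with the identities already assembled in the excerpt.
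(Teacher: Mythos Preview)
Your existence argument follows a different route from the paper's. The paper invokes Theorem~\ref{Thm:K1} (Kossowski's smooth extension of $K\,d\hat A$) and then writes $K\lambda=(vK)(\lambda/v)$, observing that $\lambda/v$ is smooth and non-vanishing near $\{v=0\}$ because $\lambda(u,0)=0$ and $\lambda_v(u,0)=\pm1$; this immediately gives smoothness of $vK$. You instead feed the normalizations $F\equiv 0$, $G=v^2g$, $E_v=ve$ directly into \eqref{eq:egregium} and verify by hand that the $O(1/v^2)$ poles cancel. Your computation is correct and yields the explicit value $\tfrac14\bigl(e(u,0)g_v(u,0)-2e_v(u,0)\bigr)$ as a bonus (compare \eqref{eq:Pi}); the paper's argument is shorter only because the analytic work has already been absorbed into Theorem~\ref{Thm:K1}. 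Your coordinate-independence step for $|\tilde\kappa_\Pi|$ is essentially the same chain-rule bookkeeping as in the paragraph preceding the proposition.

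Your final sentence, however, contains a gap. With the $+$ sign in \eqref{eq:c-change} (as co-orientation compatibility dictates) and $\lambda_v=\tilde\lambda_\eta=+1$, one obtains $v_\eta^{\,2}\,u_\xi=1$, which forces $u_\xi=+1$ and $v_\eta^{\,2}=1$, but only $v_\eta=\pm 1$, not $v_\eta=+1$. Concretely, the change $(\xi,\eta)=(u,-v)$ is still normalized and still compatible with the co-orientation (one computes $\tilde\lambda(\xi,\eta)=-\lambda(\xi,-\eta)$, whence $\tilde\lambda_\eta(\xi,0)=\lambda_v(\xi,0)=+1$), yet $v_\eta=-1$, and under this change your own formula $\eta\tilde K=(\eta/v)\,vK$ shows the limit flips sign. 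So \eqref{eq:c-change} alone does not pin down $v_\eta=+1$ as you claim; an additional ingredient (for instance an orientation of $M^2$) is needed to exclude this residual flip.
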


\begin{proof}
 By Theorem \ref{Thm:K1},
 $K\,\lambda du\wedge dv=K\, d\hat A$
 is a smooth $2$-from on $M^2$,
 and thus
 $K\,\lambda$ is a $C^\infty$-function
 on $U$.
 Since
 $K\lambda=(v K) (\lambda/v)$,
 the facts $\lambda(u,0)=0$ 
 and $\lambda_v=1$ yield that
 $\lambda/v$  is a non-vanishing smooth
 function near the $u$-axis.
 Thus  $vK(u,v)$ is also
 a smooth function near the $u$-axis, which proves the assertion.
\end{proof}

By \cite[(2.16)]{MSUY}, we get the following assertion,
which is a refinement of \cite[Theorem 2.17]{MSUY}.
\begin{Cor} 
 Let $f:M^2\to \R^3$ be a wave front, and
 $p\in M^2$ a cuspidal edge singular point of
 $f$. Then the absolute value of $\tilde \kappa_\Pi$
 at $p$ as an $A_2$-point with
 respect to the first fundamental form of $f$
 coincides with that of
 product curvature $\kappa_{\Pi}$ 
 at $p$ defined in \cite{MSUY}.
\end{Cor}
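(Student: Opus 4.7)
The strategy is to express the intrinsic limit $\tilde\kappa_\Pi$ in a form that can be directly compared with the formula for $\kappa_\Pi$ given in \cite[(2.16)]{MSUY}. The first step is a reduction using Theorem~\ref{Thm:K1}: the $2$-form $K\,d\hat A=(K\lambda)\,du\wedge dv$ extends smoothly across the singular curve, so $K\lambda$ is a $C^\infty$-function on $U$. In a normalized strongly adapted coordinate system $(u,v)$ at $p$, Definition~\ref{def:st} yields $\lambda(u,0)=0$ and $\lambda_v(u,0)=\pm 1$, so $v/\lambda$ extends smoothly to $\pm 1$ along $\{v=0\}$. Writing $vK=(v/\lambda)(K\lambda)$ and letting $v\to 0$, one obtains
\[
 \tilde\kappa_\Pi(u) \;=\; \pm (K\lambda)(u,0),
\]
and it suffices to identify $|(K\lambda)(u,0)|$ with $|\kappa_\Pi(p)|$.

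For the wave front $f$ at the cuspidal edge $p$, Propositions~\ref{prop:exa-adms}, \ref{prop:frontal}, and \ref{prop:non-deg} guarantee that $d\sigma^2:=df\cdot df$ is a Kossowski metric, and Proposition~\ref{prop:a2a3} identifies $p$ as an $A_2$-point of $d\sigma^2$. I would then fix a normalized strongly adapted coordinate system $(u,v)$ for $d\sigma^2$ at $p$: by construction the $u$-axis maps to the cuspidal edge image and $\partial_v$ is a null direction, which is precisely the type of parametrization used in \cite{MSUY}.

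The final step is to invoke \cite[(2.16)]{MSUY}, which expresses the product curvature $\kappa_\Pi=\kappa_\nu\kappa_c$ in terms of the coefficients $E,F,G$ of the first fundamental form in coordinates of this type. Under the normalizations $E(u,0)=1$, $F\equiv 0$ and $G_{vv}(u,0)=2$, the right-hand side of (2.16) should reduce to $(K\lambda)(u,0)$ up to sign, the ambiguity being exactly the one noted in Proposition~\ref{prop:lim} (it reflects the choice of co-orientation, equivalently the sign of $\lambda_v(u,0)$). The main obstacle I foresee is this final matching step: although (2.16) in \cite{MSUY} is written using extrinsic data, the Gauss equation forces the extrinsic contributions to collapse into the intrinsic Gauss curvature once the normalization is imposed, so the identification should be a direct but notationally delicate computation rather than a conceptual difficulty.
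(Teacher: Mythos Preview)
Your approach is essentially the paper's: both arguments reduce to invoking \cite[(2.16)]{MSUY} to identify the product curvature with the limit $\tilde\kappa_\Pi$, and your first paragraph simply unpacks the content of Proposition~\ref{prop:lim} (which the paper has just proved) to rewrite $\tilde\kappa_\Pi=\pm(K\lambda)(u,0)$ before making that comparison. The paper's own proof is the single sentence ``By \cite[(2.16)]{MSUY}, we get the following assertion,'' so your write-up is a more explicit version of the same route rather than a different one.
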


As an application of the existence of 
normalized strongly adapted coordinate systems at
$A_2$-points, we can give the following characterization of
Kossowski metrics at singularities:
\begin{Prop}\label{prop:ab}
 Let $p$ be an $A_2$-point of
 a Kossowski metric $d\sigma^2$.
 Then there exists a local coordinate system $(u,v)$ centered at $p$ 
 and smooth function germs
 $\alpha$, $\beta$ at $p$ so that
 \begin{equation}\label{eq:ds}
  d\sigma^2=\bigl(1+v^2\alpha\bigr)du^2+v^2\bigl(1+v\beta\bigr)dv^2.
 \end{equation}
 Conversely, any metrics described as in \eqref{eq:ds}
 give germs of Kossowski metrics
having $A_2$-points at the origin.
\end{Prop}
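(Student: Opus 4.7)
\medskip

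For the forward direction, I would invoke Proposition~\ref{prop:Kos} to fix a normalized strongly adapted coordinate system $(u,v)$ at $p$, so that the metric takes the diagonal form $d\sigma^2 = E\,du^2 + G\,dv^2$ with $E(u,0) = 1$ and $G_{vv}(u,0) = 2$. Since $(u,v)$ is strongly adapted, Corollary~\ref{cor:property} (together with the admissibility of $d\sigma^2$) yields $E_v(u,0) = 0$ and $G_v(u,0) = 0$, and strong adaptedness also gives $G(u,0) = 0$.

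Step two is a routine application of Hadamard's lemma in the $v$-variable. The vanishing of $E - 1$ together with its first $v$-derivative along $\{v=0\}$ lets me write $E(u,v) - 1 = v^2 \alpha(u,v)$ for some smooth germ $\alpha$. Similarly, $G$ and $G_v$ vanish along the $u$-axis while $G_{vv}(u,0) = 2$, so $G(u,v) = v^2 g(u,v)$ for a smooth $g$ with $g(u,0) = 1$; writing $g = 1 + v\beta$ with $\beta$ smooth, I obtain the claimed expression \eqref{eq:ds}.

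For the converse, given a metric of the form \eqref{eq:ds}, I would first verify that it is positive semi-definite near the origin (clear for $|v|$ small since $1 + v^2\alpha > 0$ and $1 + v\beta > 0$) with singular set exactly $\{v = 0\}$, and that $\partial_v$ spans the null space along this curve. A direct computation gives
\[
    EG - F^2 = v^2 (1 + v^2 \alpha)(1 + v\beta) = v^2 h(u,v),
\]
where $h$ is smooth and positive near the origin, so $\lambda := v\sqrt{h(u,v)}$ is a smooth function with $\lambda^2 = EG - F^2$; this shows the metric is a frontal metric in the sense of Definition~\ref{def:frontal}. Admissibility follows from Corollary~\ref{cor:property} since $E_v(u,0) = G_v(u,0) = 0$ by direct differentiation. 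Non-degeneracy is verified by $\lambda_v(u,0) = \sqrt{h(u,0)} = 1 \neq 0$, so we have a Kossowski metric. Finally, the singular set $\{v = 0\}$ is a regular curve with singular direction $\partial_u$, while the null vector field $\eta = \partial_v$ is transverse to it, so the origin is an $A_2$-point in the sense of Definition~\ref{def:a2a3add}.

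I expect no serious obstacle in this proof: both directions reduce to Hadamard's lemma and the admissibility relations already established. The only point requiring minor care is the smoothness of $\sqrt{h}$ in the converse, which holds since $h$ is smooth and strictly positive near the origin.
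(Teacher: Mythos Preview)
Your proof is correct and follows essentially the same route as the paper: take a normalized strongly adapted coordinate system (Proposition~\ref{prop:Kos}) and use Taylor/Hadamard expansion in $v$ to extract the factors $v^2\alpha$ and $v^2(1+v\beta)$. The only cosmetic difference is that the paper peels off the $v$-factors in two steps (first $E-1=vA$, then $A=v\alpha$ via Lemma~\ref{lem:lambda}) rather than invoking the second-order Hadamard lemma directly, and the paper dismisses the converse with ``can be proved easily'' whereas you actually supply the verification---your version is, if anything, more complete.
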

\begin{proof}
 Let $(u,v)$ be a normalized strongly adapted coordinate system 
 at $p$ having the expression
 $d\sigma^2=E\,du^2+G\,dv^2$.
 Since $E=1$ holds on the $u$-axis,
 there exists a $C^\infty$-function germ $A$
 at $p$ such that
 $E(u,v)-1=v A(u,v)$.
 By differentiating it,
 \[
    E_v=(E-1)_v=vA_v+A
 \]
 holds.
 Since $E_v=0$ holds along the $u$-axis,
 $A=E_v-vA_v$ also vanishes
 on the $u$-axis. 
 By Lemma \ref{lem:lambda},  we can write
 $A=v\alpha$, where
 $\alpha$ is a $C^\infty$-function germ  at $p$. 
 So we get the expression
 $E=1+v^2 \alpha(u,v)$.
 On the other hand,
 by using the relations
 $G=G_v=0$, 
 there exists a $C^\infty$-function germ
 $B$ such that $G=v^2B(u,v)$.
 Since $G_{vv}(u,0)=2$, 
 we can write
 $B-1=v\beta(u,v)$, where
 $\beta$ is a $C^\infty$-function germ,
 and get the expression
 \[
    G=v^2B(u,v)=v^2\bigl(1+v\beta(u,v)\bigr),
 \]
which proves the first assertion.
 The second assertion can be proved easily.
\end{proof}
\begin{Rmk}\label{rmk:expression}
 Under the expression \eqref{eq:ds} 
 of the Kossowski metric at an $A_2$-point,
 the singular curvature $\kappa_s$ and the product 
 curvature $\tilde \kappa_\Pi$ at $p$ are given by
 \begin{align}
  \kappa_s&=-\alpha(u,0), \\
{\tilde\kappa}_\Pi&=\frac{\alpha(u,0) \beta(u,0)-3 \alpha_v(u,0)}{2}.
  \label{eq:Pi}
 \end{align}
\end{Rmk}

Using \eqref{eq:Pi}, we get the following assertion.
\begin{Cor}[An intrinsic characterization of
 cuspidal edges]\label{cor:chr}
 Let $p$ be a cuspidal edge singular point of
 a wave front $f:M^2\to \R^3$, 
 whose limiting normal curvature $\kappa_\nu$
 does not vanish at $p$.
 Then there exists a local coordinate system $(u,v)$
 centered at $p$
 such that the first fundamental form of $f$
 has the expression \eqref{eq:ds}.
 Conversely, if $\alpha,\beta$ are two
 real analytic function germs,
 then the metric $d\sigma^2$ given by
 \eqref{eq:ds}
 can be realized as a first fundamental form
 of a real analytic wave front in
 $\R^3$
 under the assumption that
 \begin{equation}
  \alpha \beta-3 \alpha_v\ne 0.
 \end{equation}
\end{Cor}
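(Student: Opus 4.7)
The plan is to handle the two directions separately, invoking the tools developed earlier in the section.

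For the forward direction, since $f$ is a wave front with a cuspidal edge at $p$, by Proposition~\ref{prop:non-deg} the induced metric $d\sigma^2=df\cdot df$ is a Kossowski metric, and by Proposition~\ref{prop:a2a3} the point $p$ is an $A_2$-point of $d\sigma^2$. Applying Proposition~\ref{prop:Kos}, I take a normalized strongly adapted coordinate system $(u,v)$ centered at $p$. By Proposition~\ref{prop:ab} (whose proof uses exactly the normalization conditions $E(u,0)=1$, $G(u,0)=G_v(u,0)=0$, $G_{vv}(u,0)=2$, $F\equiv0$), the metric in these coordinates automatically takes the desired form \eqref{eq:ds}.

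For the converse, I start with a real analytic metric $d\sigma^2=(1+v^2\alpha)\,du^2+v^2(1+v\beta)\,dv^2$ satisfying $\alpha\beta-3\alpha_v\neq0$ at the origin. By the second half of Proposition~\ref{prop:ab}, $d\sigma^2$ is a Kossowski metric having an $A_2$-point at the origin, and since $\alpha,\beta$ are real analytic the metric itself is real analytic. To apply the realization theorem of Proposition~\ref{prop:a2a3} (whose analytic content is Fact~\ref{fact:K}), I must verify $K\,d\hat A\neq0$ at the origin. From $d\sigma^2=E\,du^2+G\,dv^2$ with $E=1+v^2\alpha$ and $G=v^2(1+v\beta)$ one computes $\lambda^2=EG$ with $\lambda_v(u,0)=1$, so $(u,v)$ is a normalized strongly adapted coordinate system. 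By Proposition~\ref{prop:lim} and the identity $K\lambda=(vK)(\lambda/v)$, we have
\[
K\lambda\big|_{v=0}
 =\lim_{v\to0}(vK)\cdot\lim_{v\to0}(\lambda/v)
 =\tilde\kappa_\Pi\cdot\lambda_v(u,0)
 =\tilde\kappa_\Pi.
\]
By formula \eqref{eq:Pi} in Remark~\ref{rmk:expression}, this equals $(\alpha\beta-3\alpha_v)/2$, which is nonzero at the origin by hypothesis. Hence $K\,d\hat A=(K\lambda)\,du\wedge dv$ does not vanish at $p$, and the converse direction of Proposition~\ref{prop:a2a3} produces a real analytic wave front with cuspidal edge realizing $d\sigma^2$.

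The only part that is not a direct assembly is the verification $K\,d\hat A\neq0$; this is the key step, and I expect the main subtlety to lie in correctly identifying $\tilde\kappa_\Pi$ with the boundary value of $K\lambda$, which requires the normalization $\lambda_v(u,0)=1$ provided by the coordinate form \eqref{eq:ds} itself. Everything else reduces to citing Propositions~\ref{prop:a2a3}, \ref{prop:Kos}, \ref{prop:ab}, and Remark~\ref{rmk:expression}.
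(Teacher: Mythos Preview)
Your proof is correct and follows the same overall strategy as the paper: the forward direction is exactly Proposition~\ref{prop:ab} applied to the induced Kossowski metric at an $A_2$-point, and the converse reduces to checking $K\,d\hat A\neq 0$ so that Fact~\ref{fact:K} (via Proposition~\ref{prop:a2a3}) yields a real analytic wave front with a cuspidal edge.

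The only noteworthy difference is in how $K\,d\hat A\neq 0$ is verified. The paper appeals to \cite{MSUY} for the equivalence $\kappa_\Pi\neq 0 \Leftrightarrow K\,d\hat A\neq 0$ and then invokes \eqref{eq:Pi}. You instead stay internal to the paper: using that the coordinates of \eqref{eq:ds} are already normalized strongly adapted with $\lambda_v(u,0)=1$, you combine Proposition~\ref{prop:lim} with \eqref{eq:Pi} to compute $K\lambda|_{v=0}=\tilde\kappa_\Pi=(\alpha\beta-3\alpha_v)/2$ directly. This is a small but genuine improvement in self-containment, since in the converse direction no wave front (and hence no $\kappa_\Pi$ in the sense of \cite{MSUY}) is available a priori; your route avoids that awkwardness.
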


\begin{proof}
In \cite{MSUY},
it was shown that
$\kappa_\Pi\ne 0$ is equivalent to
the condition $K d\hat A\ne 0$.
So Fact \ref{fact:K} and
\eqref{eq:Pi} give the conclusion.
\end{proof}

A refinement of Corollary \ref{cor:chr}
is given in \cite{NUY}, where
the ambiguity of such a realization
is discussed and a normalization theorem
of generic cuspidal edges 
is given by the use of this ambiguity.

\section{Gauss-Bonnet formulas for Kossowski metrics}
\label{sec:GB}

Let $M^2$ be an oriented $2$-manifold.
A vector bundle $\E$ of rank $2$
with a metric $\inner{~}{~}$ and a metric connection
$D$ is called a {\em coherent tangent bundle\/}
if there is a bundle homomorphism
\[
   \psi\colon{}TM^2\longrightarrow \E
\]
such that
\begin{equation}\label{eq:c}
    D_{X}\psi(Y)-D_{Y}\psi(X)=\psi([X,Y]) 
\end{equation}
holds for all vector fields $X,Y$ on $M^2$ 
(cf.\ \cite{SUY2} and \cite{SUY3}).

In this setting, 
the pull-back of the metric $d\sigma^2:=\psi^*\inner{~}{~}$ is called 
the {\em first fundamental form\/} of $\psi$.
A point $p\in M^2$ is called a {\it singular point\/} 
(of $\psi$) if $\psi_p\colon{}T_pM^2\to\E_p$ is not a bijection,
where $\E_p$ is the fiber of $\E$ at $p$. 
The singular points of $\psi$ are the singular points of $d\sigma^2$.

The vector bundle $\E$ is called {\it orientable\/} 
if there exists a smooth non-vanishing skew-symmetric bilinear section 
$\mu$ of $M^2$ into $\E^*\wedge\E^*$
such that $\mu(e_1,e_2)=\pm 1$ for any orthonormal frame $\{e_1,e_2\}$
on $\E$, where $\E^*\wedge\E^*$
denotes the determinant line bundle of the dual bundle $\E^*$.
The form $\mu$ is uniquely  determined up to $\pm$-ambiguity.
An {\em orientation\/} of the coherent tangent bundle $\E$ is a choice
of $\mu$.
A frame $\{e_1,e_2\}$ is called {\em positive\/} 
with respect to the orientation $\mu$
if $\mu(e_1,e_2)=1$.

\begin{Thm}\label{thm:coherent}
 Let $d\sigma^2$ be a Kossowski metric
 defined on a $2$-manifold $M^2$ without boundary.
 Then there exists a coherent tangent bundle
 \[
    \psi:TM^2\longrightarrow (\E, \inner{~}{~}, D)
 \]
 such that the first fundamental form
 induced by $\psi$ coincides with
 $d\sigma^2$.
 Moreover, $\E$ is orientable if
 $d\sigma^2$ is co-orientable.
\end{Thm}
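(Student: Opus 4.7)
The plan is to build $\E$ by patching local ``orthonormal'' models on an adapted atlas and to define the metric connection $D$ by imposing \eqref{eq:c}; Lemma~\ref{lem:lambda} will handle every smoothness issue across the singular set $\Sigma$. Cover $M^2$ by charts $\{(U_\alpha;u_\alpha,v_\alpha)\}_{\alpha\in\Lambda}$ that are adapted (Definition~\ref{def:adms-coord}) at every singular point of $U_\alpha$. On each $U_\alpha$ set $\E_\alpha:=U_\alpha\times\R^2$ with its standard basis $\{\vect{f}_1^\alpha,\vect{f}_2^\alpha\}$ and the Euclidean metric, and define
\[
\psi_\alpha(\partial_{u_\alpha}):=\sqrt{E_\alpha}\,\vect{f}_1^\alpha,\qquad
\psi_\alpha(\partial_{v_\alpha}):=\frac{F_\alpha}{\sqrt{E_\alpha}}\,\vect{f}_1^\alpha+\frac{\lambda_\alpha}{\sqrt{E_\alpha}}\,\vect{f}_2^\alpha,
\]
where $\lambda_\alpha$ is a chosen sign of $\sqrt{E_\alpha G_\alpha-F_\alpha^2}$. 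Adaptedness forces $E_\alpha(p)>0$ at singular points, so $\psi_\alpha$ is smooth; the identity $\psi_\alpha^*\langle\cdot,\cdot\rangle=d\sigma^2$ is a direct check.

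The main obstacle is gluing these local models smoothly across $\Sigma$. Writing $M_\gamma$ for the matrix of $\psi_\gamma$ relative to the coordinate basis, and $J_{\beta\alpha}$ for the Jacobian of $(u_\beta,v_\beta)$ with respect to $(u_\alpha,v_\alpha)$, on $U_\alpha\cap U_\beta\setminus\Sigma$ the transition function is the $O(2)$-valued map $g_{\alpha\beta}=M_\beta J_{\beta\alpha} M_\alpha^{-1}$ (it lies in $O(2)$ because $\psi_\gamma$ is an isometry on the regular set). The apparent pole comes from the $1/\lambda_\alpha$ in the second column of $M_\alpha^{-1}$; but that column, when acted on by $M_\beta J_{\beta\alpha}$, represents the $\psi_\beta$-image of the smooth vector field $\tilde{\vect{e}}_2^\alpha$ from \eqref{eq:orthonormal}, which vanishes on $\Sigma$. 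Lemma~\ref{lem:lambda} then extracts a smooth factor $\lambda_\alpha$ that cancels the denominator, so $g_{\alpha\beta}$ extends smoothly to all of $U_\alpha\cap U_\beta$, and its $O(2)$-valuedness persists by continuity. The cocycle condition is automatic on the dense open set $M^2\setminus\Sigma$, so the $\E_\alpha$ glue into a global rank-$2$ metric bundle $\E$ equipped with a global homomorphism $\psi:TM^2\to\E$ satisfying $\psi^*\langle\cdot,\cdot\rangle=d\sigma^2$.

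To construct $D$, write it locally as $D\vect{f}_1^\alpha=\omega_\alpha\otimes\vect{f}_2^\alpha$, $D\vect{f}_2^\alpha=-\omega_\alpha\otimes\vect{f}_1^\alpha$ (so that it is automatically metric-compatible), and impose \eqref{eq:c}. Evaluating \eqref{eq:c} at $X=\partial_{u_\alpha}$, $Y=\partial_{v_\alpha}$ (so $[X,Y]=0$) forces, after dropping the $\alpha$-subscript for readability,
\[
\omega_\alpha(\partial_{u_\alpha})=\frac{2E F_u-F E_u-E E_v}{2E\lambda},
\]
whose numerator vanishes on $\Sigma$ by the admissibility identities $F=0$ and $E_v=2F_u$ of Proposition~\ref{prop:property}. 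Lemma~\ref{lem:lambda} again cancels the factor $\lambda$, and an analogous expression handles $\omega_\alpha(\partial_{v_\alpha})$; this is in essence the same smoothness mechanism used for $\omega$ in the proof of Theorem~\ref{Thm:K1}. On $M^2\setminus\Sigma$, metric compatibility and \eqref{eq:c} together make $D$ the $\psi$-transport of the Levi-Civita connection of $d\sigma^2$, so the locally defined $D_\alpha$ agree on overlaps off $\Sigma$, and hence everywhere by continuity, producing a global metric connection satisfying \eqref{eq:c}.

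For the orientability claim, suppose $d\sigma^2$ is co-oriented and choose signs of $\lambda_\alpha$ so that \eqref{eq:star} holds. Comparing the signed area forms under the coordinate change yields $\lambda_\alpha=\lambda_\beta\det J_{\beta\alpha}$, and a direct computation gives $\det M_\gamma=\lambda_\gamma$, so $\det g_{\alpha\beta}=(\lambda_\beta/\lambda_\alpha)\det J_{\beta\alpha}=1$. Hence $g_{\alpha\beta}\in SO(2)$ and $\E$ is orientable.
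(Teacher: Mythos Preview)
Your proof is correct and follows essentially the same route as the paper's: local orthonormal trivializations via your $M_\alpha$ (the paper's $\T_\alpha^{-1}$), smooth extension of the $\O(2)$-valued transition functions across $\Sigma$ via Lemma~\ref{lem:lambda}, and the connection determined by metric compatibility together with \eqref{eq:c} (this is exactly the paper's $\omega_\alpha$ from \eqref{eq:omega}, whose smoothness mechanism you correctly invoke). One small wording issue: $\tilde{\vect{e}}_2^\alpha$ itself is null but nowhere zero on $\Sigma$---what vanishes there is its $\psi_\beta$-image, so the second column of $g_{\alpha\beta}$ is $\psi_\beta(\tilde{\vect{e}}_2^\alpha)/\lambda_\alpha$, and Lemma~\ref{lem:lambda} then applies componentwise as you intend.
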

\begin{proof}
 Let 
 $\{(U_\alpha;u_\alpha,v_\alpha)\}_{\alpha\in \Lambda}$
 be a covering of $M^2$ consisting of
 local adapted coordinate systems which are compatible
 with respect to the orientation of $M^2$.
 Since $d\sigma^2$ is a Kossowski metric,
 there exists a $C^\infty$-function 
 $\lambda_\alpha$ on $U_\alpha$ ($\alpha\in \Lambda$)
 such that
 \[
     E_\alpha G_\alpha-(F_\alpha)^2=(\lambda_\alpha)^2,
 \]
 where 
 $d\sigma^2=E_\alpha du_\alpha^2+2F_\alpha du_\alpha dv_\alpha+G_\alpha
 dv^2_\alpha$ on $U_\alpha$.

 We fix two indices $\alpha,\beta\in \Lambda$
 so that $U_\alpha\cap U_\beta\ne \emptyset$, and
 set
 \[
     d\sigma^2=E\,du^2+2F\,du\,dv+G\,dv^2
              =\tilde E\,d\xi^2+2\tilde F\,d\xi\,d\eta
              +\tilde G\, d \eta^2,
 \]
 where we set
\[
    (U;u,v):=(U_\alpha;u_\alpha,v_\alpha),\qquad
    (V;\xi,\eta):=(U_\beta;u_\beta,v_\beta)
 \]
for the sake of simplicity.
 We now set
 \begin{align*}
    (\vect{e}_1,\vect{e}_2)
      &=(\partial_u,\partial_v)\T_\alpha,
      \qquad
      \T_\alpha:=\pmt{1/\sqrt{E} & -F/(\lambda\sqrt{E})\\
                          0      &  \sqrt{E}/\lambda},\\
    (\tilde{\vect{e}}_1,\tilde{\vect{e}}_2)
      &=(\partial_{\xi},\partial_{\eta})\T_{\beta},
     \qquad
     \T_\beta:=
     \pmt{1/\sqrt{\tilde E} & -\tilde F/(\tilde \lambda\sqrt{\tilde E})\\
         0      &  \sqrt{\tilde E}/\tilde \lambda},
 \end{align*}
 where
 $\lambda:=\lambda_\alpha$ and $\tilde\lambda:=\lambda_\beta$
(cf.\ \eqref{eq:orthonormal}).
 Then $(\vect{e}_1,\vect{e}_2)$ and
 $(\tilde{\vect{e}}_1,\tilde{\vect{e}}_2)$ 
 are orthonormal frame fields
 on $U\setminus \Sigma$ and $V\setminus \Sigma$
 respectively, where $\Sigma$ denotes the singular set of
 the metric $d\sigma^2$ on $M^2$.
 It holds on $(U\cap V)\setminus \Sigma$
 that
 \begin{equation}\label{ee}
 (\tilde{\vect{e}}_1,\tilde{\vect{e}}_2)=
  (\vect{e}_1,\vect{e}_2)\T_\alpha^{-1} \J_{\alpha\beta}
                \T_\beta,
         \qquad
	 \J_{\alpha\beta}:=
	 \pmt{u_\xi & u_\eta \\ v_\xi & v_\eta}.
 \end{equation}
 In particular,
 \begin{equation}\label{gab}
    g_{\alpha\beta}:=
      \T_\alpha^{-1} \J_{\alpha\beta}\T_\beta
 \end{equation}
 can be considered as a matrix valued function
 defined on $(U\cap V)\setminus \Sigma$
 which takes values in the orthogonal group
 $\O(2)$.
 Since two local coordinate systems $(u,v)$ and
$(\xi,\eta)$ are adapted,
 \eqref{gab} can be reduced to 
 \[
 \sqrt{E\tilde E}\,g_{\alpha\beta}
 =
 \left(
 \begin{array}{cc}
  E u_\xi+F v_\xi & 
   -E (\tilde F/\tilde \lambda) u_\xi+
   E \tilde E (u_\eta/\tilde \lambda)-F (\tilde F/\tilde \lambda) v_\xi
   +(F/\tilde \lambda) \tilde E v_\eta \\
  v_\xi \lambda  & (\tilde E v_\eta-\tilde F v_\xi) (\lambda/\tilde \lambda) \\
 \end{array}
 \right).
 \]
 Here, by \eqref{eq:u2} and Lemma~\ref{lem:lambda},
 \[
    \tilde F/\tilde \lambda,\quad u_\eta/\tilde \lambda,\quad F/\tilde \lambda,
    \quad \lambda/\tilde \lambda
 \]
 are smooth functions on $U\cap V$.
 Since $E\tilde E>0$ on $U\cap V$, we can conclude that
 $g_{\alpha\beta}$ can be extended as a smooth map
 \[
    g_{\alpha\beta}:U_\alpha\cap U_\beta\to \O(2). 
 \]
 Since $g_{\alpha\beta}$ is a transition function
 of the restriction of vector bundle $TM^2$
 into $M^2\setminus \Sigma$, the co-cycle condition
 \begin{equation}\label{eq:cocycle}
  g_{\alpha\beta}g_{\beta\gamma}g_{\gamma\alpha}=\id
 \end{equation}
 holds on $M^2\setminus \Sigma$.
 By the continuity, \eqref{eq:cocycle} holds on the
 whole of $M^2$. Thus, there exists a vector bundle $\mathcal E$
 with inner product $\inner{~}{~}$
 whose transition functions are $\{g_{\alpha\beta}\}$,
 namely, there exist smooth orthonormal frame fields
 $\Gamma_\alpha$  of $\E$
 ($\alpha\in \Lambda$)
 satisfying
 \begin{equation}\label{gamma0}
  \Gamma_\beta=\Gamma_\alpha g_{\alpha\beta}
   \qquad (\alpha,\beta\in \Lambda).
 \end{equation}
 By \eqref{eq:omega},
 we can get a smooth $1$-form 
 $\omega_{\alpha}:= \omega$ on $U_\alpha$.
 Since $\pmt{0 & \omega_\alpha \\ -\omega_\alpha & 0}$
 is a usual connection form of the
 Levi-Civita connection of $d\sigma^2$,
 the identity
 \begin{equation}\label{eq:conn}
  \pmt{0 & \omega_\beta \\ -\omega_\beta & 0}=
   g_{\alpha\beta}^{-1}(dg _{\alpha\beta})
   +g_{\alpha\beta}^{-1}\pmt{%
   0 & \omega_\alpha \\ -\omega_\alpha & 0}
   g_{\alpha\beta}
 \end{equation}
 holds on $(U\cap V)\setminus \Sigma$.
 Then by the continuity of $\{\omega_\alpha\}$ and $\{g_{\alpha\beta}\}$,
 \eqref{eq:conn} holds on $U\cap V$.
 For each $\alpha$, we set
 \[
    D_{w}\hat e_1:=-\omega_\alpha(w)\hat e_2,\qquad
    D_{w}\hat e_2:=\omega_\alpha(w)\hat e_1\qquad (w\in TU),
 \]
 where $\Gamma_\alpha=(\hat e_1,\hat e_2)$.
 By \eqref{eq:conn}, $D$ gives a globally defined
 metric connection of $\E$.

 We now define a bundle homomorphism
 $\psi:TM^2|_{M^2\setminus \Sigma}\to \E|_{M^2\setminus \Sigma}$
 by
 \[
   \psi(\vect{e}_1)=\hat e_1,\qquad  
   \psi(\vect{e}_2)=\hat e_2,
 \]
 where $TM^2|_{M^2\setminus \Sigma}$
 (resp.\ $\E|_{M^2\setminus \Sigma}$)
 is the restriction $TM^2$ (resp.\ $\E$)
 to $M^2\setminus \Sigma$.
 By \eqref{ee}, \eqref{gab} and \eqref{gamma0},
 it can be easily checked that the definition of 
$\psi$ does not depend
 on the choice of the index  $\alpha$.
 Moreover, the definition of $\psi$ yields that
 $(\psi(\partial_u),\psi(\partial_v))=\Gamma_\alpha
 \T_{\alpha}^{-1}$.
 Since
 \[
     \T^{-1}_{\alpha}=\pmt{\sqrt{E} & F/\sqrt{E} \\
                        0     & \lambda/\sqrt{E}}
 \]
 is a matrix-valued $C^\infty$-function on $U=U_\alpha$,
$\psi$ can be smoothly extended as a bundle homomorphism
 $\psi:TM^2\to \E$.
The transition functions
 $g_{\alpha\beta}$ and the connection forms
$\omega_{\alpha}$ are common in
two vector bundles
$TM^2|_{M^2\setminus \Sigma}$ and
$\E|_{M^2\setminus \Sigma}$.
Moreover,
$D$ can be identified 
with the Levi-Civita connection 
of the metric $d\sigma^2$
on 
$M\setminus \Sigma$ by $\psi$,
since
$\{\omega_{\alpha}\}_{\alpha\in \Lambda}$
consists of  the connection forms of the Levi-Civita
connection.
Hence
it satisfies \eqref{eq:c} on $M^2\setminus \Sigma$.
Then by the continuity, \eqref{eq:c} holds on $M^2$, 
which proves $(\E, \inner{~}{~}, D)$ 
is a coherent tangent bundle.

 By the definition of $\psi$,
 $d\sigma^2$ is the pull-back metric of $\inner{~}{~}$
 by $\psi$ on $M^2\setminus\Sigma$, and the continuity of $\psi$
 implies that $d\sigma^2$ is the first fundamental form of
$\psi$ on $M^2$.
 If $d\sigma^2$ is co-orientable,
 one can choose the family $\{\lambda_\alpha\}$
 so that $\lambda_\alpha \lambda_\beta$
 takes the same sign as $\det(\J_{\alpha\beta})$,
 which implies that
 the determinant of $g_{\alpha\beta}$ given by
 \eqref{gab} is positive. 
 Hence
 \[
    \mu:=\sgn(\lambda_{\alpha})\hat e_1\wedge \hat e_2
 \]
 does not depend on the choice of the index $\alpha$,
 and gives a non-vanishing section of $M^2$
 into $\E^*\wedge \E^*$.
\end{proof}

As a corollary of Theorem \ref{thm:coherent},
we get the following two Gauss-Bonnet formulas:

\begin{Prop} 
 Let $d\sigma^2$ be a Kossowski metric on
 a compact orientable $2$-manifold $M^2$ without boundary.
 Suppose that $d\sigma^2$ admits at most $A_2$ or
 $A_3$-singularities.
 Then 
 its Gaussian curvature $K$ satisfies
 \begin{equation}\label{eq:GB1}
  2\pi\chi(M^2)
   =\int_{M^2}K\,dA+2\int_{\Sigma}\!\kappa_s\, d\tau,
 \end{equation}
 where $\Sigma$ denotes the singular set of the metric
 $d\sigma^2$, and $\kappa_s$ is the singular curvature
 defined by \eqref{eq:ks}, and
$\tau$ is the arclength parameter of the singular curve.
\end{Prop}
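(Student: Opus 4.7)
The plan is to reduce the proposition to the Gauss--Bonnet-type formula for coherent tangent bundles established in \cite{SUY2,SUY3} (see also \cite{SUY}). Theorem \ref{thm:coherent} supplies, from the Kossowski metric $d\sigma^2$, a coherent tangent bundle $\psi\colon TM^2\to \E$ whose first fundamental form recovers $d\sigma^2$ and whose singular set coincides with $\Sigma$. On $M^2\setminus\Sigma$, the connection forms $\omega_\alpha$ produced in the proof of Theorem \ref{thm:coherent} are the Levi--Civita connection forms of $d\sigma^2$, so the curvature of $\E$ there equals the intrinsic Gaussian curvature $K$ of $d\sigma^2$. Thus the bulk term $\int_{M^2}K\,dA$ on both sides of \eqref{eq:GB1} matches automatically.

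Next I would verify the singular-point dictionary. The bundle-theoretic notions of singularity of the first/second kind used in \cite{SUY2,SUY3} are formulated purely in terms of the null direction $\eta$ and the singular direction $\dot\gamma$, and therefore translate, via Theorem \ref{thm:coherent}, into precisely the $A_2$/$A_3$ dichotomy of Definition \ref{def:a2a3add}. Moreover, in a normalized strongly adapted coordinate system (Proposition \ref{prop:Kos}), the intrinsic formula \eqref{eq:ks} for $\kappa_s$ reduces to the expression for the singular curvature used in \cite{SUY,SUY2,SUY3} when computed with respect to the orthonormal frame \eqref{eq:orthonormal} carried by $\E$. Granting these two identifications, the Gauss--Bonnet theorem for coherent tangent bundles, applied to $\psi$, is exactly the claim \eqref{eq:GB1}. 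The factor $2$ in front of the $\Sigma$-integral appears because, when one applies the ordinary Gauss--Bonnet theorem on $M^2$ with a thin tubular neighborhood of $\Sigma$ excised, the two boundary components (one on each side of $\Sigma$) both contribute geodesic-curvature integrals whose common limit is $\int_\Sigma\kappa_s\,d\tau$.

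The main obstacle is the integrability of $\kappa_s\,d\tau$ near an $A_3$-point, where $\kappa_s$ is unbounded. This is handled as in \cite{SUY}: working in a strongly adapted coordinate system centered at the swallowtail one shows $\kappa_s=O(\tau^{-1/2})$ with respect to the arclength parameter $\tau$, so that $\kappa_s\,d\tau$ is absolutely integrable and the isolated swallowtail points produce no additional contribution beyond the (convergent) line integral along $\Sigma$. Once integrability is secured, \eqref{eq:GB1} follows verbatim from the coherent-tangent-bundle version of Gauss--Bonnet.
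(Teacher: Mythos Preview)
Your proposal is correct and follows essentially the same route as the paper: construct the coherent tangent bundle via Theorem~\ref{thm:coherent}, match the $A_2$/$A_3$ dichotomy and the expression \eqref{eq:ks} for $\kappa_s$ with their bundle-theoretic counterparts, and then invoke the second identity of \cite[Theorem~B]{SUY2}. Two small points to tighten: strongly adapted coordinates exist only at $A_2$-points, so near a swallowtail you should work in merely adapted coordinates (the paper sidesteps this by citing that $\kappa_s\,d\tau$ extends as a smooth $1$-form on $\Sigma$, cf.\ \cite[Proposition~2.11]{SUY2}), and since only $M^2$---not $d\sigma^2$---is assumed orientable, one must pass to a double cover to ensure $\E$ is orientable before applying \cite[Theorem~B]{SUY2}.
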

For compact wave fronts in $\R^3$, this formula
was shown in Kossowski \cite{K2}. 
(The singular curvature $\kappa_s$ is not defined in \cite{K2}.
Kossowski treated $\kappa_s\,d\tau$ as a differential form.)
\begin{proof}
 Let $(\E, \langle~,~\rangle, D)$ be the coherent tangent bundle
 induced by the Kossowski metric $d\sigma^2$ as in
 Theorem~\ref{thm:coherent}.
 Then, the singular curvature function $\kappa_s$
 is defined by \cite[(1.7)]{SUY2}, and one can easily check that
 it has the expression \eqref{eq:ks} which can be proved
 by modifying the proof of \cite[Proposition 1.8]{SUY}.
 Moreover, the proof of \cite[Proposition 2.11]{SUY2}
 implies that $\kappa_s\,d\tau$ gives a $C^\infty$-form on
 $1$-dimensional manifold $\Sigma$. 
 Thus \eqref{eq:GB1} holds by applying by the second identity of %
 \cite[Theorem B]{SUY2}.
 We remark that the proof of \cite[Theorem B]{SUY2}
 is given under the assumption that $\E$ is orientable.
 However, taking a double covering of $M^2$ if necessary,
 we may assume that $\E$ is orientable.
 Since the unsigned area element $dA$ is invariant under the
 covering transformation, we get the identity without assuming the
 orientability of $\E$.
\end{proof}

\begin{Prop} 
 Let $d\sigma^2$ be a co-oriented
Kossowski metric on
 a compact oriented $2$-manifold $M^2$ without boundary.
 Suppose that $d\sigma^2$ admits at most $A_2$ and
 $A_3$-singularities.
 Then the following identity holds:
 \begin{equation}\label{eq:Euler}
  \chi^{}_{\E}:=\frac{1}{2\pi}\int_{M^2}K\, d\hat A=
        \chi(M_+)-\chi(M_-)
	+\#S_+-\#S_- ,
 \end{equation}
 where $\chi^{}_{\E}$ is the Euler characteristic 
of the  oriented coherent tangent vector bundle 
 $(\E, \inner{~}{~}, D)$ 
 associated to $d\sigma^2$, 
 $M_+$ {\rm(}resp.\ $M_-${\rm)} is the subset where
 $d\hat A$ is positively {\rm(}resp.\ negatively{\rm)}
 proportional to $dA$,
 and  $\#S_+$ {\rm(}resp.\ $\#S_-${\rm)} is the
 number of positive {\rm(}resp. negative{\rm)}
 $A_3$-points%
\footnote{
 An $A_3$-point $p$ of a given Kossowski metric $d\sigma^2$
 is called {\it positive} (resp. {\it negative})
 if the interior angle of $M_+$ (resp. $M_-$)
 at $p$ is $2\pi$. (In this case,
 the interior angle of $M_-$ (resp. $M_+$)
 at $p$ is zero.)
}.
\end{Prop}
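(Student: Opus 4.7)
The plan is to apply Theorem~\ref{thm:coherent} to the co-oriented Kossowski metric $d\sigma^2$ to obtain an oriented coherent tangent bundle $(\E,\inner{~}{~},D)$ on the compact oriented manifold $M^2$, and then to reduce the statement to the Gauss--Bonnet-type theorem for coherent tangent bundles established in \cite{SUY2}. Co-orientability supplies the global sign choice of $\{\lambda_\alpha\}$ needed in the construction of Theorem~\ref{thm:coherent}, and as in the last paragraph of that proof this produces a nowhere-vanishing section $\mu=\sgn(\lambda_\alpha)\hat e_1\wedge\hat e_2$ of $\E^*\wedge\E^*$, so that $\E$ is oriented.

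For the first equality, the curvature $2$-form of the metric connection $D$ on $\E$ coincides with $K\,d\hat A$ on the regular set $M^2\setminus \Sigma$, and by Theorem~\ref{Thm:K1} this expression extends smoothly across $\Sigma$. The standard Chern--Weil formula for the Euler number of an oriented rank-two real vector bundle then yields $\chi^{}_{\E}=\frac{1}{2\pi}\int_{M^2}K\,d\hat A$ directly.

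For the combinatorial identity, I would count signed zeros of a generic section of $\E$ of the form $\psi(V)$, where $V$ is a smooth vector field on $M^2$ whose zeros are non-degenerate, disjoint from $\Sigma$, and such that $V$ is transverse to the null lines along $\Sigma$. On $M_+$ the bundle map $\psi$ is an orientation-preserving isomorphism and on $M_-$ it is orientation-reversing, so a Poincar\'e--Hopf argument on the manifolds with boundary $M_\pm$ contributes $\chi(M_+)-\chi(M_-)$ to the signed count. Along the regular part of $\Sigma$ (that is, away from $A_3$-points), the null line varies smoothly transverse to the singular curve; a local analysis in a strongly adapted chart, combined with Proposition~\ref{prop:property} and Corollary~\ref{cor:property}, shows that zeros of $\psi(V)$ occurring along the curve either cancel pairwise or can be pushed off after a small perturbation of $V$. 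The remaining net contributions arise only at $A_3$-points, where the null direction becomes tangent to the singular curve, and a rotation-index computation for the frame $(\psi(\partial_u),\psi(\partial_v))$ along a small loop encircling such a point produces $+1$ at a positive $A_3$-point and $-1$ at a negative one. Summing all contributions gives the claimed formula.

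The main obstacle is the local analysis at the $A_3$-points: one must reconcile the sign convention in the footnote (defined via the interior angles of $M_\pm$) with the rotation-index computation. Concretely, in a strongly adapted coordinate system one must verify that the zero set of $\lambda$ has a Morse-type critical structure at $p$ with both branches tangent to the singular direction, so that $M_+$ (resp.\ $M_-$) genuinely sweeps out an interior angle of $2\pi$ at a positive (resp.\ negative) $A_3$-point, and that the holonomy of $D$ around a small loop at $p$ has winding $\pm 1$ relative to the frame of $TM^2$ accordingly. Once this identification with the peak/valley classification of \cite{SUY2} is in place, the statement follows directly from \cite[Theorem B]{SUY2} applied to $(\E,\inner{~}{~},D)$; because co-orientability already makes $\E$ orientable, the double-covering device used in the proof of the preceding proposition is not needed here.
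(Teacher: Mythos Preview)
Your proposal lands on exactly what the paper does: once Theorem~\ref{thm:coherent} furnishes the oriented coherent tangent bundle $(\E,\inner{~}{~},D)$, the identity is just the first formula of \cite[Theorem~B]{SUY2} applied to that bundle, together with the identification of positive/negative $A_3$-points with the peak/null-type singular points in \cite{SUY2}. The paper's proof is a single sentence to that effect.

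Everything else in your write-up is extra. Two points to tighten:
\begin{itemize}
\item In the statement, $\chi^{}_{\E}$ is \emph{defined} as $\frac{1}{2\pi}\int_{M^2}K\,d\hat A$, so your Chern--Weil paragraph is not proving anything that is asked; it is (correctly) observing that this definition agrees with the topological Euler number of $\E$, which is a side remark rather than part of the proof.
\item Your Poincar\'e--Hopf sketch is effectively an outline of a reproof of \cite[Theorem~B]{SUY2}, and as written it has a real gap: from $\sum_{p\in M_+}\operatorname{ind}(V,p)-\sum_{p\in M_-}\operatorname{ind}(V,p)$ you cannot pass to $\chi(M_+)-\chi(M_-)$ without controlling the boundary behavior of $V$ along $\partial M_\pm=\Sigma$ (Poincar\'e--Hopf on a manifold with boundary needs the field to be, say, outward-pointing). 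Transversality of $V$ to the null line along $\Sigma$ does not by itself give this, so the claimed cancellation of contributions along the $A_2$ part of $\Sigma$ is not justified. This is precisely the work packaged inside \cite[Theorem~B]{SUY2}; since you cite that result anyway, the sketch should be dropped rather than repaired.
\end{itemize}
In short: keep Theorem~\ref{thm:coherent} plus the sign identification and the citation of \cite[Theorem~B]{SUY2}; delete the rest.
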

For compact wave fronts in $\R^3$, this formula
was shown in 
Langevin, Levitt and Rosenberg \cite{LLR}. 
\begin{proof}
 The identity
 \eqref{eq:GB1} holds by applying by the first identity of
 \cite[Theorem B]{SUY2}.
\end{proof}

We get the following corollary:

\begin{Cor}
 Let $d\sigma^2$ be a co-orientable Kossowski metric on
 an orientable compact $2$-manifold $M^2$ without boundary.
 Suppose that $d\sigma^2$ admits at most $A_2$ and
 $A_3$-singularities.
 Then the number of $A_3$-points is even.
\end{Cor}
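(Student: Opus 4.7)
I would combine the Euler-class formula \eqref{eq:Euler} with a topological identity $\chi(M^2)=\chi(M_+)+\chi(M_-)$ to reduce the claim to $\chi_\E\equiv 0\pmod 2$, and then verify this parity via Poincar\'e--Hopf.

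First, I would check that $\Sigma$ is a closed topological $1$-manifold. Away from $A_3$-points this is clear, and at an $A_3$-point the local model for $\Sigma$ is the cusp curve $t\mapsto(t^2,t^3)$, which is a homeomorphism onto its image (the continuous inverse is $(x,y)\mapsto y^{1/3}$). Hence $\Sigma$ is a disjoint union of topological circles and $\chi(\Sigma)=0$. A Mayer--Vietoris argument on open neighborhoods of $M_\pm$ whose intersection deformation retracts to $\Sigma$ therefore gives $\chi(M^2)=\chi(M_+)+\chi(M_-)$. Substituting this into \eqref{eq:Euler},
\[
\#S_+-\#S_- = \chi_\E-\chi(M_+)+\chi(M_-) \equiv \chi_\E+\chi(M^2) \pmod 2.
\]
Since $\chi(M^2)=2-2g$ is even for an orientable closed surface and $\#S_++\#S_-\equiv \#S_+-\#S_-\pmod 2$, the corollary reduces to showing that $\chi_\E$ is even.

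To prove $\chi_\E\equiv 0\pmod 2$, I would choose by standard transversality a smooth vector field $V$ on $M^2$ whose zeros are isolated, non-degenerate, all lying in $M^2\setminus\Sigma$, and satisfying $V(p)\notin\N_p$ for every $p\in\Sigma$; this is possible because $\N\subset TM^2|_\Sigma$ is a smooth line subbundle over the $1$-dimensional set $\Sigma$. Since $\ker\psi_p=\N_p$ on $\Sigma$ (as $d\sigma^2(v,v)=|\psi(v)|^2$) and $\psi$ is an isomorphism on $M^2\setminus\Sigma$, the section $\psi(V)$ of $\E$ has exactly the same zero set as $V$, with each zero non-degenerate because $\psi_p$ is invertible off $\Sigma$. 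At each such $p$,
\[
\op{ind}_p\psi(V) = \sgn(\det\psi_p)\cdot\op{ind}_p V \in\{\pm1\},
\]
so $\op{ind}_pV-\op{ind}_p\psi(V)\in\{-2,0,2\}$ is even. Applying Poincar\'e--Hopf on $M^2$ to $V$ and on $\E$ to $\psi(V)$,
\[
\chi(M^2)-\chi_\E = \sum_{V(p)=0}\bigl(\op{ind}_pV-\op{ind}_p\psi(V)\bigr)
\]
is a sum of even integers, hence even. Therefore $\chi_\E\equiv\chi(M^2)\equiv 0\pmod 2$, and the corollary follows.

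The main obstacle is the topological input that cusps of $\Sigma$ at $A_3$-points are genuine topological $1$-manifold points, which yields $\chi(\Sigma)=0$ and hence $\chi(M^2)=\chi(M_+)+\chi(M_-)$; the remainder is a standard transversality argument together with the elementary observation that differences of $\pm 1$-valued Poincar\'e--Hopf indices are always even.
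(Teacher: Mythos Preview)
Your overall argument is correct and, since the paper states this corollary without proof, you are actually supplying the missing details. One factual slip should be fixed, though it only works in your favor: your description of $\Sigma$ near an $A_3$-point as the cusp $t\mapsto(t^2,t^3)$ is wrong. By the definition of a Kossowski metric, $d\lambda\neq 0$ at every singular point, so $\Sigma=\{\lambda=0\}$ is a \emph{smooth} embedded curve everywhere---including at $A_3$-points---by the implicit function theorem (cf.\ the sentence before Definition~\ref{def:a2a3add}). You are confusing the singular set $\Sigma\subset M^2$ in the domain with the image $f(\Sigma)\subset\R^3$, which does acquire a cusp at a swallowtail. This makes your Mayer--Vietoris step cleaner: $\overline{M_+}$ and $\overline{M_-}$ are honest compact surfaces with common smooth boundary $\Sigma$, so $\chi(M^2)=\chi(M_+)+\chi(M_-)$ is immediate, and the ``main obstacle'' you flag is a non-issue.

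The genuine content of your proof is the Poincar\'e--Hopf step showing $\chi_\E$ is even, and it is sound: a generic vector field $V$ on $M^2$ avoids the line subbundle $\N\subset TM^2|_\Sigma$ over the $1$-dimensional set $\Sigma$ and has non-degenerate zeros off $\Sigma$; since $\ker\psi_p=\N_p$, the section $\psi(V)$ of $\E$ has the same zeros, and at each zero $p$ the linearization is $\psi_p\circ D_pV$, giving $\op{ind}_p\psi(V)=\sgn(\det\psi_p)\,\op{ind}_pV$. Thus $\chi(M^2)-\chi_\E=2\sum_{p\in M_-}\op{ind}_pV$ is even. This is presumably close to what the paper intends the reader to extract from \cite{SUY2,SUY3}; for wave fronts in $\R^3$ there is the shortcut $\chi_\E=2\deg\nu$ via $\E\cong\nu^*TS^2$, but your argument is the right one in the abstract Kossowski setting where no Gauss map is available.
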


\section{Whitney metrics and cross caps}
\label{sec:W}

Let $U$ be a neighborhood of the origin $o$ in the $uv$-plane
$\R^2$.
A $C^\infty$-map germ $f:(U,o)\to \R^3$ is called a
{\it cross cap\/} if there exists a diffeomorphism germ $\phi$
(resp.\ $\Phi$) of $\R^2$ (of $\R^3$)
such that
$\Phi\circ f\circ \phi(u,v)=(u,uv,v^2)$. 
Whitney \cite{Wh} gave a useful criterion
for this singularities.
If $(0,0)$ is a cross cap singularity of $f$, then
West \cite{W} showed that
there exists a local coordinate system $(u,v)$, and a motion
$T$ in  $\R^3$ such that
\begin{equation}\label{eq:cross}
    T\circ f(u,v)=
        \left(
	   u,
	   uv+\sum_{i=3}^n \frac{b_i}{i!}v^i,
	   \sum_{r=2}^n \sum_{j=0}^r \frac{a_{j\,r-j}}{j!(r-j)!}u^j
	   v^{r-j}
	\right) +O(u,v)^{n+1},
\end{equation}
where $a_{02}\neq 0$ and $O(u,v)^{n+1}$
is a higher order term 
(cf. \cite{W}, see also \cite{FH} and \cite{HHNUY}).
By orientation preserving coordinate changes $(u,v)\mapsto (-u,-v)$
and $(x,y,z)\mapsto (-x,y,-z)$, we may assume that
\begin{equation}\label{eq:a02}
    a_{02}>0,
\end{equation}
where $(x,y,z)$ is the usual Cartesian coordinate system of $\R^3$.
After this normalization \eqref{eq:a02}, one can regard 
all of the coefficients $a_{jk}$ and $b_i$ as
invariants of cross caps.
An oriented local coordinate system $(u,v)$ giving 
such a normal form is called the {\it canonical coordinate system\/} of 
$f$ at the cross cap singularity.
In \cite{HHNUY}, isometric deformations of cross caps
are constructed (cf.\ Figure \ref{fig:deform}),
and it was shown that the coefficients
$a_{03}$, $a_{12}$, $b_3$ 
can be changed by such deformations.

\begin{figure}[htb]
 \begin{center}
        \includegraphics[height=3.0cm]{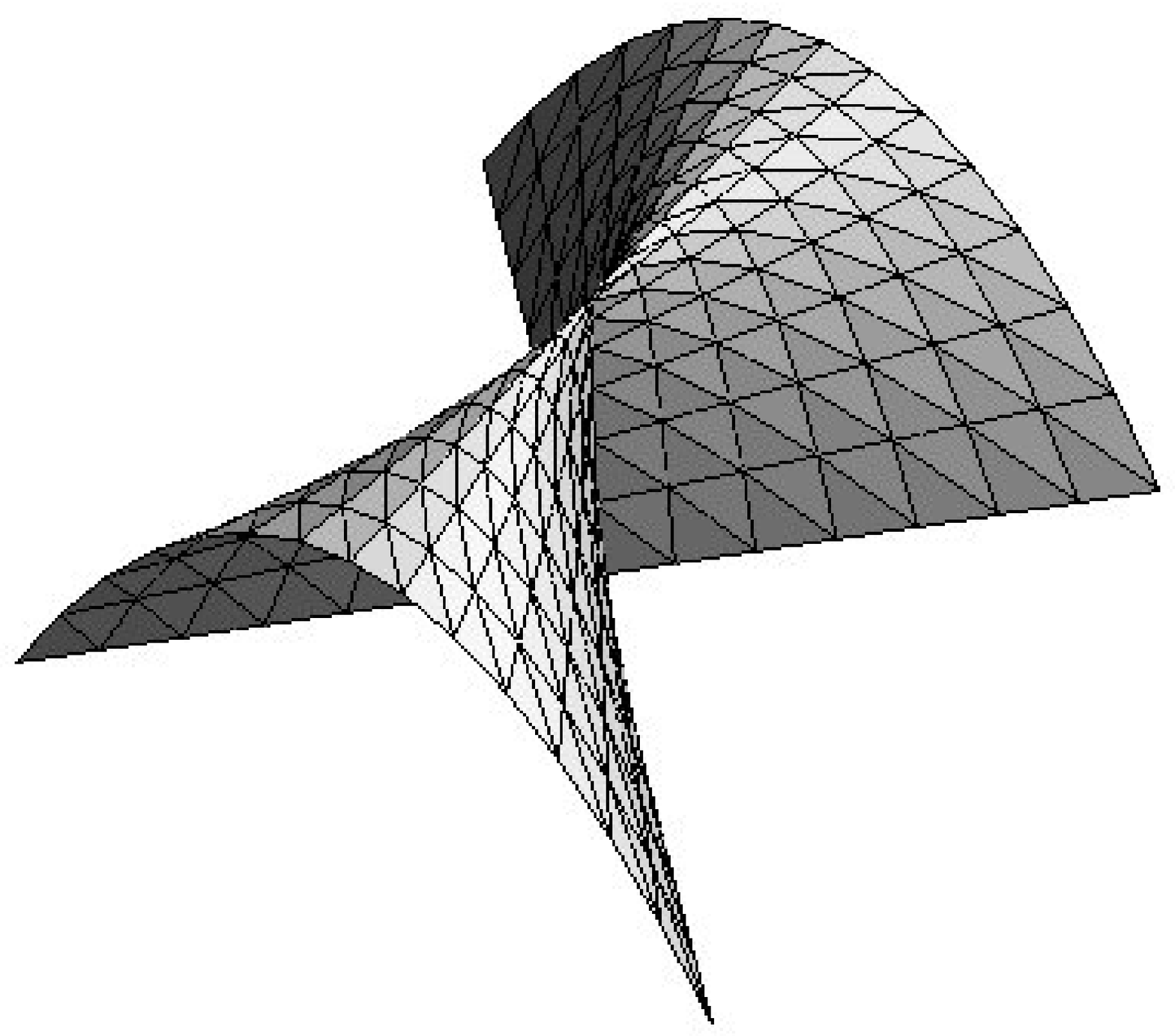}
\quad  \includegraphics[height=3.0cm]{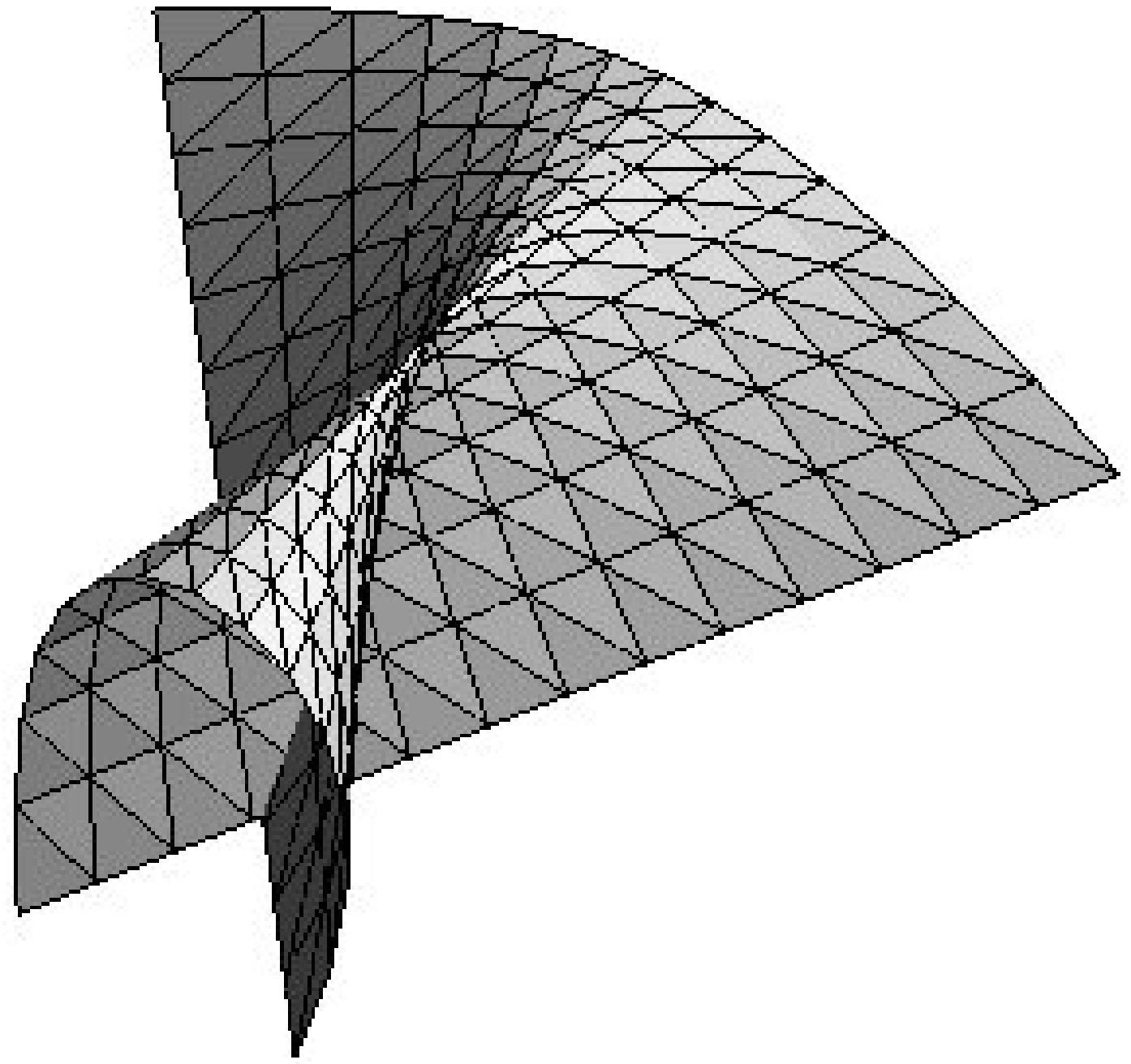}
\quad  \includegraphics[height=3.0cm]{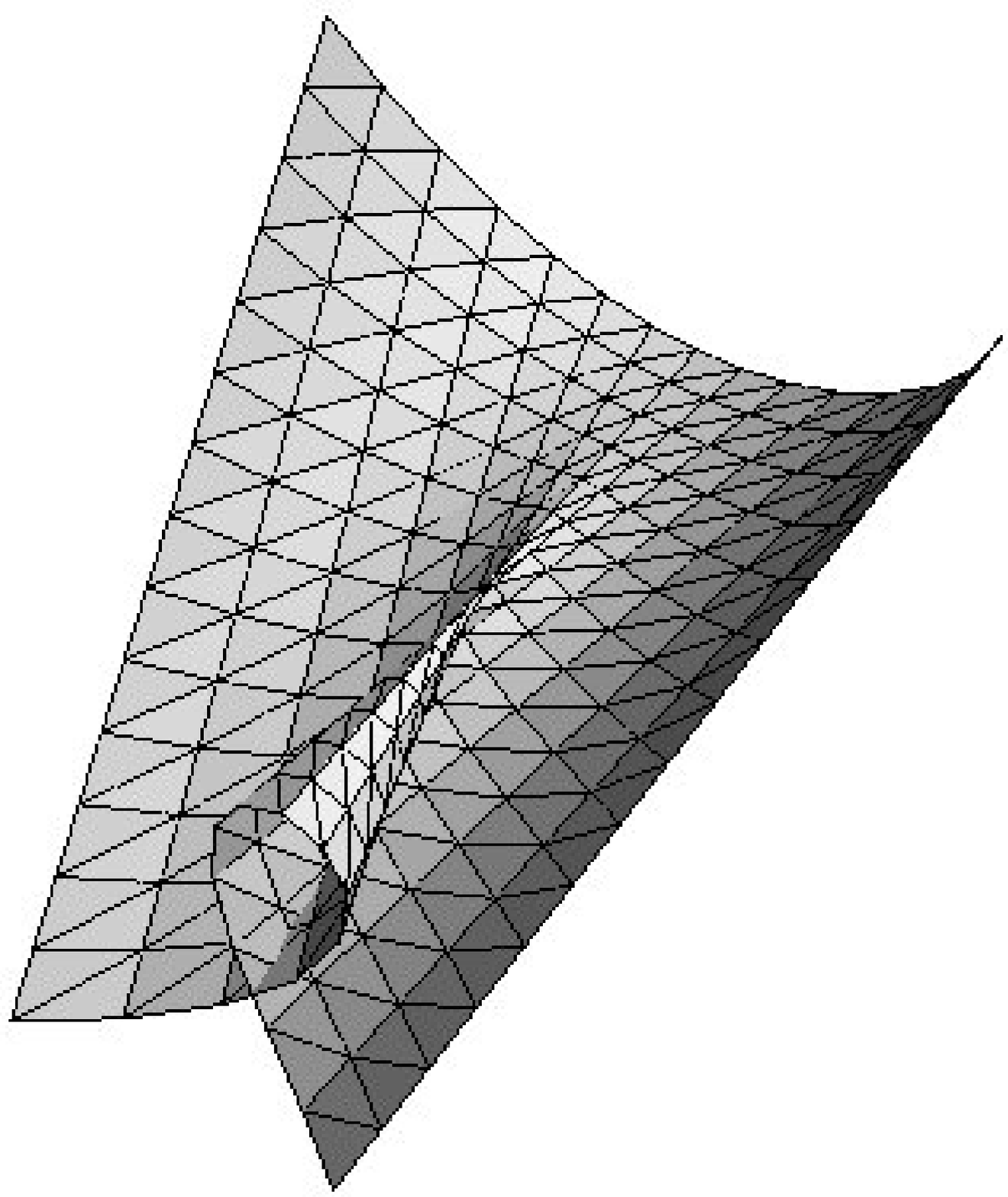}
\caption{An isometric deformation of the standard cross cap.}
\label{fig:deform}
\end{center}
\end{figure}

On the other hand, it was shown 
in \cite{HHNUY} that
$a_{02}$, $a_{20}$, $a_{11}$ 
are all intrinsic invariants.
As a refinement of this fact,
we will now define
a new class of 
semi-definite metrics
called \lq Whitney metrics\rq\,
and will reformulate
$a_{02},\,\,a_{20},\,\, a_{11}$ 
as invariants of isolated singularities
of such metrics, as follows: 
\begin{Def}\label{def:hessian}
 Let $M^2$ be a $2$-manifold, and
 $p$ a singular point of 
 an admissible (positive semi-definite) metric $d\sigma^2$
 on $M^2$ in the sense of Definition~\ref{def:adms}.
 Let $(u,v)$ be a local coordinate system
 centered at $p$ and set
 \[
    \delta:=EG-F^2,
 \]
 where $d\sigma^2=E\,du^2+2Fdu\,dv+G\,dv^2$.
 If the Hessian
 \[
    \Hess_{u,v}(\delta):=
    \det\pmt{\delta_{uu} & \delta_{uv}\\ \delta_{uv} & \delta_{vv}}
 \]
 does not vanish at $p$, then $p$ is called
 an {\it intrinsic cross cap\/} of
 $d\sigma^2$
(cf.\ Corollary~\ref{cor:delta}). 
 Moreover, if $d\sigma^2$ admits only intrinsic cross cap
 singularities, then it is called a {\it Whitney metric}.
\end{Def}

We fix an intrinsic cross cap $p$ of a given Whitney metric
$d\sigma^2$
on $M^2$. 
Let $(u,v)$ be a coordinate system as in Definition \ref{def:hessian}. 
We take  another local coordinate system
$(\xi,\eta)$ centered at $p$.
So it holds that
\begin{equation}\label{eq:J}
 \tilde \delta=J^2 \delta\qquad 
 \left(J:=\det\pmt{u_\xi & u_\eta \\ v_\xi & v_\eta}\right),
\end{equation}
where
$\tilde \delta:=\tilde E\tilde G-\tilde F^2$
and $d\sigma^2=\tilde E\,d\xi^2+2\tilde F\,d\xi\, d\eta+\tilde G\,d\eta^2$.
Since $\delta\ge 0$, 
intrinsic cross caps are
non-degenerate critical points of $\delta$, and in particular
are local
minima of $\delta$.
Hence it holds that
\begin{equation}\label{eq:delta-minima}
 \delta_u(0,0)=\delta_v(0,0)=0,\qquad\text{and}\qquad
 \delta(u,v)>0 \qquad
 \text{on $V\setminus\{p\}$}
\end{equation}
for some neighborhood $V$ of $p$.
Using these,
we have
\begin{equation}\label{eq:detJ}
 \biggl. \Hess_{\xi,\eta}(\tilde \delta)\biggr|_{(0,0)}
    =J(0,0)^2
    \biggl. \Hess_{u,v}(\delta)\biggr|_{(0,0)},
\end{equation}
which implies that the definition of intrinsic cross caps
is independent of the choice of local coordinate systems.

\begin{Exa}
 On the $uv$-plane $\R^2$, we set
 \[
    d\sigma^2:=e^{\epsilon(u,v)}\,du^2+2F(u,v)\,du\,dv+G(u,v)\, dv^2.
 \]
 If the functions $\epsilon$, $F$ and $G$ satisfy
 \begin{align*}
  & \epsilon(0,0)=F(0,0)=G(0,0)=0,\quad
    d \epsilon(0,0)=d F(0,0)=d G(0,0)=0, \\
  & G(u,v)\ge 0, \qquad
  \det\pmt{G_{uu}(0,0) & G_{uv}(0,0)\\
           G_{uv}(0,0) & G_{vv}(0,0)}
  \ne 0,
 \end{align*}
 then it can be checked that $(0,0)$
 is an intrinsic cross cap of $d\sigma^2$.
 Conversely,
 any Whitney metric has such an expression
 at its singular point, which is a 
 consequence of the existence of 
 adapted coordinate systems 
 (cf.\ Definition \ref{def:Wa} and Proposition \ref{prop:Wa}).
\end{Exa}

\begin{Prop}\label{prop:hessian}
 An intrinsic cross cap singular point of $d\sigma^2$ 
 is  an isolated singular point where
 the null space \rm{(}cf.~\eqref{eq:N}{\rm)}
 $\N_p$ is one dimensional.
\end{Prop}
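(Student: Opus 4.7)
The plan is to analyze the non-negative function $\delta := EG - F^2$ in a local coordinate system $(u,v)$ centered at $p$. Positive semi-definiteness of $d\sigma^2$ forces $\delta \ge 0$ throughout the coordinate neighborhood, and the singular set coincides with $\{\delta = 0\}$ (indeed, since $E,G\ge 0$, the only way the metric matrix can fail to be positive definite is for its determinant to vanish). Hence $p$ is a zero of $\delta$ and thus a local minimum; automatically $d\delta(p) = 0$, and $\Hess_{u,v}(\delta)|_p$ is positive semi-definite. The defining condition of an intrinsic cross cap upgrades this Hessian to positive definite, so Taylor's theorem yields $\delta(u,v) > 0$ on some punctured neighborhood of $p$. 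This proves $p$ is isolated in the singular set, and coordinate-independence of the conclusion is guaranteed by the transformation rules \eqref{eq:J} and \eqref{eq:detJ}.

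It remains to show $\dim \N_p = 1$. At any singular point $q$, the symmetric matrix $\pmt{E & F \\ F & G}$ evaluated at $q$ is positive semi-definite with zero determinant, so its rank is $0$ or $1$; equivalently $\dim \N_q \in \{1,2\}$. I must rule out $\dim \N_p = 2$. Suppose for contradiction that $E(p) = F(p) = G(p) = 0$. Since $E, G \ge 0$ are diagonal entries of a positive semi-definite form, $p$ is a minimum of each of them, hence
\[
   E_u(p) = E_v(p) = G_u(p) = G_v(p) = 0.
\]
Expanding the second partials of $\delta = EG - F^2$ at $p$ and using these vanishings together with $E(p)=F(p)=G(p)=0$ collapses everything except the $-2F_\bullet F_\bullet$ terms, yielding
\[
   \Hess_{u,v}(\delta)\bigg|_p
   = -2 \pmt{F_u(p)^2 & F_u(p) F_v(p) \\ F_u(p) F_v(p) & F_v(p)^2},
\]
a rank-one matrix whose determinant is zero. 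This contradicts non-degeneracy of $\Hess_{u,v}(\delta)$ at $p$.

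The only non-routine step is this second-derivative computation; the key observation unlocking it is that $E$ and $G$, being non-negative smooth functions, have vanishing first partials at any zero. Everything else follows from standard Taylor expansion at a non-degenerate minimum of a smooth non-negative function. I foresee no serious obstacle, and the whole argument should fit in a handful of lines once the Hessian formula above is recorded.
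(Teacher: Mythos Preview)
Your proof is correct and follows essentially the same route as the paper's: both reduce isolation to the standard fact that a non-degenerate critical point of $\delta$ is isolated, and both rule out $\dim\N_p=2$ by observing that $E,G\ge 0$ forces $dE(p)=dG(p)=0$, whence the Hessian of $\delta$ at $p$ collapses to the rank-at-most-one matrix $-2\pmt{F_u^2 & F_uF_v\\ F_uF_v & F_v^2}$ with vanishing determinant. Your write-up is slightly more explicit about why the singular set equals $\{\delta=0\}$ and about the Taylor argument, but the substance is identical.
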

In particular, a Kossowski metric cannot be
a Whitney metric, since singular points of
a Kossowski metric are not isolated (cf.\ Lemma~\ref{lem:rankone}). 
\begin{proof}
 Since non-degenerate critical points of a smooth function
 are isolated, an intrinsic cross cap is an isolated
 singular point.
 Suppose that $\dim \N_p=2$.
 Then
 it holds that
 \[
     E(0,0)=F(0,0)=G(0,0)=0,
 \]
 where $d\sigma^2=E\,du^2+2F\,du\,dv+G\,dv^2$.
 Since $d\sigma^2$ is positive semi-definite,
 $E$, $G\ge 0$ hold, and 
$(0,0)$ is a critical points of the functions
$E,F$. Thus 
we have
 \[
    E_u(0,0)=E_v(0,0)=G_{u}(0,0)=G_{v}(0,0)=0.
 \]
 Then we get
 \[
    \left.\Hess_{u,v}(\delta)\right|_{(0,0)}
       =4\det\pmt{-F_u(0,0)^2 & -F_u(0,0)F_v(0,0)\\
       -F_u(0,0)F_v(0,0) & -F_v(0,0)^2}=0,
 \]
 which contradicts that $p$ is an intrinsic
 cross cap.
\end{proof}

\begin{Prop}\label{prop:delta}
 Let $p$
 be an intrinsic cross cap  
 of a Whitney metric $d\sigma^2$,
 and $(u,v)$ a local coordinate system
 adjusted at $p$ in the sense of Definition~\ref{def:adms-coord}.
 Then the identity
 \[
    \delta_{uu}\delta_{vv}-\delta_{uv}^2=4E\Delta,
    \qquad
    \Delta:=
    \left.
    \det\pmt{E    & F_u      & F_v \\
             F_u  & G_{uu}/2 & G_{uv}/2 \\
             F_v  & G_{uv}/2 & G_{vv}/2}
   \right|_{(0,0)}
 \]
 holds at $(u,v)=(0,0)$, where $\delta:=EG-F^2$ and
 $d\sigma^2=\,Edu^2+2F\,du\,dv+G\,dv^2$.
\end{Prop}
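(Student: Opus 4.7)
The plan is a direct computation exploiting the special form that $E,F,G$ and their low-order derivatives take at an adjusted intrinsic cross cap. First I would record the vanishing conditions satisfied at $p=(0,0)$. Since $(u,v)$ is adjusted, $\partial_v\in\N_p$, which forces $F(0,0)=G(0,0)=0$. By Proposition~\ref{prop:hessian}, $\N_p$ is one-dimensional, so $E(0,0)>0$. Because $d\sigma^2$ is positive semi-definite, $\delta=EG-F^2\ge 0$ globally, while $\delta(0,0)=0$; hence $p$ is a local minimum of $\delta$, in particular a critical point, so $\delta_u(0,0)=\delta_v(0,0)=0$. Differentiating $\delta=EG-F^2$ and substituting $F(0,0)=G(0,0)=0$ gives $\delta_u(0,0)=E(0,0)G_u(0,0)$ and $\delta_v(0,0)=E(0,0)G_v(0,0)$; since $E(0,0)\neq 0$, this yields the extra vanishings $G_u(0,0)=G_v(0,0)=0$.

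Next I would compute the three second derivatives of $\delta$ at $(0,0)$ directly from the product rule, discarding every term that contains a factor of $F$, $G$, $G_u$, or $G_v$. This gives
\begin{align*}
\delta_{uu}(0,0) &= EG_{uu}-2F_u^2,\\
\delta_{uv}(0,0) &= EG_{uv}-2F_u F_v,\\
\delta_{vv}(0,0) &= EG_{vv}-2F_v^2,
\end{align*}
where every quantity on the right is evaluated at $(0,0)$. Forming $\delta_{uu}\delta_{vv}-\delta_{uv}^2$ and expanding, the pure $F_u^2F_v^2$ terms cancel, leaving
\[
\delta_{uu}\delta_{vv}-\delta_{uv}^2
=E^{2}\bigl(G_{uu}G_{vv}-G_{uv}^{2}\bigr)
-2E\bigl(F_u^2 G_{vv}-2F_uF_v G_{uv}+F_v^2 G_{uu}\bigr).
\]

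Finally I would expand $\Delta$ along its first row to verify
\[
4E\Delta
=E^{2}\bigl(G_{uu}G_{vv}-G_{uv}^{2}\bigr)
-2E\bigl(F_u^2 G_{vv}-2F_uF_v G_{uv}+F_v^2 G_{uu}\bigr),
\]
matching the preceding expression, which completes the proof. No step is a real obstacle; the only care required is bookkeeping the vanishings at $p$ carefully so that the first-order derivative terms in the product rule drop out. The fact that the extra vanishings $G_u(0,0)=G_v(0,0)=0$ come for free from the semi-definiteness of $d\sigma^2$ (via the critical point observation) is what makes the identity clean.
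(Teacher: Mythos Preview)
Your proof is correct and follows the same computational path as the paper: reduce to the vanishings $F=G=G_u=G_v=0$ at $p$, read off $\delta_{uu}$, $\delta_{uv}$, $\delta_{vv}$, and then compare with the cofactor expansion of $\Delta$.

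There is one small but worthwhile difference. The paper obtains $G_u(0,0)=G_v(0,0)=0$ by invoking admissibility via Proposition~\ref{prop:property}, whereas you deduce it from the critical-point observation $\delta_u(0,0)=\delta_v(0,0)=0$ (which follows from $\delta\ge 0$ and $\delta(p)=0$) together with $E(0,0)\neq 0$. Your route therefore shows that the identity actually holds without ever using the admissibility hypothesis built into the definition of a Whitney metric; only positive semi-definiteness, the adjusted condition, and the nondegenerate-Hessian condition are needed. The paper's route is slightly shorter given its earlier setup, but yours isolates more precisely which hypotheses are doing the work.
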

\begin{proof}
 Since $p$ is an admissible singular point of
 $d\sigma^2$ and $(u,v)$ is a local
 coordinate system adjusted at $p$, 
 \[
    F(0,0)=G(0,0)=G_u(0,0)=G_v(0,0)=0
 \]
 hold (cf.\ Proposition \ref{prop:property}).
 Differentiating
 $\delta:=EG-F^2$ twice, we have
 \[
   \delta_{uu}=E G_{uu}-2 F_u^2,\quad
   \delta_{uv}=E G_{uv}-2 F_uF_v,\quad
   \delta_{vv}=E G_{vv}-2 F_v^2
 \]
 at $p=(0,0)$.
 Then one can get the identity by a straightforward calculation.
\end{proof}

As a corollary, we can show the following assertion,
which is a reason why we call $p$
an \lq intrinsic cross cap\rq. 
\begin{Cor}\label{cor:delta}
 Let $f:M^2\to \R^3$  be a $C^\infty$-map
 and $p$ a cross cap singularity of $f$.
 Then the first fundamental form $d\sigma^2$ of $f$
 is an admissible metric, and $p$ is an intrinsic
 cross cap of $d\sigma^2$.
\end{Cor}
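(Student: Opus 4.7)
The admissibility of $d\sigma^2$ is immediate from Proposition~\ref{prop:exa-adms}, so the plan is to focus on showing that $p$ is an intrinsic cross cap of $d\sigma^2$. My strategy is to reduce to the canonical form~\eqref{eq:cross} and then invoke Proposition~\ref{prop:delta}. Two preparatory remarks make the reduction legitimate: a rigid motion $T$ of $\R^3$ preserves the induced metric, and a change of source coordinate system scales $\Hess_{u,v}(\delta)$ by the nonzero factor $J(0,0)^2$ via \eqref{eq:detJ}. Hence I may assume without loss of generality that $f$ itself has the form~\eqref{eq:cross} at $p=(0,0)$ with $a_{02}>0$ by \eqref{eq:a02}.

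Reading off the $0$-th order terms of~\eqref{eq:cross}, one has $f_u(0,0)=(1,0,0)$ and $f_v(0,0)=(0,0,0)$, which yields $E(0,0)=1$ and $F(0,0)=G(0,0)=0$. In particular $\partial_v\in \N_p$, so the coordinates $(u,v)$ are adjusted at $p$ in the sense of Definition~\ref{def:adms-coord}, and Proposition~\ref{prop:delta} applies.

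The remaining task is to evaluate $\Delta$ at $(0,0)$ from the normal form and show it is nonzero. I plan to read off the second-order data of $F=f_u\cdot f_v$ and $G=f_v\cdot f_v$ directly from~\eqref{eq:cross}: since the quadratic part of $f^3$ is $\tfrac12 a_{20}u^2+a_{11}uv+\tfrac12 a_{02}v^2$ and the contribution of $f^2_u\, f^2_v$ is $uv$ plus higher order, I expect
\[
F_u(0,0)=F_v(0,0)=0,\qquad G_{uu}(0,0)=2(1+a_{11}^2),\quad G_{uv}(0,0)=2a_{02}a_{11},\quad G_{vv}(0,0)=2a_{02}^2.
\]
Substituting into the matrix defining $\Delta$ and expanding along the first row I anticipate the clean cancellation $\Delta=(1+a_{11}^2)a_{02}^2-(a_{02}a_{11})^2=a_{02}^2$, so that
\[
\Hess_{u,v}(\delta)|_{(0,0)}=4\,E(0,0)\,\Delta=4a_{02}^2>0
\]
by Proposition~\ref{prop:delta}, giving the conclusion.

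The only real obstacle is algebraic bookkeeping: verifying that the cubic and higher terms of $f$ in~\eqref{eq:cross} do not contribute to $F_u(0,0)$, $F_v(0,0)$, or to the second derivatives of $G$ needed above. The striking cancellation $\Delta=a_{02}^2$ shows that neither $a_{11}$ nor any higher coefficient influences the result, so only the nondegeneracy $a_{02}\ne 0$ of the quadratic part of $f^3$ actually matters for the intrinsic cross cap property.
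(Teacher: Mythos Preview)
Your argument is correct, and it takes a genuinely different route from the paper's own proof. The paper does not pass to the West normal form \eqref{eq:cross} at all: instead, it works in any local coordinate system with $f_v(0,0)=0$, invokes Whitney's criterion to conclude that $f_u(0,0)$, $f_{uv}(0,0)$, $f_{vv}(0,0)$ are linearly independent, and then observes the key identity
\[
   \pmt{f_u \\ f_{uv} \\ f_{vv}}
   (f_u,\ f_{uv},\ f_{vv})
   =
   \pmt{E    & F_u      & F_v \\
        F_u  & G_{uu}/2 & G_{uv}/2 \\
        F_v  & G_{uv}/2 & G_{vv}/2},
\]
so that $\Delta$ is the Gram determinant of three independent vectors and hence nonzero. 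This is shorter and more conceptual: it explains \emph{why} $\Delta\neq 0$ without any computation, and it makes transparent that the result depends only on Whitney's nondegeneracy condition. Your computational approach via the normal form, on the other hand, yields the explicit value $\Delta=a_{02}^2$ (and hence $\Hess_{u,v}(\delta)|_{(0,0)}=4a_{02}^2$), which is extra information the paper's argument does not produce directly and which foreshadows the role of $a_{02}$ in Proposition~\ref{prop:Wchr} and Remark~\ref{rmk:alpha}. Your bookkeeping worry is harmless: since $f_v(0,0)=0$, one has $G_{uu}=2|f_{uv}|^2$, $G_{uv}=2f_{uv}\cdot f_{vv}$, $G_{vv}=2|f_{vv}|^2$ and $F_u=f_u\cdot f_{uv}$, $F_v=f_u\cdot f_{vv}$ at the origin, so only the $2$-jets $f_{uv}(0,0)=(0,1,a_{11})$ and $f_{vv}(0,0)=(0,0,a_{02})$ enter, and the cubic and higher terms of \eqref{eq:cross} contribute nothing.
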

\begin{proof}
 By Proposition \ref{prop:exa-adms},
 $p$ is an admissible singular point of
 the metric $d\sigma^2$.
 We take a local coordinate system $(u,v)$ centered at $p$
 such that $f_v(0,0)=0$, in particular,
 $(u,v)$ is a local coordinate system adjusted at $p$.
By a well-known criterion of cross caps by Whitney \cite{Wh},
the three vectors
 \[
    f_u(0,0),\quad f_{uv}(0,0),\quad f_{vv}(0,0)
 \]
 must be linearly independent at $f(p)$.
 In particular,
 \begin{equation}\label{eq;det0}
  \pmt{f_u \\ f_{uv} \\ f_{vv}}
   (f_u,f_{uv},f_{vv})
   =
   \pmt{E    & F_u      & F_v \\
  F_u  & G_{uu}/2 & G_{uv}/2 \\
             F_v  & G_{uv}/2 & G_{vv}/2}
 \end{equation}
 is a regular matrix,
 where
 \[
    E=f_u\cdot f_u,\quad F=f_u\cdot f_v,\quad G=f_v\cdot f_v.
 \]
 Taking the determinant of
 \eqref{eq;det0},
 the conclusion follows from
 Proposition \ref{prop:delta}.
\end{proof}

The following assertion gives a characterization of
the coefficient $a_{02}$ of cross caps
in terms of Whitney metrics.

\begin{Prop}\label{prop:Wchr}
 Let $p\in M^2$ be an intrinsic cross cap of
 a given Whitney metric $d\sigma^2$.
 Let $(u,v)$ be a local coordinate system adjusted
 at $p$ {\rm (}cf.\ Definition~\ref{def:adms-coord}{\rm)}
 and set
 \[
     \alpha:=\frac{E(0,0) G_{vv}(0,0)}2-F_v(0,0)^2,
 \]
 where $d\sigma^2=E\,du^2+2F\,du\,dv+G\,dv^2$.
 Then 
 \begin{equation}\label{eq:a02def}
  \alpha_{02}:=\frac{\sqrt{E(0,0)}\alpha^{3/2}}{\Delta}
 \end{equation}
 is a positive value, which 
 does not depend on the choice of
 local coordinate systems adjusted at $p$
 {\rm (}cf.\ Definition~\ref{def:adms-coord}{\rm)}.
 Moreover, $\alpha_{02}$ coincides
 with the coefficient\footnote{
There is a typographical error in \cite[Page 779]{HHNUY}.
The right-hand side of the expression of 
$a_{02}$ should be 
divided not by $2$ but by $2^{3/2}$.
} 
$a_{02}$ in \eqref{eq:cross} if $d\sigma^2$
 is induced by the cross cap in $\R^3$.
\end{Prop}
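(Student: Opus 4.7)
The plan is to establish the three assertions (positivity, coordinate independence, and agreement with $a_{02}$) separately, in that order.

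For positivity, since $p$ is an intrinsic cross cap and $\delta\ge 0$, the function $\delta$ attains a non-degenerate local minimum at $p$, so $\Hess_{u,v}(\delta)|_{p}$ is positive definite. By Proposition \ref{prop:property}, the adjustedness of $(u,v)$ yields $F=G=G_u=G_v=0$ at $p$, and the computations in the proof of Proposition \ref{prop:delta} give $\delta_{vv}(0,0)=EG_{vv}-2F_v^2=2\alpha$. Positive-definiteness of the Hessian forces $\alpha>0$. Since $\N_p$ is one-dimensional (Proposition \ref{prop:hessian}) and contains $\partial_v$, the vector $\partial_u$ is non-null, so $E(0,0)>0$; and $\det\Hess(\delta)|_p=4E\Delta>0$ by Proposition \ref{prop:delta}, whence $\Delta>0$. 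Thus $\alpha_{02}$ is well-defined and strictly positive.

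For coordinate independence, I would let $(\xi,\eta)$ be another adjusted coordinate system at $p$. Lemma \ref{lem:u2} gives $u_\eta(0,0)=0$; set $a:=u_\xi(0,0)$ and $b:=v_\eta(0,0)$, so the Jacobian determinant at $p$ is $J(p)=ab$. Using $F(0,0)=G(0,0)=0$ in \eqref{eq:ee} yields $\tilde E(0,0)=a^2 E(0,0)$. The key tool is the scalar identity $\tilde\delta=J^2\delta$ from \eqref{eq:J}: because $\delta(p)=\delta_u(p)=\delta_v(p)=0$ by \eqref{eq:delta-minima}, expanding by the chain rule gives
\[
\Hess_{\xi,\eta}(\tilde\delta)|_p = J(p)^2\,\mathcal{J}^{\top}\,\Hess_{u,v}(\delta)|_p\,\mathcal{J},
\]
where $\mathcal{J}$ is the Jacobian matrix of $(u,v)$ with respect to $(\xi,\eta)$ at $p$. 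Using $u_\eta(0,0)=0$, this extracts $\tilde\delta_{\eta\eta}(p)=J(p)^2 v_\eta^2\delta_{vv}(p)=2a^2b^4\alpha$, i.e.\ $\tilde\alpha=a^2b^4\alpha$. Taking determinants of the Hessian identity and using $\det\Hess(\tilde\delta)|_p=4\tilde E\tilde\Delta$, I get $\tilde\Delta=a^4b^6\Delta$. Substituting into \eqref{eq:a02def}, the $|a|^4b^6$ factors cancel:
\[
\tilde\alpha_{02}=\frac{|a|\sqrt{E}\,(a^2b^4\alpha)^{3/2}}{a^4b^6\Delta}=\frac{a^4 b^6 \sqrt{E}\,\alpha^{3/2}}{a^4 b^6\Delta}=\alpha_{02},
\]
independent of the signs of $a$ and $b$.

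For the last claim, evaluate at the canonical coordinates of \eqref{eq:cross}: at $(0,0)$ one reads off $f_u=(1,0,0)$, $f_v=0$, $f_{uv}=(0,1,a_{11})$, $f_{vv}=(0,0,a_{02})$, whence $E=1$, $F_u=F_v=0$, $G_{uu}=2(1+a_{11}^2)$, $G_{uv}=2a_{11}a_{02}$, $G_{vv}=2a_{02}^2$. This yields $\alpha=a_{02}^2$ and, after expanding the $3\times3$ determinant, $\Delta=(1+a_{11}^2)a_{02}^2-a_{11}^2a_{02}^2=a_{02}^2$. The normalization \eqref{eq:a02} gives $a_{02}>0$, so $\alpha_{02}=1\cdot a_{02}^3/a_{02}^2=a_{02}$. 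The main obstacle is tracking how $\Delta$ transforms under adjusted coordinate changes, since the defining $3\times3$ matrix mixes first and second derivatives of the metric in an asymmetric way; the cleanest route, as above, is to avoid manipulating $\Delta$ directly and instead use the invariantly-written identity $\det\Hess(\delta)=4E\Delta$ together with the scalar rule $\tilde\delta=J^2\delta$, which pins down the transformation of $\Hess(\delta)$ cleanly.
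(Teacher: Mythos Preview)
Your proof is correct and follows essentially the same route as the paper's: both use $\alpha=\tfrac12\delta_{vv}(0,0)$ together with Proposition~\ref{prop:delta} for positivity, and both establish coordinate independence by exploiting the scalar identity $\tilde\delta=J^2\delta$ to track how $\tilde E$, $\tilde\delta_{\eta\eta}$, and $\det\Hess(\tilde\delta)$ transform under an adjusted change of coordinates. The only noteworthy difference is in the final assertion: the paper appeals to \cite[Corollary~8]{HHNUY}, whereas you verify $\alpha_{02}=a_{02}$ by a direct computation in the canonical coordinates of \eqref{eq:cross}, which makes your argument more self-contained.
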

\begin{proof}
It can be easily checked that
$\alpha=\delta_{vv}(0,0)/2$
where $\delta=EG-F^2$.
Since $\Hess_{u,v}(\delta)\ne 0$,
we have that $\alpha>0$.
On the other hand,
by Proposition \ref{prop:delta}, it holds that
\[
   4E(0,0)\Delta=\delta_{uu}(0,0)\delta_{vv}(0,0)-\delta_{uv}(0,0)^2>0,
\]
and we get the inequality $\alpha_{02}>0$. 

Let $(\xi,\eta)$ be another local coordinate system
adjusted at $p$.
We set
$\tilde \delta=\tilde E\tilde G-\tilde F^2$.
By \eqref{eq:admissible} and
\eqref{eq:u2}, it holds that 
\begin{equation}\label{eq:tE}
\tilde E(0,0)=E(0,0) u_\xi(0,0)^2.
\end{equation}
By \eqref{eq:J} and
\eqref{eq:u2}, we have
\begin{equation}\label{eq:hth}
\tilde \delta(0,0)=\biggl(u_\xi(0,0) v_\eta(0,0)\biggr)^2 \delta(0,0).
\end{equation}
By \eqref{eq:u2},  
$\tilde \delta_{\eta\eta}(0,0)=\tilde \delta_{vv}(0,0) v_\eta(0,0)^2$
holds. 
Moreover, since $\delta=\delta_u=\delta_v=0$ holds at $(0,0)$, we have that
\[
\tilde \delta_{\eta\eta}(0,0)=\delta_{vv}(0,0)(u_\xi(0,0))^4(v_\eta(0,0))^6.
\]
Then \eqref{eq:tE} yields 
\begin{equation}\label{eq:001}
(\tilde E(0,0) \tilde \delta_{\eta\eta}(0,0))^{3/2}
=(u_\xi v_\eta)^6\biggl(E(0,0) \delta_{vv}(0,0)\biggr)^{3/2}.
\end{equation}
On the other hand, \ref{eq:detJ}, we have
\[
\Hess_{\xi,\eta}(\tilde \delta)(0,0)
=(u_\xi v_\eta)^6\op{Hess}_{u,v}(\delta)(0,0).
\]
By this equality and \eqref{eq:001}, we get
\begin{equation}\label{eq:id0}
\frac{(\tilde E(0,0) 
\tilde \delta_{\eta\eta}(0,0))^{3/2}}{\Hess_{\xi,\eta}(\tilde \delta)(0,0)}=
\frac{(E(0,0) \delta_{vv}(0,0))^{3/2}}{\Hess_{u,v}(\delta)(0,0)},
\end{equation}
which implies the coordinate invariance of $\alpha_{02}$.
Then the last assertion follows from
\cite[Corollary 8]{HHNUY}.
\end{proof}

We next give formulations for $a_{20}$ and $a_{11}$
in terms of Whitney metrics.
\begin{Def}\label{def:Wa}
 Let $p$ be a singular point of a Whitney metric.
 If a local coordinate system $(u,v)$ adjusted at $p$
 satisfies
 \[
    E(0,0)=1,\qquad d E(0,0)=d F(0,0)=d G(0,0)=0,
 \]
 then it is called an {\em adapted coordinate system}
 at $p$, where $d\sigma^2=E\,du^2+2F\,du\,dv+G\,dv^2$.
\end{Def}
The following assertion can be easily proved.
\begin{Prop}\label{prop:Wa}
 There exists an adapted coordinate system at a given intrinsic
 cross cap.
 Moreover, if $(u,v)$ and $(\xi, \eta)$ are two
 adapted coordinate systems at 
 $p$, then 
 \begin{equation}\label{eq:A}
   u_\xi(0,0)=\pm 1,\qquad
   u_\eta(0,0)=u_{\xi\xi}(0,0)=u_{\xi\eta}(0,0)=u_{\eta\eta}(0,0)=0
 \end{equation}
 hold.
 Conversely, under the assumption that
 $(u,v)$ is adapted,
 a new coordinate system
 $(\xi,\eta)$ adjusted at $p$
 is also an adapted coordinate at $p$
 if it satisfies
 \eqref{eq:A}.
\end{Prop}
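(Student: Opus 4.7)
The plan is to prove the three assertions separately. For \textbf{existence}, I would begin with any adjusted coordinate system $(\bar u, \bar v)$ at $p$, which exists because $\N_p$ is one-dimensional by Proposition~\ref{prop:hessian} (so an affine change aligns $\partial_{\bar v}$ with $\N_p$). First, perform the coordinate change
$$u := \int_0^{\bar u}\sqrt{\bar E(t,\bar v)}\,dt, \qquad v := \bar v.$$
Since the integral vanishes identically at $\bar u = 0$, its partial in $\bar v$ is zero there, so the inverse Jacobian gives $\bar u_v(0,0) = 0$ and the new system remains adjusted. The identity $\partial_u = (1/\sqrt{\bar E})\,\partial_{\bar u}$ forces $E \equiv 1$ on a neighborhood. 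Admissibility (Proposition~\ref{prop:property}) now reads $0 = E_v = 2 F_u$ at $p$, so $F_u(0,0) = 0$, and the same proposition gives $G_u(0,0) = G_v(0,0) = 0$. The only remaining obstruction is that $F_v(0,0)$ may not vanish, which I would kill with the second change $u_{\text{new}} := u + \tfrac{1}{2}F_v(0,0)\,v^2$, $v_{\text{new}} := v$. This is adjusted-preserving and, since $\partial_{u_{\text{new}}} = \partial_u$, also preserves $E \equiv 1$; a short computation shows that the new $F_v(0,0)$ vanishes while $F(0,0)$, $F_u(0,0)$, and the values and first derivatives of $G$ at the origin stay zero.

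For the \textbf{necessity of \eqref{eq:A}}, let $(u,v)$ and $(\xi,\eta)$ be two adapted systems at $p$. Both are adjusted, so Lemma~\ref{lem:u2} gives $u_\eta(0,0) = 0$. Evaluating \eqref{eq:ee} at $p$ and using $F(0,0) = G(0,0) = 0$ and $u_\eta(0,0) = 0$ yields $\tilde E(0,0) = u_\xi(0,0)^2 E(0,0)$, hence $u_\xi(0,0) = \pm 1$. To extract the vanishing of the three second derivatives, I would differentiate \eqref{eq:ee}--\eqref{eq:gg} once in $\xi$ or $\eta$, evaluate at $p$, and exploit the fact that adaptedness of $(u,v)$ kills every first derivative of $E$, $F$, and $G$. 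What survives reduces to
\begin{align*}
\tilde E_\xi(0,0) &= 2 u_\xi(0,0)\, u_{\xi\xi}(0,0),\\
\tilde E_\eta(0,0) &= 2 u_\xi(0,0)\, u_{\xi\eta}(0,0),\\
\tilde F_\eta(0,0) &= u_\xi(0,0)\, u_{\eta\eta}(0,0),
\end{align*}
and since adaptedness of $(\xi,\eta)$ forces the three left-hand sides to vanish and $u_\xi(0,0) = \pm 1$, we obtain $u_{\xi\xi}(0,0) = u_{\xi\eta}(0,0) = u_{\eta\eta}(0,0) = 0$.

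For the \textbf{converse}, assume $(u,v)$ is adapted and $(\xi,\eta)$ is adjusted satisfying \eqref{eq:A}. The same evaluations of \eqref{eq:ee}--\eqref{eq:gg} and their first derivatives at $p$ show $\tilde E(0,0) = 1$ and $d\tilde E(0,0) = d\tilde F(0,0) = d\tilde G(0,0) = 0$, since every surviving term either vanishes by adaptedness of $(u,v)$ or by \eqref{eq:A}. I expect the main obstacle to be the existence step: one must choose the two coordinate changes so that each preserves adjustedness and does not undo the normalizations already achieved. This is conceptually clean but requires carefully exploiting admissibility ($E_v = 2F_u$, $G_u = G_v = 0$ at $p$) and the simple structure of each change.
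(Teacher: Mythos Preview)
Your argument is correct. For the existence step you take a genuinely different route from the paper: the paper starts from an adjusted system and applies a single polynomial change
\[
  u(\xi,\eta)=c_1\xi+c_{11}\xi^2+c_{12}\xi\eta+c_{22}\eta^2,\qquad v(\xi,\eta)=\eta,
\]
choosing the four constants so that $\tilde E(0,0)=1$ and $d\tilde E=d\tilde F=d\tilde G=0$ at $p$ (admissibility already forcing $G_u=G_v=0$ and $E_v=2F_u$). Your two-step construction instead first forces $E\equiv 1$ on a full neighborhood via the arclength integral in $\bar u$, and then removes the single remaining obstruction $F_v(0,0)$ with an explicit quadratic shear. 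Your method buys a slightly stronger normalization ($E\equiv 1$ rather than $E(0,0)=1$) and makes the mechanism transparent; the paper's ansatz is quicker to state but leaves the solvability of the four equations to the reader. For the necessity of \eqref{eq:A} and the converse, the paper simply says the assertions follow immediately from \eqref{eq:ee}--\eqref{eq:gg}; your explicit first-derivative computations of $\tilde E_\xi$, $\tilde E_\eta$, $\tilde F_\eta$ at $p$ are exactly what is meant, so there you are filling in details along the same lines as the paper.
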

\begin{proof}
 Let $(u,v)$ be an adjusted coordinate system centered at $p$,
 and set
 \[
   u(\xi,\eta)=c_1 \xi+c_{11} \xi^2+c_{12}\xi\eta+c_{22}\eta^2, \qquad
   v(\xi,\eta)=\eta
 \]
 for constants $c_1$, $c_{11}$, $c_{12}$ and $c_{22}$.
 Then one can choose these constants such that $(\xi,\eta)$
 is adapted,
 using \eqref{eq:ee}, 
 \eqref{eq:ff} and \eqref{eq:gg}.
 The last assertion follows immediately.
\end{proof}

\begin{Def}\label{def:West}
 An adapted coordinate system $(u,v)$ at an intrinsic cross cap $p\in M^2$
 is called  
a {\it West type coordinate system of the second  order}
 if there exist two  real numbers
 $\alpha_{20}$ and $\alpha_{11}$ such that
 \begin{align*}
  E&=1+(\alpha_{20})^2u^2+2 \alpha_{11}\alpha_{20}uv+(1+(\alpha_{11})^2)v^2
  +O(u,v)^3,\\
  F&=\alpha_{11}\alpha_{20}u^2+
  (1+(\alpha_{11})^2+\alpha_{02}\alpha_{20})uv+\alpha_{02}\alpha_{11}v^2
  +O(u,v)^3,\\
  G&=(1+(\alpha_{11})^2)u^2+2 \alpha_{02}\alpha_{11}uv+(\alpha_{02})^2 v^2
  +O(u,v)^3,
 \end{align*}
 where $d\sigma^2=E\,du^2+2F\,du\,dv+G\,dv^2$ and
 $\alpha_{02}$ is given by \eqref{eq:a02def}.
\end{Def}

The following assertion can be proved easily, and is the reason why
we call $(u,v)$ is of West type:

\begin{Prop} 
 A canonical coordinate system at a cross cap singular point
 is a West type coordinate system of the second order.
 Moreover, 
 $\alpha_{20}$ and $\alpha_{11}$ coincide with
 the corresponding coefficients 
$a_{20}$ and $a_{11}$
 in \eqref{eq:cross}.
\end{Prop}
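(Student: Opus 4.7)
The plan is to verify both assertions by a direct Taylor expansion of the induced metric from the normal form \eqref{eq:cross} of a cross cap. First I would write out $f$ in the canonical coordinate system up to the relevant order,
\begin{align*}
f^1 &= u, \\
f^2 &= uv + O(v^3), \\
f^3 &= \tfrac{1}{2}\bigl(a_{20}u^2 + 2a_{11}uv + a_{02}v^2\bigr) + O(u,v)^3,
\end{align*}
and differentiate. Since $f_v(0,0) = 0$, the vector $\partial_v$ lies in $\N_p$, so the canonical coordinate system is adjusted at $p$ in the sense of Definition \ref{def:adms-coord}.

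Next I would compute $E = f_u \cdot f_u$, $F = f_u \cdot f_v$, and $G = f_v \cdot f_v$ as Taylor polynomials up to order two. The only contributions at that order come from the constant and linear parts of $f_u$ and $f_v$; the cubic term of $f^3$ and the cubic $(b_3/6)v^3$ in $f^2$ only affect the $O(u,v)^3$ remainders. A straightforward calculation shows that the resulting expansions match exactly those in Definition \ref{def:West} with $\alpha_{20} = a_{20}$ and $\alpha_{11} = a_{11}$. In particular, one reads off $E(0,0) = 1$ together with $dE(0,0) = dF(0,0) = dG(0,0) = 0$, which confirms that the canonical coordinate system is an adapted coordinate at $p$ in the sense of Definition \ref{def:Wa}.

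Finally, to identify the coefficient $\alpha_{02}$ appearing in the quadratic part of $G$ with the $a_{02}$ of \eqref{eq:cross}, I would invoke the last clause of Proposition \ref{prop:Wchr}, which asserts precisely this identification for any metric induced by a cross cap. There is no serious obstacle beyond careful bookkeeping of cross terms in the Taylor expansion; the proof is purely computational. What is worth emphasizing is that $b_3$ does not enter at order two, so the West-type expansion is insensitive to the isometric deformations of cross caps constructed in \cite{HHNUY}, consistent with the fact that $a_{20}, a_{11}, a_{02}$ are intrinsic.
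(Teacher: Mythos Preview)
Your proposal is correct and follows exactly the route the paper intends: the paper offers no proof beyond the remark that the assertion ``can be proved easily,'' and the easy proof is precisely the direct Taylor expansion of $E$, $F$, $G$ from the normal form \eqref{eq:cross} that you outline. Your bookkeeping is accurate, and invoking Proposition~\ref{prop:Wchr} to identify $\alpha_{02}$ with $a_{02}$ is the right way to close the loop.
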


The following assertion holds:

\begin{Thm}\label{thm:west}
There exists a West type coordinate system 
of the second order  at each
 intrinsic cross cap.
\end{Thm}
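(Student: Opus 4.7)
The plan is to start with any adapted coordinate system $(u,v)$ at $p$, which exists by Proposition~\ref{prop:Wa}, and then construct an adapted-to-adapted coordinate change producing the West form. Write the Taylor expansions
\begin{align*}
E &= 1 + e_{20}u^2 + 2e_{11}uv + e_{02}v^2 + O(u,v)^3,\\
F &= f_{20}u^2 + 2f_{11}uv + f_{02}v^2 + O(u,v)^3,\\
G &= g_{20}u^2 + 2g_{11}uv + g_{02}v^2 + O(u,v)^3,
\end{align*}
and note that positive semi-definiteness together with the intrinsic cross cap condition yields $g_{20},g_{02}>0$ and $\Delta:=g_{20}g_{02}-g_{11}^2>0$; moreover, the intrinsic invariant \eqref{eq:a02def} simplifies in adapted coordinates to $\alpha_{02}=g_{02}^{3/2}/\Delta$.

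By Proposition~\ref{prop:Wa}, the most general adapted-to-adapted coordinate change has
\[
u(\xi,\eta)=\epsilon\xi+a\xi^3+b\xi^2\eta+c\xi\eta^2+d\eta^3+O(\xi,\eta)^4,\quad
v(\xi,\eta)=p\xi+q\eta+O(\xi,\eta)^2,
\]
with $\epsilon=\pm 1$ and $q\ne 0$. Inspection of the pull-back formulas \eqref{eq:ee}--\eqref{eq:gg}, together with $F(0,0)=G(0,0)=0$ and $u_\eta(0,0)=0$, shows that the quadratic parts of $\tilde E,\tilde F,\tilde G$ depend only on $(\epsilon,p,q)$ and on the cubic coefficients $(a,b,c,d)$ of $u(\xi,\eta)$; the quadratic-and-higher coefficients of $v(\xi,\eta)$ contribute only at orders $\ge 3$ in $(\xi,\eta)$.

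First I would normalize $\tilde G$ by setting $q^2=g_{02}/\Delta$ and $p=(\alpha_{11}\sqrt{\Delta}-\epsilon g_{11})/g_{02}$, keeping $\alpha_{11}\in\R$ as a free parameter. A direct calculation using $q^2\Delta=g_{02}$ and $\alpha_{02}/q^3=\sqrt{\Delta}$ then gives $\tilde g_{02}=\alpha_{02}^2$, $\tilde g_{11}=\alpha_{02}\alpha_{11}$, and automatically $\tilde g_{20}=1+\alpha_{11}^2$, so $\tilde G$ has the West form for every choice of $\alpha_{11}$. Next, the three coefficients of $\tilde F$ are affine in $(b,c,d)$ via the $\epsilon b$, $2\epsilon c$, $3\epsilon d$ contributions to $u_\xi u_\eta$ (through the term $E u_\xi u_\eta$ in \eqref{eq:ff}), so matching $\tilde F$ to the West target determines $b$, $c$, $d$ linearly in terms of $\alpha_{11}$ and an additional parameter $\alpha_{20}$ entering through $2\tilde f_{11}=1+\alpha_{11}^2+\alpha_{02}\alpha_{20}$. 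Finally, the three coefficients of $\tilde E$ are affine in $(a,b,c)$ through $u_\xi^2$; the equation $\tilde e_{20}=\alpha_{20}^2$ determines $a$, while after substituting the values of $b$ and $c$ from the previous stage, the remaining equations $\tilde e_{11}=\alpha_{11}\alpha_{20}$ and $\tilde e_{02}=1+\alpha_{11}^2$ reduce to two algebraic relations purely in $(\alpha_{11},\alpha_{20})$.

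The main obstacle is to verify that this two-by-two algebraic system admits a real solution for every Whitney metric. I would approach it by eliminating $\alpha_{20}$ via $\alpha_{20}=\tilde e_{11}/\alpha_{11}$ when $\alpha_{11}\ne 0$ (and treating $\alpha_{11}=0$ as a separate reduction) to produce a single polynomial equation in $\alpha_{11}$, then verifying the existence of a real root using the strict inequality $\Delta>0$ that defines an intrinsic cross cap. As a complementary check, when $d\sigma^2$ is the induced metric of a cross cap already written in West normal form \eqref{eq:cross}, the required $(\alpha_{11},\alpha_{20})$ coincide with $(a_{11},a_{20})$, so the construction is automatic in that case and provides a dense family of reference examples against which the general algebraic argument can be matched. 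Once $(\alpha_{11},\alpha_{20})$ are in hand, all coordinate change parameters $(\epsilon,p,q,a,b,c,d)$ are determined, completing the construction.
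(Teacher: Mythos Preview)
Your approach leaves a genuine gap at exactly the point you flag as ``the main obstacle'': you never verify that the two residual equations in $(\alpha_{11},\alpha_{20})$ have a real solution.  Proposing to eliminate $\alpha_{20}$ and study a polynomial in $\alpha_{11}$ is not a proof, and invoking the cross-cap case as a ``dense family of reference examples'' does not establish solvability for an arbitrary Whitney metric.  Since this is the crux of the argument, the proof as written is incomplete.

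The paper avoids this algebraic system altogether by a different staging.  Before touching the cubic coefficients of $u$, it first performs two preliminary normalizations on the linear part of $v$: a scaling of $v$ (first-level adjustment, Lemma~\ref{lem:c1}) to force $G_{vv}(0,0)=2\alpha_{02}^2$, and then a shear $v\mapsto \eta+c\xi$ (second-level adjustment, Lemma~\ref{prop:c2}) to force the single scalar condition~\eqref{eq:second}.  The point is that this second-level condition is exactly the compatibility relation hidden in your two-by-two system: once it holds, one can \emph{define} $\alpha_{11}$ and $\alpha_{20}$ directly by $G_{uv}=2\alpha_{11}\alpha_{02}$ and $F_{uv}-E_{vv}/2=\alpha_{20}\alpha_{02}$ (equations~\eqref{eq:a1-1} and~\eqref{eq:FE1}), and then~\eqref{eq:second} forces $F_{uu}-E_{uv}/2=\alpha_{11}\alpha_{20}$ automatically (equation~\eqref{eq:FE2}).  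After this, only four quadratic coefficients ($E_{uu}$, $F_{uu}$, $F_{uv}$, $F_{vv}$) remain to be fixed, and these are hit by the four cubic coefficients $c_{30},c_{21},c_{12},c_{03}$ of $u$ via a purely linear (triangular) adjustment; the remaining $E_{uv}$ and $E_{vv}$ then fall into place by~\eqref{eq:FE1} and~\eqref{eq:FE2}.  No nonlinear system ever arises.

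In short: your parameter $p=v_\xi(0,0)$ is the degree of freedom that the paper spends on condition~\eqref{eq:second}, and doing so up front linearizes everything that follows.  If you want to salvage your route, you would need to show that your two residual equations are equivalent to~\eqref{eq:second} together with one linear equation---which is true, but amounts to rediscovering the paper's decomposition.
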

To prove the theorem, we prepare several definitions and lemmas:
\begin{Def}
 An adapted coordinate system $(u,v)$ at an
 intrinsic cross cap $p$ is said to be
 {\it adjusted in the first-level} 
 if $G_{vv}(0,0)=2(\alpha_{02})^2$,
 where $G=d\sigma^2(\partial_v,\partial_v)$.
\end{Def}

By definition,
a West type coordinate system of the second order
is adjusted in the first-level.

\begin{Lemma}\label{lem:c1}
 There exists an adapted coordinate system 
$(u,v)$ with first-level adjustment at an
 intrinsic cross cap $p$.
 Moreover, if an adapted coordinate system $(\xi,\eta)$
 at $p$ satisfies
 \begin{equation}
  \label{eq:A2}
   v_{\eta}(0,0)=\pm 1,
 \end{equation}
 then $(\xi,\eta)$ is also an
 adapted coordinate system adjusted in the first-level. 
\end{Lemma}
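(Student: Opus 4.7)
The plan is to construct a first-level adjusted adapted coordinate system from an arbitrary one (which exists by Proposition~\ref{prop:Wa}) through a simple rescaling of the $v$-coordinate, and then to deduce the second assertion as a coordinate-invariance check of the transformation formula derived along the way.

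The key ingredient is the behavior of $\tilde G_{\eta\eta}(0,0)$ under an adapted-to-adapted coordinate change. Let $(u,v)$ be adapted at $p$ and let $(\xi,\eta)$ be another adapted system, linked to $(u,v)$ by the relations \eqref{eq:A} from Proposition~\ref{prop:Wa}. Starting from the identity \eqref{eq:gg}
\[
\tilde G = E u_\eta^2 + 2 F u_\eta v_\eta + G v_\eta^2,
\]
I would combine $u_\eta(0,0)=u_{\eta\eta}(0,0)=0$ with the vanishing at $p$ of $F$, $G$ and all their first partial derivatives (a consequence of adaptedness) to check that the two terms $E u_\eta^2$ and $2 F u_\eta v_\eta$ contribute nothing to $\tilde G_{\eta\eta}(0,0)$, while the third term contributes $G_{vv}(0,0)\, v_\eta(0,0)^4$ after differentiating $G_\eta = G_u u_\eta + G_v v_\eta$ once more with respect to $\eta$ and using the same vanishing conditions. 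The resulting transformation rule is
\[
\tilde G_{\eta\eta}(0,0) = G_{vv}(0,0)\, v_\eta(0,0)^4.
\]
The only mildly delicate step is this derivation itself, which demands careful bookkeeping of the many vanishing conditions imposed by \eqref{eq:A} and by adaptedness; once the rule is in place, the lemma is essentially immediate.

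With the formula in hand, both assertions follow quickly. For existence, start from an arbitrary adapted system $(u,v)$ and consider the change $u = \xi$, $v = c\eta$ with $c > 0$; the conditions \eqref{eq:A} are obviously satisfied, and the formula gives $\tilde G_{\eta\eta}(0,0) = c^4 G_{vv}(0,0)$. I would verify that $G_{vv}(0,0) > 0$ as follows: in adapted coordinates $E(p) = 1$ and $F_v(p) = 0$, so Proposition~\ref{prop:delta} yields $\delta_{vv}(0,0) = G_{vv}(0,0)$, and the Hessian of $\delta$ is positive definite at $p$ because $\delta \geq 0$ attains a non-degenerate local minimum there. Hence the choice $c = \bigl(2(\alpha_{02})^2 / G_{vv}(0,0)\bigr)^{1/4}$ forces $\tilde G_{\eta\eta}(0,0) = 2(\alpha_{02})^2$, the first-level adjustment. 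For the second claim, if $(u,v)$ is first-level adjusted and $(\xi,\eta)$ is adapted with $v_\eta(0,0) = \pm 1$, then the formula immediately yields $\tilde G_{\eta\eta}(0,0) = G_{vv}(0,0) = 2(\alpha_{02})^2$, using that $\alpha_{02}$ is coordinate-invariant by Proposition~\ref{prop:Wchr}.
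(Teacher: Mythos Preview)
Your proposal is correct and follows essentially the same route as the paper: the paper's proof simply writes down the rescaling $u=\xi$, $v=\bigl(2(\alpha_{02})^2/G_{vv}(0,0)\bigr)^{1/4}\eta$ and asserts it has the desired property, without deriving the transformation rule $\tilde G_{\eta\eta}(0,0)=G_{vv}(0,0)\,v_\eta(0,0)^4$ or verifying $G_{vv}(0,0)>0$. Your version supplies those details and, in addition, actually proves the second assertion of the lemma (invariance under $v_\eta(0,0)=\pm1$), which the paper's proof omits entirely.
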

\begin{proof}
 Let $(u,v)$ be an adapted coordinate system at $p$.
 Then the new coordinate system $(\xi,\eta)$
 given by
 \[
   u=\xi,\qquad v=\frac{%
   \sqrt[4]{2}\sqrt{\alpha_{02}}}{\sqrt[4]{G_{vv}(0,0)}}\eta
 \]
 has the desired property.
\end{proof}
\begin{Rmk}\label{rmk:alpha}
 Let $(u,v)$ be an adapted coordinate system
 adjusted in the first-level 
 at  an intrinsic cross cap $p$.
 Since $E(0,0)=1$ and $F_v(0,0)=0$ by the definition of 
 adaptedness (cf.\ Definition~\ref{def:Wa}),
 $\alpha$ in Proposition~\ref{prop:Wchr} satisfies
 $\alpha=\frac{G_{vv}(0,0)}2$.
 By $G_{vv}(0,0)=2(a_{02})^2$, we have
 $\alpha=(\alpha_{02})^2$.
 So we have
 \[
   \Delta=\frac{\alpha^{3/2}}{\alpha_{02}}
=(\alpha_{02})^2=\frac{G_{vv}(0,0)}2.
 \]
\end{Rmk}

\begin{Def}\label{def:c2}
 An adapted coordinate system $(u,v)$ 
 adjusted in the first-level 
 is said to be {\it adjusted in the second-level} 
 if
 \begin{equation}\label{eq:second}
    \det\pmt{F_{uu}-E_{uv}/2 & G_{uv} \\
             F_{uv}-E_{vv}/2 & G_{vv}}=0
 \end{equation}
 holds at $(0,0)$, where
 $d\sigma^2=E\,du^2+2F\,du\,dv+G\,dv^2$.
\end{Def}

By Definition \ref{def:West}, it can be easily checked that
a West type coordinate system of the second order
is adjusted in the second-level.

\begin{Lemma}\label{prop:c2}
 There exists an adapted coordinate system at an
 intrinsic cross cap $p$ with second-level adjustment.
 Moreover, 
under the assumption that $(u,v)$ 
is an adapted coordinate system with second-level
adjustment,
 an adapted coordinate system $(\xi,\eta)$
 at $p$ is adjusted in the second level if and only if
 \begin{equation}\label{eq:A3}
  v_{\eta}(0,0)=\pm 1,\qquad v_{\xi}(0,0)=0.
 \end{equation}
\end{Lemma}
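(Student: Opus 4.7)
The plan is to analyze how the determinant
\[
\mathcal{D} := \det\pmt{F_{uu}-E_{uv}/2 & G_{uv} \\ F_{uv}-E_{vv}/2 & G_{vv}}\bigg|_{(0,0)}
\]
transforms under coordinate changes between adapted coordinate systems at $p$, and to exploit the intrinsic cross cap non-degeneracy to isolate the role of $v_\xi(0,0)$.

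For existence, I would start from a first-level adjusted adapted system $(u,v)$ (provided by Lemma~\ref{lem:c1}) and consider the one-parameter family $u = \xi$, $v = \eta + c\xi$ with $c \in \R$. The identities $u_\xi = 1$ and $u_\eta = u_{\xi\xi} = u_{\xi\eta} = u_{\eta\eta} = 0$ verify \eqref{eq:A}, so Proposition~\ref{prop:Wa} ensures $(\xi,\eta)$ is adapted; together with $v_\eta(0,0) = 1$, Lemma~\ref{lem:c1} keeps first-level adjustment intact. Using the adaptedness of $(u,v)$, namely $E(0,0)=1$, $F(0,0)=G(0,0)=0$, and the vanishing of every first partial derivative of $E$, $F$, $G$ at the origin, a direct computation of $\tilde E_{\xi\eta}$, $\tilde F_{\xi\xi}$, $\tilde F_{\xi\eta}$, $\tilde G_{\xi\eta}$, $\tilde E_{\eta\eta}$, and $\tilde G_{\eta\eta}$ at $(0,0)$ yields
\[
\tilde{\mathcal{D}} \;=\; \mathcal{D} \;+\; c\bigl[G_{uu}G_{vv} - G_{uv}^2\bigr]\bigg|_{(0,0)}.
\]
Since $\delta := EG - F^2$ satisfies $\delta_{uu}(0,0) = G_{uu}(0,0)$, $\delta_{uv}(0,0) = G_{uv}(0,0)$, and $\delta_{vv}(0,0) = G_{vv}(0,0)$ in adapted coordinates, the coefficient of $c$ equals $\Hess_{u,v}(\delta)|_{(0,0)}$, which is nonzero by Definition~\ref{def:hessian}. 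Solving $\tilde{\mathcal{D}}=0$ produces a unique $c$ and the required second-level adjusted system.

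For the if-and-only-if part, the key observation is that $\tilde{\mathcal{D}}$ at the origin depends only on the first-order jet of the coordinate change $(u,v)\leftrightarrow(\xi,\eta)$ between two adapted systems: higher-order derivatives of the transformation always appear multiplied by $F$, $G$, or a first partial derivative of $E$, $F$, $G$ at the origin, all of which vanish under adaptedness; the surviving factor $E(0,0) = 1$ pairs only with second derivatives of the $u$-component, which are zero by \eqref{eq:A}. Consequently the general case reduces to a linear transformation with $u_\xi(0,0) = \pm 1$, $u_\eta(0,0) = 0$, $v_\eta(0,0) = \pm 1$, and $v_\xi(0,0) = c$ free, and the formula above persists up to an overall sign in the coefficient of $c$.

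With this in hand, if $(\xi,\eta)$ is second-level adjusted then it is \emph{a fortiori} first-level adjusted, so Lemma~\ref{lem:c1} forces $v_\eta(0,0)=\pm 1$; the relation $\tilde{\mathcal{D}} = \mathcal{D} + c\cdot(\pm\Hess_{u,v}(\delta)|_{(0,0)}) = 0$ together with $\mathcal{D} = 0$ and the non-vanishing of $\Hess_{u,v}(\delta)|_{(0,0)}$ then forces $v_\xi(0,0) = 0$. Conversely, the conditions $v_\eta(0,0)=\pm 1$ and $v_\xi(0,0)=0$ yield first-level adjustment of $(\xi,\eta)$ by Lemma~\ref{lem:c1} and $\tilde{\mathcal{D}} = \mathcal{D} = 0$ directly from the same formula. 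The main obstacle is the rigorous verification that second- and higher-order derivatives of the coordinate transformation do not contribute to $\tilde{\mathcal{D}}|_{(0,0)}$; this is careful but routine bookkeeping, leveraging repeatedly the adaptedness vanishing conditions.
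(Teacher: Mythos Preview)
Your proposal is correct and follows essentially the same approach as the paper: establish that the defect $\mathcal{D}$ transforms as $\tilde{\mathcal{D}}=\mathcal{D}+v_\xi(0,0)\bigl(G_{uu}G_{vv}-G_{uv}^2\bigr)|_{(0,0)}$ between adapted first-level systems, identify the coefficient with $\Hess_{u,v}(\delta)\ne 0$, and use the shear $u=\xi$, $v=\eta+c\xi$ to solve for existence. The only organizational difference is that the paper derives the transformation formula directly for a general pair of adapted first-level systems (normalizing $u_\xi=v_\eta=1$) and then specializes, whereas you first compute for the shear and then reduce the general case via your first-order-jet observation; your explicit treatment of the ``moreover'' clause is slightly more thorough than the paper's, which leaves it implicit in the transformation formula.
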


\begin{proof}
 Let $(u,v)$ and $(\xi,\eta)$
 be two adapted coordinate systems
 with first-level adjustment. 
For the sake of simplicity, we consider the case of $u_{\xi}(0,0)=v_{\eta}(0,0)=1$ in \eqref{eq:A} and \eqref{eq:A2}.
 Then by \eqref{eq:u2},
 it holds that
 \[
   \det\begin{pmatrix}
	\tilde F_{\xi\xi}-\tilde E_{\xi\eta}/2 &
	\tilde G_{\xi\eta}
	\\
	\tilde F_{\xi\eta}-\tilde E_{\eta\eta}/2 &
	\tilde G_{\eta\eta}
       \end{pmatrix}
   =
    \det\pmt{F_{uu}-E_{uv}/2 & G_{uv} \\
             F_{uv}-E_{vv}/2 & G_{vv}}
      + v_{\xi}
    \det\pmt{G_{uu} & G_{uv} \\ G_{vu} &G_{vv} }
 \]
 at the origin,
 where $d\sigma^2=\tilde E\,d\xi^2+2\tilde F\,d\xi\,d\eta+\tilde
 G\,d\eta^2$.
 Since $(u,v)$ is adapted, $E=1$ and $F_u=F_v=0$ at the origin.
 Then by Proposition~\ref{prop:delta},
 \begin{equation}\label{eq:Delta}
   \Hess_{u,v}\delta(0,0)
   = 4\Delta=4\det\pmt{G_{uu}(0,0)& G_{uv}(0,0)\\
                       G_{vu}(0,0)& G_{vv}(0,0)}\neq 0.
 \end{equation}
 Thus, if we set
 \[
    u=\xi,\qquad v=\eta+c \xi,
 \]
 for a suitable constant $c$, such that
 \eqref{eq:second} holds at $(0,0)$,
then $(u,v)$ is a desired  adapted coordinate system
 with second-level adjustment.
\end{proof}

Now, we take an adapted coordinate system $(u,v)$ 
adjusted in the second-level. 
By Remark~\ref{rmk:alpha} and \eqref{eq:Delta},
\begin{align}\label{eq:G1}
 & G_{vv}(0,0)=2(\alpha_{02})^2,\\
 \label{eq:G2}
 & 4(\alpha_{02})^2=G_{uu}(0,0)G_{vv}(0,0)-G_{uv}^2(0,0)
\end{align}
hold. We now define a constant $\alpha_{11}$ so that
\begin{equation}\label{eq:a1-1}
   G_{uv}(0,0)=2 \alpha_{11}\alpha_{02}.
\end{equation}
Then \eqref{eq:G1} and \eqref{eq:G2} yield that
\begin{equation}\label{eq:a1-2}
   G_{uu}(0,0)=2(1+(\alpha_{11})^2).
\end{equation}
Next,  we define a constant $\alpha_{20}$ by
\begin{equation}\label{eq:FE1}
   F_{uv}-\frac{E_{vv}}2=\alpha_{20}\alpha_{02}.
\end{equation}
Then it holds that
\begin{align*}
  0&=
  \det\pmt{F_{uu}-E_{uv}/2 & G_{uv} \\
             F_{uv}-E_{vv}/2 & G_{vv}}
  \\&=
  \det\pmt{F_{uu}-E_{uv}/2 & 2 \alpha_{11}\alpha_{02} \\
             \alpha_{20}\alpha_{02} & 2(\alpha_{02})^2}
   =2(\alpha_{02})^2
   \det\pmt{F_{uu}-E_{uv}/2 & \alpha_{11} \\
             \alpha_{20} & 1},
\end{align*}
and we get
\begin{equation}\label{eq:FE2}
   F_{uu}-\frac{E_{uv}}2=\alpha_{11}\alpha_{20}.
\end{equation}

\begin{Thm}
The two constants
 $\alpha_{20}$ and $|\alpha_{11}|$ 
 at each intrinsic cross cap singularity
 do not depend on the choice
 of an  adapted coordinate system $(u,v)$
 with second-level adjustment.
Moreover, if $M^2$ is oriented, then
$\alpha_{11}$ is 
independent of the choice
of oriented adapted coordinate systems 
with second-level adjustment.
\end{Thm}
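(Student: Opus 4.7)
The plan is to reduce the theorem to proving the invariance, up to a sign $\epsilon_1\epsilon_2\in\{\pm 1\}$, of the scalars $G_{uv}(0,0)$ and $F_{uv}(0,0)-\frac{1}{2}E_{vv}(0,0)$ under a change between two adapted coordinate systems with second-level adjustment. Combined with the already-established invariance $\tilde\alpha_{02}=\alpha_{02}$ from Proposition~\ref{prop:Wchr} and the defining identities \eqref{eq:a1-1} and \eqref{eq:FE1}, this will yield both conclusions of the theorem. The first step is to assemble the $2$-jet of the coordinate change $(u,v)=(u(\xi,\eta),v(\xi,\eta))$ at the origin: by Proposition~\ref{prop:Wa} and Lemma~\ref{prop:c2},
\begin{equation*}
  u_\xi = \epsilon_1 \in \{\pm 1\},\quad u_\eta = u_{\xi\xi} = u_{\xi\eta} = u_{\eta\eta} = 0,\quad v_\xi = 0,\quad v_\eta = \epsilon_2 \in \{\pm 1\}
\end{equation*}
at the origin, whereas the second-order partials $v_{\xi\xi},v_{\xi\eta},v_{\eta\eta}$ and all third-order partials of $u$ and $v$ at the origin remain unconstrained.

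Next I would substitute the Taylor expansions of $u,v$ together with those of $E,F,G$ (which under adaptedness satisfy $E(0,0)=1$, $F(0,0)=G(0,0)=0$, and $dE=dF=dG=0$ at the origin) into the transformation formulas \eqref{eq:ee}, \eqref{eq:ff}, \eqref{eq:gg} and extract the relevant second partials at $(0,0)$. Because of the heavy vanishing pattern, most cross terms drop out and one is left with
\begin{align*}
  \tilde G_{\xi\eta}(0,0) &= \epsilon_1\epsilon_2\, G_{uv}(0,0),\\
  \tilde F_{\xi\eta}(0,0) &= F_{uv}(0,0) + \epsilon_1\, u_{\xi\eta\eta}(0,0),\\
  \tilde E_{\eta\eta}(0,0) &= E_{vv}(0,0) + 2\epsilon_1\, u_{\xi\eta\eta}(0,0).
\end{align*}

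The first identity combined with \eqref{eq:a1-1} yields $\tilde\alpha_{11}=\epsilon_1\epsilon_2\,\alpha_{11}$, so $|\alpha_{11}|$ is invariant; when $M^2$ is oriented and the change is orientation-preserving, the Jacobian determinant at the origin equals $\epsilon_1\epsilon_2=+1$ and $\alpha_{11}$ itself is invariant. The combination relevant to $\alpha_{20}$ enjoys an exact cancellation of $u_{\xi\eta\eta}(0,0)$:
\begin{equation*}
  \tilde F_{\xi\eta}(0,0) - \tfrac{1}{2}\tilde E_{\eta\eta}(0,0) = F_{uv}(0,0) - \tfrac{1}{2}E_{vv}(0,0),
\end{equation*}
so \eqref{eq:FE1} gives $\tilde\alpha_{20}=\alpha_{20}$. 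The main technical hurdle is the Leibniz-rule bookkeeping producing the three displayed identities, and in particular verifying that the otherwise unconstrained third-order coefficient $u_{\xi\eta\eta}(0,0)$ enters $\tilde F_{\xi\eta}(0,0)$ and $\tilde E_{\eta\eta}(0,0)$ in precisely the $1:2$ ratio needed for this cancellation. This cancellation is the geometric reason why the particular combination $F_{uv}-\frac{1}{2}E_{vv}$ — rather than either summand alone — defines a genuine intrinsic invariant at an intrinsic cross cap.
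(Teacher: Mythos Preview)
Your proof is correct and follows essentially the same approach as the paper: both use the transformation formulas \eqref{eq:ee}--\eqref{eq:gg} together with the jet constraints \eqref{eq:A} and \eqref{eq:A3}, and both isolate the identity $\tilde G_{\xi\eta}(0,0)=u_\xi(0,0)v_\eta(0,0)\,G_{uv}(0,0)$ as the key to the invariance of $\alpha_{11}$. Your version is in fact more explicit than the paper's, which leaves the computation for $\alpha_{20}$ to the reader; your observation that the unconstrained third-order coefficient $u_{\xi\eta\eta}(0,0)$ enters $\tilde F_{\xi\eta}$ and $\tilde E_{\eta\eta}$ in the precise $1{:}2$ ratio needed for cancellation is a genuine clarification of why the combination $F_{uv}-\tfrac12 E_{vv}$ is the correct intrinsic quantity.
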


\begin{proof}
The coordinate independence of
$\alpha_{20}$ and $|\alpha_{11}|$ are proved by using
\eqref{eq:ee}, \eqref{eq:ff}, \eqref{eq:gg}, 
\eqref{eq:A}, \eqref{eq:A3},
\eqref{eq:a1-1}
and \eqref{eq:FE1}.
Let $(\xi,\eta)$ be another
adapted coordinate system $(u,v)$
with second-level adjustment.
Then it holds that
$$
\tilde G_{\xi\eta}(0,0)
=u_\xi(0,0)v_\eta(0,0)G_{uv}(0,0).
$$
If $M^2$ is oriented, then
an orientation preserving coordinate change
between adapted coordinate systems 
with second-level adjustment
should satisfy
$$
u_\xi(0,0)v_\eta(0,0)=1.
$$
Thus $G_{uv}(0,0)$ is independent of
such a coordinate change, and 
the equality \eqref{eq:a1-1}
implies the desired coordinate invariance 
of $\alpha_{11}$.
\end{proof}

\begin{Cor}
Let $(u,v)$ be  an oriented
adapted coordinate system with second-level adjustment.
If we set
\[
   u=r\cos \theta,\qquad v=r\sin \theta,
\]
then the Gaussian curvature $K$ of $d\sigma^2$
satisfies
\begin{equation}\label{eq:Gauss_in}
\lim_{r \to 0}r^2K
=\frac{\alpha_{02}(\alpha_{20}\cos^2\theta -\alpha_{02}\sin^2 \theta)}
{\left(\cos^2\theta+(\alpha_{11}\cos \theta+\alpha_{02}\sin \theta)^2\right)^2}.\end{equation}
\end{Cor}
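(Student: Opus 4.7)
The plan is to substitute the West-type second-order normal form of $E,F,G$ from Definition~\ref{def:West} into the Brioschi formula \eqref{eq:egregium}, convert to polar coordinates $u=r\cos\theta$, $v=r\sin\theta$, and extract the limit as $r\to 0$. The denominator $g(\theta)^2$ in the target formula emerges from the structural identity
\[
EG-F^2=G+O(r^4)=r^2 g(\theta)+O(r^3),\qquad g(\theta):=\cos^2\theta+(\alpha_{11}\cos\theta+\alpha_{02}\sin\theta)^2,
\]
which holds because $E(0,0)=1$, $F(0,0)=G(0,0)=0$ and $F=O(r^2)$.

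First I would dispose of the $O(r^3)$ cubic-and-higher parts of $E,F,G$ left unspecified by Definition~\ref{def:West}: they produce $O(r)$ corrections to second partial derivatives and $O(r^2)$ corrections to products of first partials, which, divided by $EG-F^2$ or $(EG-F^2)^2$ and multiplied by $r^2$, leave residues of order $O(r)$. Hence for the purpose of the limit we may treat $E,F,G$ as exactly equal to the quadratic polynomials $1+E_2,F_2,G_2$ from Definition~\ref{def:West}.

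The last summand of \eqref{eq:egregium} is then immediate: direct evaluation at the origin gives $E_{vv}-2F_{uv}+G_{uu}=2(1+\alpha_{11}^2-\alpha_{02}\alpha_{20})$, contributing $-(1+\alpha_{11}^2-\alpha_{02}\alpha_{20})/g(\theta)$ to $\lim r^2 K$. Among the first three summands, the $F$- and $G$-coefficient pieces are $O(r^4)/O(r^4)\cdot r^2=O(r^2)$ and vanish in the limit; only the $E$-coefficient piece $E(E_vG_v-2F_uG_v+G_u^2)/(4(EG-F^2)^2)$ survives, producing an explicit trigonometric rational function with denominator $g(\theta)^2$ built from the linear parts of the first partials of $E,F,G$.

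The main obstacle is the algebraic simplification after combining the two contributions over the common denominator $g(\theta)^2$. Introducing the auxiliary quantity $p:=\alpha_{11}\cos\theta+\alpha_{02}\sin\theta$ streamlines the calculation: $g(\theta)=\cos^2\theta+p^2$, the linearized first partials of $F$ and $G$ are naturally expressed in terms of $p$ (one finds for instance $(E_v-2F_u)/r=-2\alpha_{20}p$ and $G_v/r=2\alpha_{02}p$), and the crucial identity $\alpha_{11}\cos\theta-p=-\alpha_{02}\sin\theta$ collapses the resulting numerator to $\alpha_{02}\alpha_{20}\cos^2\theta-(\alpha_{11}\cos\theta-p)^2=\alpha_{02}(\alpha_{20}\cos^2\theta-\alpha_{02}\sin^2\theta)$, yielding \eqref{eq:Gauss_in}. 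No further geometric input is required beyond the normal form already established.
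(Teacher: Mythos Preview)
Your computational strategy is sound and essentially matches the paper's, but there is a mismatch between your hypothesis and the statement you are proving. The Corollary is stated for an arbitrary \emph{adapted coordinate system with second-level adjustment}, not for a West-type coordinate system. In second-level adjusted coordinates the quadratic parts of $E$ and $F$ are not fully pinned down: by \eqref{eq:G1}--\eqref{eq:FE2} one only has
\[
  E=1+e_{20}u^2+2e_{11}uv+e_{02}v^2+O(3),\qquad
  F=\tfrac12(\alpha_{11}\alpha_{20}+e_{11})u^2+(e_{02}+\alpha_{02}\alpha_{20})uv+f_{02}v^2+O(3),
\]
with $e_{20},e_{11},e_{02},f_{02}$ free. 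The paper's proof works with this general expansion and checks that the limit of $r^2K$ is independent of these four constants. By plugging in the West-type normal form you are tacitly assuming the specific values $e_{20}=\alpha_{20}^2$, $e_{11}=\alpha_{11}\alpha_{20}$, $e_{02}=1+\alpha_{11}^2$, $f_{02}=\alpha_{02}\alpha_{11}$, so as written you only prove the formula in that special case.

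The good news is that your argument, carried out with the general expansion above, goes through with no extra work: the $F$- and $G$-numerator pieces of \eqref{eq:egregium} still vanish in the limit for the order-of-magnitude reasons you give; in the last summand the combination $E_{vv}-2F_{uv}=2e_{02}-2(e_{02}+\alpha_{02}\alpha_{20})=-2\alpha_{02}\alpha_{20}$ already kills the free constant; and in the surviving $E$-numerator piece the grouping you discovered, $(E_v-2F_u)=-2\alpha_{20}(\alpha_{11}u+\alpha_{02}v)=-2\alpha_{20}\,r\,p$, is precisely what makes $e_{11},e_{02}$ cancel, while $G_u,G_v,EG-F^2$ never involved the free constants to leading order. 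So the fix is simply to replace the opening sentence by the general second-level expansion and note, at each step, that the free constants drop out---which is exactly what the paper does.
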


\begin{proof}
By \eqref{eq:G1}, \eqref{eq:a1-1}, \eqref{eq:a1-2},
\eqref{eq:FE1} and \eqref{eq:FE2} 
we can write
 \begin{align*}
  E&=1+e_{20} u^2+2 e_{11}uv+e_{02}v^2
  +O(u,v)^3,\\
  F&=\frac12(\alpha_{11}\alpha_{20}+e_{11})u^2+
  (e_{02}+\alpha_{02}\alpha_{20})uv+f_{02}v^2
  +O(u,v)^3,\\
  G&=(1+(\alpha_{11})^2)u^2+2 \alpha_{02}\alpha_{11}uv+(\alpha_{02})^2 v^2
  +O(u,v)^3,
 \end{align*}
where $e_{20},e_{11},e_{02}$ and $f_{02}$ are
constants.
Substitute them into
the formula 
\eqref{eq:egregium}.
Then one can directly check that
the limit does not depend on the
constants
$e_{20},e_{11},e_{02}$ and $f_{02}$
and can get \eqref{eq:Gauss_in}.
\end{proof}

\begin{Rmk}
The formula \eqref{eq:Gauss_in} coincides with \cite[(20)]{HHNUY},
that is, the primary divergent term of 
the Gaussian curvature of
an intrinsic cross cap coincides with that of 
a cross cap in $\R^3$,
since the canonical coordinate system of 
a cross cap is
an adapted coordinate system  with 
second-level adjustment.
\end{Rmk}

\begin{proof}[Proof of Theorem \ref{thm:west}]
 Let $(u,v)$ 
 be an  adapted coordinate system adjusted in the second-level. 
 We define a new local  coordinate system $(\xi,\eta)$
 by
 \[
  u(\xi,\eta)=\xi+c_{30}
   \xi^3+c_{21}\xi^2\eta+c_{12}\xi\eta^2+c_{03}\eta^3, 
   \qquad
   v(\xi,\eta)=\eta
 \]
 and can adjust the above four coefficients.
 In fact, $c_{30}$ is adjusted to get 
$E_{uu}=(\alpha_{20})^2$,
 $c_{21}$ is adjusted for
 $F_{uu}=2\alpha_{11}\alpha_{20}$,
 $c_{21}$ is for
 $F_{uv}=1+(\alpha_{11})^2+\alpha_{02}\alpha_{11}$,
 and
 $c_{03}$ is for
 $F_{vv}=2\alpha_{02}\alpha_{11}$.
 Then $E_{uu}$ and $E_{uv}$ are determined by
 \eqref{eq:FE1} and \eqref{eq:FE2}.
\end{proof}

By the existence of a West type coordinate system,
we get the following assertion:

\begin{Cor}
The two invariants $a_{20}$ and $a_{11}$
of cross caps in $\R^3$ can be extended
as corresponding invariants 
$\alpha_{20}$ and $\alpha_{11}$
for
intrinsic cross caps, respectively.
\end{Cor}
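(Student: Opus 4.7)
The plan is to combine the three main results already established in the section: Theorem~\ref{thm:west} (existence of a West type coordinate system of the second order at every intrinsic cross cap), the invariance theorem (which shows that the numbers $\alpha_{20}$ and $|\alpha_{11}|$ extracted from such a coordinate system depend only on the germ of the Whitney metric, and that $\alpha_{11}$ itself becomes a well-defined invariant once an orientation on $M^2$ is fixed), and the preceding Proposition that identifies the canonical coordinate system of a cross cap in $\R^3$ as a West type coordinate system of the second order with $\alpha_{20}=a_{20}$ and $\alpha_{11}=a_{11}$.

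Concretely, I would proceed as follows. First, given any intrinsic cross cap $p$ of a Whitney metric $d\sigma^2$, Theorem~\ref{thm:west} furnishes a West type coordinate system $(u,v)$ of the second order centered at $p$, which by Definition~\ref{def:West} yields real numbers $\alpha_{20}$ and $\alpha_{11}$. Second, the invariance theorem guarantees that these are well-defined numerical invariants of the metric germ at $p$ (with $|\alpha_{11}|$ being invariant in general and $\alpha_{11}$ itself invariant when $M^2$ is oriented and only orientation-preserving changes of coordinates are allowed). Third, when $d\sigma^2$ is the pull-back of the Euclidean metric of $\R^3$ by a cross cap $f:M^2\to\R^3$, Corollary~\ref{cor:delta} ensures $p$ is an intrinsic cross cap, and the preceding Proposition shows that a canonical coordinate system of $f$ (from the West normal form \eqref{eq:cross}) is a West type coordinate system of the second order with coefficients $\alpha_{20}=a_{20}$ and $\alpha_{11}=a_{11}$. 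Thus the intrinsic invariants $\alpha_{20}$, $\alpha_{11}$ agree with the extrinsic coefficients $a_{20}$, $a_{11}$ in this case.

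There is essentially no serious obstacle here, since all the hard work has been done in the preceding lemmas and propositions; the corollary is a synthesis. The only point that deserves a brief mention is that the invariants on the Whitney-metric side are defined via the intrinsic quantity $\alpha_{02}$ from \eqref{eq:a02def}, and Proposition~\ref{prop:Wchr} already identifies this $\alpha_{02}$ with the extrinsic $a_{02}$ when $d\sigma^2$ comes from a cross cap in $\R^3$; this is what allows the defining equations \eqref{eq:a1-1}, \eqref{eq:FE1} to match the West normalization of $f$ term-by-term and produce the same numerical values for $\alpha_{20}$ and $\alpha_{11}$ as those of the extrinsic $a_{20}$ and $a_{11}$.
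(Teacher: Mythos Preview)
Your proposal is correct and matches the paper's approach: the paper gives no explicit proof at all, merely prefacing the corollary with the phrase ``By the existence of a West type coordinate system, we get the following assertion,'' so it treats the result as an immediate synthesis of Theorem~\ref{thm:west}, the invariance theorem, and the proposition identifying the canonical coordinate system with a West type coordinate system of the second order. Your write-up simply makes this synthesis explicit.
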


\begin{Rmk}
Let $(0,0)$ be an intrinsic cross cap
singularity of the metric
$d\sigma^2=Edu^2+2Fdudv+Gdv^2$ on
an oriented coordinate neighborhood $(U;u,v)$.
We set
$$
dA:=\sqrt{EG-F^2}du\wedge dv.
$$
By \eqref{eq:Gauss_in}, $KdA$ 
gives a
smooth 2-form with respect to the
polar coordinate system $(r,\theta)$,
where $u=r \cos \theta$ and
$v=r \sin \theta$.
Thus, the integral
$\int_U KdA$ is well-defined.
\end{Rmk}

\section{Gauss-Bonnet formula for Whitney metrics}
\label{sec:WGB}
At the end of this paper, we shall prove the following
Gauss-Bonnet type formula for Whitney metrics:

\begin{Thm}
 Let $M^2$ be a compact oriented manifold without boundary,
 and $d\sigma^2$ a Whitney metric in $M^2$.
 Then its Gaussian curvature $K$ satisfies
 \[
    \frac{1}{2\pi}\int_{M^2} K dA=\chi(M^2),
 \]
 that is, there is no defect at intrinsic cross cap
 singularities for the Gauss-Bonnet formula.
\end{Thm}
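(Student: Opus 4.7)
The plan is the standard ``puncture and shrink'' approach, combined with a local analysis near each intrinsic cross cap. By Proposition~\ref{prop:hessian} the singular set of the Whitney metric $d\sigma^2$ is discrete, hence by compactness of $M^2$ it consists of finitely many points $p_1,\dots,p_n$. At each $p_i$ fix a West-type coordinate system of second order (Theorem~\ref{thm:west}) and let $D_\epsilon^{(i)}:=\{u^2+v^2<\epsilon^2\}$. On the compact Riemannian surface with boundary $M_\epsilon:=M^2\setminus\bigcup_i D_\epsilon^{(i)}$ the metric is smooth and positive definite, so the classical Gauss--Bonnet theorem yields
\[
\int_{M_\epsilon}K\,dA+\sum_i\oint_{\partial D_\epsilon^{(i)}}\kappa_g\,ds=2\pi\chi(M_\epsilon)=2\pi\bigl(\chi(M^2)-n\bigr),
\]
where each boundary circle carries the orientation of $\partial M_\epsilon$ (clockwise in the West coordinates, opposite to the standard orientation of $C_\epsilon:=\partial D_\epsilon^{(i)}$). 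The theorem will follow once we show, as $\epsilon\to 0$, that $\int_{M_\epsilon}K\,dA\to\int_{M^2}K\,dA$ and $\oint_{C_\epsilon}\kappa_g\,ds\to 2\pi$ (standard orientation) at each $p_i$.

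Convergence of the area integral follows by dominated convergence: formula \eqref{eq:Gauss_in} gives $r^2K=O(1)$ near each $p_i$, while $EG-F^2=r^2 g(\theta)+O(r^3)$ in West coordinates, with $g(\theta):=\cos^2\theta+(\alpha_{11}\cos\theta+\alpha_{02}\sin\theta)^2>0$ (using $\alpha_{02}>0$). Hence $dA=\sqrt{EG-F^2}\,r\,dr\,d\theta=O(r^2)\,dr\,d\theta$ and $K\,dA$ is bounded in polar coordinates.

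For the boundary integral, work in the West coordinates $(u,v)$ at $p_i$ with $u=r\cos\theta$, $v=r\sin\theta$, and use the orthonormal frame $(\vect e_1,\vect e_2)$ of \eqref{eq:orthonormal}, which is well-defined on the punctured disk since $E(0,0)=1$ and $\lambda>0$ off $p_i$. Decompose $\kappa_g\,ds=d\psi+\omega$, where $\psi$ is the angle of $\gamma'$ in this frame and $\omega$ is the connection $1$-form \eqref{eq:omega}. By Lemma~\ref{lem:K-conn} and admissibility, $\hat\Gamma_{p_i}(X,Y,\partial_v)=0$ for all $X,Y\in T_{p_i}M^2$; since $\vect e_1(p_i)=\partial_u$ and $\tilde{\vect e}_2(p_i)=\partial_v\in\N_{p_i}$, the smooth functions $\Gamma(\partial_u,\vect e_1,\tilde{\vect e}_2)$ and $\Gamma(\partial_v,\vect e_1,\tilde{\vect e}_2)$ vanish at $p_i$ and are therefore $O(r)$ near $p_i$. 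Combined with $\lambda\ge c\,r$ for some $c>0$ (because $g(\theta)$ is bounded below), the coefficients of $\omega=-(\Gamma(\partial_u,\vect e_1,\tilde{\vect e}_2)/\lambda)du-(\Gamma(\partial_v,\vect e_1,\tilde{\vect e}_2)/\lambda)dv$ are bounded near $p_i$. Pulling back to $C_\epsilon$, where $du,dv=O(\epsilon)\,d\theta$, one obtains $\oint_{C_\epsilon}\omega=O(\epsilon)\to 0$.

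The main obstacle is the computation of $\Delta\psi=\oint_{C_\epsilon}d\psi$. A direct calculation using Definition~\ref{def:West} gives $\gamma'=a\vect e_1+b\vect e_2$ with
\[
a=-\epsilon\sin\theta+O(\epsilon^3),\qquad b=\epsilon^2\cos\theta\sqrt{g(\theta)}+O(\epsilon^3),
\]
so $(a,b)$ traces a small lens-shaped loop of size $O(\epsilon)\times O(\epsilon^2)$. Checking at $\theta=0,\pi/2,\pi,3\pi/2$ shows that $(a,b)$ lies on the $+\vect e_2$, $-\vect e_1$, $-\vect e_2$, $+\vect e_1$ axes respectively, so as $\theta$ runs once through $[0,2\pi]$ the vector $(a,b)$ winds exactly once counterclockwise and $\Delta\psi=2\pi$. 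Hence $\oint_{C_\epsilon}\kappa_g\,ds=\Delta\psi+\oint_{C_\epsilon}\omega\to 2\pi$, contributing $-2\pi n$ to the boundary sum of $\partial M_\epsilon$, and the main identity collapses to $\int_{M^2}K\,dA-2\pi n=2\pi(\chi(M^2)-n)$, which is the theorem. As a structural alternative, $\Delta\psi/(2\pi)$ is an integer-valued continuous function of the intrinsic invariants $(\alpha_{20},\alpha_{11},\alpha_{02})\in\R^2\times(0,\infty)$, hence constant on this connected parameter space, equal to its value $1$ computed explicitly for the standard cross cap $\alpha_{20}=\alpha_{11}=0$, $\alpha_{02}=2$.
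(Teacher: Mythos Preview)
Your argument is correct and reaches the same conclusion, but by a genuinely different local mechanism than the paper's.

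The paper does not shrink the boundary circles directly. Instead, at each intrinsic cross cap it fixes a disk $D(r)$ of \emph{fixed} radius, slices it by the lines $v=\pm\epsilon$ into two half-moons $D_\pm(r,\epsilon)$, applies the classical Gauss--Bonnet theorem to each half, and then lets $\epsilon\to 0$. The crucial computation is that the geodesic curvature integrals along the two horizontal slits $L_\pm(\epsilon)$ cancel in the limit, which the paper verifies by an explicit formula for $\kappa_g\,ds$ along $v=\mathrm{const}$ in West coordinates. Summing the two halves yields the key local identity $\int_{D(r)}K\,dA=2\pi+\int_{\partial D(r)}\kappa_g\,ds$, and this is glued to the standard Gauss--Bonnet formula on $M^2\setminus\bigcup D(r)^{(i)}$.

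Your route is more frame-theoretic: you shrink the circles themselves and split $\kappa_g\,ds$ into the angle increment $d\psi$ of $\gamma'$ in the orthonormal frame $(\vect e_1,\vect e_2)$ and the connection form $\omega$. The vanishing of $\oint_{C_\epsilon}\omega$ is obtained cleanly from admissibility via Lemma~\ref{lem:K-conn} (the numerators $\Gamma(\partial_u,\vect e_1,\tilde{\vect e}_2)$, $\Gamma(\partial_v,\vect e_1,\tilde{\vect e}_2)$ vanish at $p_i$, hence are $O(r)$, while $\lambda\ge c\,r$), and the remaining contribution is a topological winding number, which you identify as $+1$. (Minor remark: with the sign convention of \eqref{eq:omega} one has $\kappa_g\,ds=d\psi-\omega$ rather than $d\psi+\omega$, but since $\oint_{C_\epsilon}\omega\to 0$ this does not affect the conclusion.) This approach makes the role of admissibility transparent and avoids the explicit slit computation; the paper's approach, by contrast, is more elementary in that it never introduces a moving frame or a connection form, relying only on a direct formula for $\kappa_g$ along coordinate lines. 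Your continuity argument in the last sentence---that the winding number is locally constant in $(\alpha_{20},\alpha_{11},\alpha_{02})$---is a nice structural alternative to the four-point check.
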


For compact surfaces which admit only cross cap
singularities
in $\R^3$, the corresponding  formula
was shown in Kuiper \cite{K}. 

\begin{proof}
 We fix a singular point $p$ of $d\sigma^2$,
 and take an oriented West type coordinate system of the second order 
 $(U;u,v)$ at $p$.
 Using the formula \eqref{eq:Gauss_in},
 one can see that
 $K\,dA$ is a smooth $2$-form on $U$.
 We set
 \[
 D(r):=
 \{(u,v)\in U\,;\, u^2+v^2\le r^2\}
 \]
 and 
 \[
 D_+(r,\epsilon):=D(r)\cap\{v\ge\epsilon\},\qquad
 D_-(r,\epsilon):=D(r)\cap\{v\le -\epsilon\},
 \]
 where $\epsilon$ is a sufficiently small positive constant.
 Then we have that
 \begin{equation}\label{eq:F1}
  \int_{D(r)}K\,dA
   =
   \lim_{\epsilon\to 0} \int_{D_+(r,\epsilon)}K\,dA
   +\lim_{\epsilon\to 0} \int_{D_-(r,\epsilon)}K\,dA.
 \end{equation}
 Moreover, the classical Gauss-Bonnet formula yields that
 \begin{equation}
 \label{eq:F2}
  \int_{D_\pm(r,\epsilon)}K\,dA=
  \int_{\partial D_\pm(r,\epsilon)} \kappa_g\, ds+\pi,
 \end{equation}
 and so
 \[
 \int_{D(r)}KdA
 =
 2\pi+
 \lim_{\epsilon\to 0} 
 \int_{\partial D_+(r,\epsilon)} \kappa_g\, ds
 +\lim_{\epsilon\to 0} 
 \int_{\partial D_-(r,\epsilon)} \kappa_g\, ds
 \]
 holds.
 We set
 \[
 L_+(\epsilon):=\partial D_+(r,\epsilon)\setminus \partial D(r),\qquad
 L_-(\epsilon):=\partial D_-(r,\epsilon)\setminus \partial D(r).
 \]
 Then they are line segments parallel to the $u$-axis.
 Moreover, we may assume that
 the orientation of $L_+(\epsilon)$ is the same as 
 that of the $u$-axis,
 and the orientation of $L_-(\epsilon)$ is opposite
 that of $L_+(\epsilon)$. 
 These orientations are compatible with
 respect to the anti-clockwise orientations
 of $\partial D_\pm(r)$.
 We now compute the geodesic curvature of $L_+(\epsilon)$
 for each fixed sufficiently small $\epsilon(>0)$:
 \[
 \kappa_g\, ds=
 \frac{\langle \nabla_u \dot \gamma^{\epsilon}(u),
\vect{n}(u,\epsilon)\rangle du}{%
  |\dot \gamma^{\epsilon}(u)|^2}
 =\frac{\Gamma(\partial_u,\partial_u, \vect{n})du}{|\dot \gamma^{\epsilon}(u)|^2},
 \]
where $\gamma^\epsilon(u):=(u,\epsilon)$ ($|u|<\sqrt{r^2-\epsilon^2}$)
and
$\vect{n}(u,\epsilon)$ is
 the co-normal vector field along $\gamma^\epsilon$.
 If we set $d\sigma^2=E\,du^2+2F\,du\,dv+G\,dv^2$, we have 
 \[
  \vect{n}(u,\epsilon)=\frac{-F\partial_u+E\partial_v}{\sqrt{E(EG-F^2)}}
 \]
 and by a straightforward calculation, we have
 \[
    \inner{\nabla_u \dot \gamma^\epsilon(u)}{\vect{n}(u,\epsilon)}
     =
      \frac{E(2F_u-E_v)-E_uF}{2\sqrt{E(EG-F^2)}}.
 \]
 Since $|\dot \gamma^{\epsilon}(u)|^2=E(u,\epsilon)$, we get
 \[
    \kappa_g\, ds=\frac{(E(2F_u-E_v)-E_uF)du}{2\sqrt{E^3(EG-F^2)}}.
 \]
 By setting,
 $u=r \cos \theta$,
 $v=r \sin \theta$,
 it holds that
 \[
   \kappa_g\, ds
=\frac{du}{E^{3/2}}\frac{\alpha_{20}(\alpha_{11}\cos\theta+\alpha_{02}\sin\theta)
+r O_1(r,\theta)}{\sqrt{(1+(\alpha_{11})^2)\cos^2\theta+2 \alpha_{02}\alpha_{11}\cos\theta \sin \theta+
(\alpha_{02})^2\sin^2 \theta+r O_2(r,\theta)}},
 \]
 where  $O_i(r,\theta)$ ($i=1,2$) are 
 $C^\infty$-functions of $r,\theta$. 
 Since the light-hand side is bounded,
 we can show that
 \begin{equation}\label{eq:lim1}
  \lim_{\epsilon\to 0}\int_{L_+(\epsilon)}\kappa_g\, ds
   =\int_{L_+(0)}\kappa_g\, ds.
 \end{equation}
 Since $\lim_{\epsilon\to 0}\kappa_g\, ds$ 
is not continuous at $u=0$ as a $1$-form on the
$u$-axis,
 the integrals should be taken to be Lebesgue integrals.
 Similarly, we have
 \begin{equation}\label{eq:lim2}
  \lim_{\epsilon\to 0}\int_{L_-(\epsilon)}\kappa_g\, ds
   =\int_{L_-(0)}\kappa_g ds=-\int_{L_+(0)}\kappa_g\, ds.
 \end{equation}
 Thus, we get
 \begin{equation}\label{eq:lim0}
  \int_{D(r)}K\,dA
   =
   2\pi+
   \int_{\partial D} \kappa_g\, ds.
 \end{equation}
 The relations \eqref{eq:lim0},
 \eqref{eq:F1} and  \eqref{eq:F2}
 imply the assertion.
\end{proof}

It is well-known that even numbers of cross caps
appear in closed surfaces in $\R^3$ which admit only
cross cap singularities.
In the previous section, we have shown that
the number of $A_3$-points is even under the assumption
that the Kossowski metric is co-orientable.
However, the above Gauss-Bonnet formula does not
give any such restriction of the number of intrinsic cross caps.
In fact, one can construct a Whitney metric
on a torus having only one intrinsic cross cap
as follows:
The $C^\infty$-map $f(u,v)=(u,uv,v^2)$ 
has a cross cap singularity at the origin,
and its first fundamental form is given by
\[
ds^2:=\left(1+v^2\right)\,du^2 +2\,uv\, du\,dv+\left(u^2+4v^2\right)\,dv^2.
\]
Let $\rho:\R\to [0,1]$ be a $C^\infty$-function
such that $\rho(t)=1$ for $|t|\le 1/4$ and
$\rho(t)=0$ for $|t|\ge 3/4$.
We set
\[
  d\sigma^2:=\rho(2r) ds^2+(1-\rho(2r))(du^2+dv^2)
    \qquad \left(r:=\sqrt{u^2+v^2}\right).
\]
Then $d\sigma^2$ 
has a singular point only on $(0,0)$, and 
is a Whitney metric having 
an intrinsic cross cap at $(0,0)$,
which is defined on the square-shaped closed
domain 
$\overline D:=\{(u,v)\in \R^2\,;\, -1\le u,v\le 1\}$.
Identifying each of two pairs of the parallel edges of
the boundary of $\bar D$, the 
metric $d\sigma^2$ can be considered as a
Whitney metric on the square torus having
only one cross cap singularity.

\appendix
\section{Wave fronts, cuspidal edges
and swallowtails}

Let $M^2$ be a $2$-manifold and 
$f:M^2\to \R^3$ a $C^\infty$-map.
A point $p\in M^2$ is called {\it regular} if
$f$ is an immersion on a sufficiently small neighborhood
of $p$, and is called {\it singular} if it is not regular. 
A $C^\infty$-map $f:M^2\to \R^3$ is called a 
{\it frontal}
 if for each $p\in M^2$ there exists a
unit normal vector field $\nu$ along $f$
defined on
a neighborhood $U_p$ of $p$.
By parallel displacements in $\R^3$, 
$\nu$ can be
considered as a map $\nu:U_p\to S^2$.
In this case, $\nu$ is called the {\it Gauss map}
of the frontal $f$.
Moreover, if the map
$$
L:=(f,\nu):U_p\to \R^3\times S^2
$$
gives an immersion for each $p\in M^2$, 
$f$ is called a {\it front} or a {\it wave front}.
Using the canonical inner product,
we identify the unit tangent bundle
$
T_1\R^3=\R^3\times S^2
$
with the unit cotangent bundle $T_1^*\R^3$, which has
the canonical contact structure.
When $f$ is a front, $L$ gives a Legendrian immersion.

Let $f:M^2\to \R^3$ be a frontal.
Then $f$ is called {\it co-orientable}
if there exists a smooth unit normal vector
field $\nu$ globally defined on $M^2$. 
We fix a singular point $p\in M^2$ of $f$
and take a local coordinate system $(U_p;u,v)$
of $p$. The function on $U_p$ defined by
$$
\lambda:=\det(f_u,f_v,\nu)
$$
is called a {\it signed area density function}.
The set $\{(u,v)\in U_p\,;\, \lambda(u,v)=0\}$
coincides with the singular set of $f$
on $U_p$. 
A singular point $p$ is called {\it non-degenerate} if
the exterior derivative $d\lambda$ of $\lambda$
does not vanish at $p$.
(This definition does not depend on the choice of
local coordinate systems at $p$.)
On a neighborhood of a non-degenerate singular point,
the singular set consists of a regular curve
$\gamma(t)$, called the 
{\it singular curve}.
The tangential direction of the singular curve is
called the {\it singular direction}, and 
the direction of the kernel of
$df$ is called the {\em  null direction}.
Let $\eta(t)$ be the smooth (non-vanishing)
vector field along the singular curve
which gives the null direction.

Here, we give examples:
A singular point is called a 
{\em cuspidal edge\/} or a {\em swallowtail\/} 
if the corresponding germ of $C^\infty$-map is 
right-left equivalent to that of $C^\infty$-map
\begin{equation}\tag{1}\label{eq:cuspidal-swallow}
 f_C(u,v):=(u^2,u^3,v) \quad\text{or}\quad
 f_S(u,v):=(3u^4+u^2v,4u^3+2uv,v)
\end{equation}
at $(u,v)=(0,0)$, respectively.
Here, two $C^{\infty}$-maps $f\colon{}(U,p)\to \R^3$ and 
$g\colon{}(V,q)\to \R^3$ are {\it right-left equivalent}
 at the
points $p\in U$ and $q\in V$ if there 
exists a local diffeomorphism $\varphi$ of $\R^2$ with $\varphi(p)=q$ 
and a local diffeomorphism $\Psi$ of $\R^3$ with 
$\Psi(f(p))=g(q)$
such that $g=\Psi\circ f \circ \varphi^{-1}$.
It can be easily checked that both of $f_C$ and $f_S$
are fronts whose singular sets are all non-degenerate. 
These two types of singular points characterize 
the generic singularities of wave fronts.
The singular curve of $f_C$ is the $v$-axis and
the null direction is the $u$-direction.
The singular curve of $f_S$ is the parabola $6u^2+v=0$ and
the null direction is the $u$-direction.
The following criteria are known:

\begin{Fact}[\cite{KRSUY}]\label{fact:krsuy}
Let $f:M^2\to \R^3$ be a front.
Let $p\in M^2$ be a non-degenerate singular point,
and $\gamma(t)$ the singular curve
of $f$ such that $\gamma(0)=p$. Then
\begin{enumerate}
\item 
$p$
is a cuspidal edge
if and only if  
the null direction $\eta(0)$ is transversal to
the singular direction $\dot\gamma(0)$,
\item 
$p$
is a swallowtail if and only if  
the null direction $\eta(0)$ is proportional to
the singular direction $\dot\gamma(0)$,
and satisfies
$$
\left.\frac{d}{dt}\right|_{t=0}
\det(\dot\gamma(t), \eta(t))\ne 0.
$$
\end{enumerate}
\end{Fact}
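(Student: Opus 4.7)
The plan is to reduce each criterion to a local normal form calculation, exploiting that $f$ being a front forces the Legendrian lift $L=(f,\nu)\colon U\to \R^3\times S^2$ to be an immersion. Non-degeneracy ($d\lambda\neq 0$ at $p$) implies that the singular set is locally a regular curve $\gamma$, that $df|_p$ has rank exactly one, and that the null line $\langle \eta(0)\rangle\subset T_pM^2$ is well-defined. For the forward (``only if'') directions in both (1) and (2), the two defining conditions are clearly preserved by a right-left equivalence, so one only has to check them on the models $f_C$ and $f_S$: for $f_C$ the singular set is $\{u=0\}$ with $\dot\gamma(0)=\partial_v$ and null direction $\partial_u$ (transversal); for $f_S$ the singular curve is $\{6u^2+v=0\}$ with $\dot\gamma(t)=\partial_v-12t\,\partial_u$ and $\eta(t)=\partial_u$, so $\det(\dot\gamma,\eta)=12t$, which is parallel at $t=0$ and has a non-zero first derivative there.

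For the backward direction of (1), assume $\eta(0)$ is transverse to $\dot\gamma(0)$. Straighten coordinates so that the singular set is $\{u=0\}$ and so that the smooth null vector field along $\gamma$ is exactly $\partial_u$; transversality makes this possible by the implicit function theorem. Then $f_u(0,v)\equiv 0$, and Hadamard's lemma yields $f(u,v)=f(0,v)+u^2 A(u,v)$ for some smooth $\R^3$-valued $A$. That $L$ is an immersion implies the rank-$5$ condition on $dL$, which forces $A(0,0)$ to be transverse to $\operatorname{span}\{f_v(0,0)\}$ and forces a further third-order Taylor term transverse to both. A standard right-left reduction (of the type used in the KRSUY criterion) then brings $f$ onto the cuspidal edge normal form $(u^2,u^3,v)$.

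For the backward direction of (2), the parallelism $\eta(0)\parallel\dot\gamma(0)$ causes $df(\dot\gamma(0))=0$, so the edge curve $f\circ\gamma$ acquires a cusp at $f(p)$; the non-degeneracy condition $\tfrac{d}{dt}\det(\dot\gamma,\eta)|_{t=0}\neq 0$ guarantees that the null and singular directions separate linearly as $t$ moves away from $0$, so that on either side of $p$ the singular point is a cuspidal edge. Using the Legendrian immersion $L$ to lift the map and then applying the reduction along $\gamma$ as in case (1) in a parameter-dependent way, one produces coordinates in which $f$ takes the form of a one-parameter unfolding of $f_C$ degenerating to a cusp-on-cusp at $p$; this unfolding is Mather-stable and its normal form is precisely the swallowtail $f_S$.

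The main obstacle is case (2): unlike (1), the Taylor expansion of $f$ near $p$ does not directly reveal the swallowtail structure, because the null and singular directions coincide at $p$ and one must track how they split to first order. Handling this requires either an explicit construction of right-left diffeomorphisms guided by the discriminant of the Legendrian lift, or an appeal to the stability of the $A_3$ Legendrian singularity; either route relies on the non-degeneracy condition on $\det(\dot\gamma,\eta)$ to supply exactly the transversality that promotes the cuspidal-edge model to the swallowtail model.
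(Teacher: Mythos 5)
First, a structural remark: the paper itself does not prove this statement. It is labelled a \emph{Fact} and quoted from \cite{KRSUY}, so there is no internal proof to compare yours against; the relevant benchmark is the argument in that reference, which indeed proceeds by reduction to the normal forms $f_C$ and $f_S$ via the Legendrian lift, so your overall strategy is the intended one. Your treatment of the ``only if'' directions (invariance of the two conditions under right--left equivalence plus direct computation on the models) is sound, apart from a slip in the swallowtail computation: parametrizing the singular curve $\{6u^2+v=0\}$ of $f_S$ by $u=t$ gives $\dot\gamma(t)=\partial_u-12t\,\partial_v$, not $\partial_v-12t\,\partial_u$ (the curve cannot be smoothly parametrized by $v$ near the origin); the determinant is still $12t$, so the conclusion stands. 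You should also record, rather than merely assert, that the derivative condition in (2) is invariant under reparametrization of $\gamma$, rescaling of $\eta$, and right--left equivalence -- this is true because $\det(\dot\gamma,\eta)$ changes only by a nonvanishing factor, but it is a step.

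The genuine gaps are in the ``if'' directions. In (1), the two transversality conclusions are attributed to the wrong hypotheses: writing $f=f(0,v)+u^2A(u,v)$, the independence of $A(0,0)=\tfrac12 f_{uu}(0,0)$ from $f_v(0,0)$ follows from the non-degeneracy $d\lambda\neq 0$ (since $\lambda_u(0,0)=2\det(A,f_v,\nu)$ while $\lambda_v$ vanishes identically along the singular curve), and the third-order condition $f_{uuu}(0,0)\cdot\nu\neq 0$ follows from the front condition $\nu_u(0,0)\neq 0$ by differentiating $f_u\cdot\nu=0$ twice; neither is a direct consequence of ``the rank-$5$ condition on $dL$'' as stated. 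Moreover the closing ``standard right--left reduction'' -- that a $v$-parametrized family of plane curves, each with an $A_2$ cusp at $u=0$, can be trivialized to $(u^2,u^3,v)$ -- is itself a nontrivial lemma that must be proved or precisely cited. More seriously, in (2) your argument restates the goal rather than proving it: the assertion that the resulting one-parameter degeneration of cuspidal edges ``is Mather-stable and its normal form is precisely the swallowtail'' is exactly the content of the criterion. What has to be shown is that the parallelism of $\eta(0)$ and $\dot\gamma(0)$ together with $\frac{d}{dt}\det(\dot\gamma,\eta)|_{t=0}\neq0$ forces a generating family of the Legendrian lift to have a versally unfolded $A_3$ point (equivalently, that the Legendrian map-germ is stable), after which uniqueness of stable Legendrian singularities yields right--left equivalence to $f_S$. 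That translation is the heart of the theorem and is not carried out, so part (2) of the ``if'' direction remains unproven in your sketch.
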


A {\it cuspidal cross cap} is a singular point 
which is right-left equivalent to the 
$C^\infty$-map 
\begin{equation}\label{eq:ccr2}
  f^{}_{CCR}(u,v):= (u,v^2,uv^3),
\end{equation}
which is not a front but a frontal
with unit normal vector field
\[
  \nu^{}_{CCR}:=\frac{1}{\sqrt{4+9u^2v^2+4v^6}}
                          (-2v^3,-3uv,2).
\]
Using (1) of Fact \ref{fact:krsuy},
one can easily check that all of singular points of
$f^{}_{CCR}$ except for the origin
consist of cuspidal edges.

\section*{Acknowledgments}
The authors are grateful to 
Wayne Rossman and the referee
for valuable comments.

\end{document}